\newtheorem{thm}{Theorem}
\newtheorem*{thm*}{Theorem}
\newtheorem{lemma}[thm]{Lemma}
\newtheorem{corollary}[thm]{Corollary}
\newtheorem{prop}[thm]{Proposition}
\newtheorem*{prop*}{Proposition}
\theoremstyle{definition}
\newtheorem{defn}{Definition}
\newtheorem*{remark}{Remark}
\numberwithin{thm}{section}
\newcommand{\Z}{\mathbb{Z}}
\newcommand{\R}{\mathbb{R}}
\newcommand{\Q}{\mathbb{Q}}
\newcommand{\SL}{\textrm{SL}}
\newcommand{\GL}{\textrm{GL}}
\newcommand{\Spec}{\mathrm{Spec}}
\title{Bass note spectra of Binary forms}
\author{Giorgos Kotsovolis}
\address{Department of Mathematics, Princeton University, Princeton, NJ 08540}
\email{gk13@princeton.edu}
\date{\today}
\begin{document}

\maketitle

\begin{abstract}
  We show that the spectrum of every $\R-$isotropic homogeneous binary form $P$ of degree $n\geq\nolinebreak3$ is an interval of the form $[0,M_P],$ where $M_P$ is some positive constant. This completes the discussion around a conjecture of Mordell from 1940 (disproved by Davenport) regarding the existence of spectral gaps for binary cubic forms and further settles Mahler's program for binary forms of every degree.  
\end{abstract}
\tableofcontents
\section{Introduction}
Let $P\in\R\left[x_1,x_2,...,x_k\right]$ denote a homogeneous polynomial of degree $n\in\mathbb{N}$.
\begin{itemize}
    \item For a lattice $\Lambda$ in $\R^k$, what is the smallest value that $\vert P(v)\vert$ assumes as $v$ ranges over all non-trivial vectors of $\Lambda$? 
    \item What is the distribution of these smallest values as $\Lambda$ ranges over all $k-$dimensional unimodular lattices?
    \end{itemize}By posing these fundamental questions, Mahler (\cite{Mahler},\cite{mahlerII}) laid the foundations of a general theory of $k-$dimensional star bodies and their extremal lattices, which allows for a systematic approach to classical number-theoretical and dynamical problems (see Sarnak's Chern Lectures \cite{Sarnak}). However, although Mahler's program of understanding $$\Spec(P):=\left\{\inf_{\underline{x} \in\Lambda \setminus \underline{0}}\left\vert P(\underline{x})\right\vert, \Lambda \subset \R^k \text{ a unimodular lattice} \right\}$$ offers a unified framework for many questions in the Geometry of Numbers, the nature of this spectrum as we vary the form $P$ is diverse enough, so that one can not hope to give an answer in full generality. In this paper we address these questions for the case of real homogeneous binary forms; polynomials of the form $$P(x,y)=\sum_{i\geq0}^{n}c_ix^iy^{n-i},$$ for some $n\in\mathbb{N}$ and $c_i\in\R$ for all $i\in\{1,2,...,n\}$. \\ \\ To a binary form $P$ we can associate the differential operator $$T_P=P\left(\frac{\partial}{\partial x},\frac{\partial}{\partial y}\right)$$ acting on $C^{\infty}_0\left(\R^2/\Lambda\right)$ for every two dimensional unimodular lattice $\Lambda$. The smallest in absolute value eigenvalue of $T_P$ is $$\vert\lambda_1(\R^2/\Lambda)\vert=4\pi^2\inf_{\underline{x} \in\Lambda \setminus \underline{0}}\left\vert P(\underline{x})\right\vert,$$ and we see that there is a correspondence between the spectra of binary forms and the $\lambda_1$ eigenvalues of the operators $T_P$, when acting on tori of volume $1$. Under this viewpoint, Mahler's questions can be seen as instances of the Bass Note Spectra of differential operators on locally uniform geometries (see \cite{Sarnak}).  \\ \\A binary form $P$ will be called \textit{$\R$-anisotropic} if the only solution of the equation $$P(x,y)=0$$ is $x=y=0.$ If $P$ is not $\R$-anisotropic, we call it $\R$-\textit{isotropic}. If the form $P$ is $\R$-anisotropic, the intermediate value theorem implies that the spectrum is an interval and the question reduces to determining the extremal lattices, as shown by Mahler (see Section \ref{General Theory of Star Bodies}):
\begin{thm}[Mahler]\label{Positive definite}
    Let $P$ denote an $\R$-anisotropic homogeneous binary form of degree $n\geq 2$. Then $$\Spec(P)=(0,M_P],$$ for some $M_P>0.$
\end{thm}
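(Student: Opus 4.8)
The plan is to translate the problem into the language of star bodies and then invoke the intermediate value theorem. Associate to $P$ the star body $F_P=\{u\in\R^2:\ |P(u)|\le 1\}$. Since $P$ is $\R$-anisotropic it does not vanish on the unit circle, so $c:=\min_{|u|=1}|P(u)|>0$ and hence $|P(u)|\ge c|u|^n$ for all $u$; thus $F_P$ is bounded, symmetric, star-shaped with the origin in its interior, and $\mathrm{int}\,F_P=\{|P|<1\}$. For a unimodular lattice $\Lambda$ set $m(\Lambda)=\inf_{v\in\Lambda\setminus\{0\}}|P(v)|$. The homogeneity of $P$ gives, for $t>0$, that $t^{-1/n}\Lambda$ is admissible for $F_P$ (no nonzero vector in $\mathrm{int}\,F_P$) if and only if $|P(v)|\ge t$ for all $v\in\Lambda\setminus\{0\}$, i.e.\ if and only if $t\le m(\Lambda)$. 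In particular $m(\Lambda)>0$, the lattice $m(\Lambda)^{-1/n}\Lambda$ is admissible, and its covolume equals $m(\Lambda)^{-2/n}$.

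I would first identify the top of the spectrum. Let $\Delta=\Delta(F_P)=\inf\{\mathrm{covol}(\Gamma):\Gamma\text{ admissible for }F_P\}$ be the critical determinant; by the general theory of bounded star bodies (Section~\ref{General Theory of Star Bodies}) it is attained by a critical lattice $\Lambda_0$, which is admissible with $\mathrm{covol}(\Lambda_0)=\Delta$. For any unimodular $\Lambda$, the admissible lattice $m(\Lambda)^{-1/n}\Lambda$ has covolume $m(\Lambda)^{-2/n}\ge\Delta$, so $m(\Lambda)\le\Delta^{-n/2}$. Conversely, for $t=\Delta^{-n/2}$ the unimodular lattice $\Lambda_1:=\Delta^{-1/2}\Lambda_0$ satisfies $t^{-1/n}\Lambda_1=\Lambda_0$, which is admissible, so $m(\Lambda_1)\ge t$; hence $m(\Lambda_1)=\Delta^{-n/2}$. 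Therefore $M_P:=\sup_\Lambda m(\Lambda)=\Delta^{-n/2}$ is finite, positive, and lies in $\Spec(P)$, while $\Spec(P)\subseteq(0,M_P]$.

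It remains to fill in $(0,M_P)$, and for this the crucial ingredient is that $\Lambda\mapsto m(\Lambda)$ is continuous on the connected space $X_2=\SL_2(\R)/\SL_2(\Z)$ of unimodular lattices. Upper semicontinuity is immediate since $m$ is an infimum of the continuous maps $\Lambda\mapsto|P(v)|$; lower semicontinuity uses the boundedness of $F_P$: the bound $|P(v)|\ge c|v|^n$ together with the uniform bound $m(\Lambda)\le M_P$ confines the vectors relevant for computing $m(\Lambda)$ to a fixed ball, so that near any lattice $m$ is a minimum of finitely many continuously varying values; this continuity is what underlies the intermediate value theorem remark in the introduction. Granting continuity, fix $0<m<M_P$. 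Since $P(0,1)=c_0\ne 0$ by anisotropy, the unimodular lattices $\Lambda^{(s)}=\mathrm{diag}(s,s^{-1})\Z^2$ satisfy $m(\Lambda^{(s)})\le|P(0,s^{-1})|=|c_0|s^{-n}\to 0$ as $s\to\infty$, so there is a unimodular $\Lambda'$ with $m(\Lambda')<m$. As $X_2$ is path-connected, join $\Lambda_1$ to $\Lambda'$ by a path; along it the continuous function $m$ passes from $M_P>m$ down to $m(\Lambda')<m$ and hence attains the value $m$. Combined with $M_P\in\Spec(P)$ and $m(\Lambda)>0$ for every $\Lambda$, this yields $\Spec(P)=(0,M_P]$.

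The main obstacle is the lower semicontinuity of $\Lambda\mapsto m(\Lambda)$: the infimum defining $m(\Lambda)$ is \emph{a priori} over infinitely many lattice vectors, and one must show it is locally realized on a finite, continuously varying set. This is exactly where $\R$-anisotropy enters in an essential way — beyond forcing $n$ even and giving $P$ a definite sign — since it is what makes $F_P$ bounded; the remainder of the argument is formal manipulation of dilations, the critical determinant, and the connectedness of $X_2$.
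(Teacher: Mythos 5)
Your proposal is correct and follows essentially the same route as the paper: boundedness of the star body $\{|P|\le 1\}$ gives continuity and positivity of $\Lambda\mapsto\inf_{v\in\Lambda\setminus\{0\}}|P(v)|$ (the paper's Proposition \ref{Continuity}), Mahler's compactness gives attainment of the maximum (Proposition \ref{existence of extremal lattice}), and degenerating toward the cusp together with connectedness of $\SL_2(\R)/\SL_2(\Z)$ and the intermediate value theorem fills in $(0,M_P)$. Your explicit family $\mathrm{diag}(s,s^{-1})\Z^2$ and the identification $M_P=\Delta^{-n/2}$ are just concrete instantiations of what the paper leaves implicit.
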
 
 However, when the form $P$ is $\R-$isotropic, the nature of $\Spec(P)$ can be intricate. The spectrum of a binary quadratic form of discriminant $1$ is equal to the well studied Markoff spectrum, which we denote by $\mathcal{M}$. \begin{thm}
 For the Markoff spectrum $\mathcal{M}$ we have: \begin{itemize}
     \item (Markoff \cite{Markov}) Let $m$ denote an integer solution of the equation $$x^2+y^2+z^2=3xyz.$$ Then $\frac{m}{\sqrt{9m^2-4}}\in\mathcal{M}.$ Furthermore, these are all the points of the spectrum larger than $\frac{1}{3}.$ 
     \item (Hall \cite{Hall}) $\mathcal{M}$ contains an interval of the form $[0,c]$.
     \item (Freiman \cite{Freiman}) The largest value of $c$ is equal to the constant $$c_{max}=\left(\frac{2221564096 + 283748\sqrt{462}}{491993569}\right)^{-1}.$$ \end{itemize}  
 \end{thm}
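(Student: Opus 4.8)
The plan is to reduce all three assertions to the combinatorics of bi-infinite continued fractions. By the stated identification of the spectrum of a discriminant-$1$ binary quadratic form with $\mathcal M$ (which is $\SL_2(\R)$-invariant, so it suffices to look at $Q=xy$) together with the classical Markoff--Perron formula, one has
\[
\mathcal M=\{\,1/\lambda(\mathbf a)\ :\ \mathbf a\in\Z_{\geq1}^{\Z}\,\},
\]
where, for a bi-infinite sequence $\mathbf a=(a_n)_{n\in\Z}$ of positive integers,
\[
\lambda(\mathbf a):=\sup_{n\in\Z}\Big(a_n+[0;a_{n+1},a_{n+2},\dots]+[0;a_{n-1},a_{n-2},\dots]\Big),
\]
and every such $\mathbf a$ occurs. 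The three bullets are now statements about the functional $\lambda$.

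For the first bullet I would run Markoff's descent. If $\lambda(\mathbf a)<3$ then necessarily every $a_n\in\{1,2\}$ and $\mathbf a$ avoids a short list of forbidden finite blocks; the surviving bi-infinite words are exactly those read off the ternary tree of solutions of $x^2+y^2+z^2=3xyz$. By Vieta jumping one attaches to each such triple, with largest entry $m$, an (eventually) periodic word $\mathbf a$ with $\lambda(\mathbf a)=\sqrt{9m^2-4}/m$, i.e.\ $1/\lambda(\mathbf a)=m/\sqrt{9m^2-4}$; one checks these are the only words with $\lambda<3$ and that $m/\sqrt{9m^2-4}\downarrow\tfrac13$ as $m\to\infty$, so the part of $\mathcal M$ above $\tfrac13$ is exactly this discrete set.

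For the second bullet I would use Hall's lemma on sums of Cantor sets. Let $F_4\subset[0,1]$ be the set of reals all of whose partial quotients are $\leq4$; a covering argument on its defining cylinder intervals shows that $F_4+F_4$ contains a genuine interval $[\ell,L]$ with $L-\ell>1$. Given a large target $t$, write $t=k+s$ with $k\in\Z_{\geq0}$ and $s\in[\ell,L]$ (possible once $t$ is large, since consecutive intervals $k+[\ell,L]$ overlap), split $s=s_{+}+s_{-}$ with $s_{\pm}\in F_4$, and build $\mathbf a$ with $a_0=k$, $[0;a_1,a_2,\dots]=s_{+}$ and $[0;a_{-1},a_{-2},\dots]=s_{-}$; then $\lambda(\mathbf a)\geq t$, and with a little care one arranges equality, so every sufficiently large value is attained by $\lambda$. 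Hence $\mathcal M$ contains an interval $[0,c]$, the endpoint $0$ coming from sequences with $a_n\to\infty$.

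For the third bullet, $c_{\max}$ is the exact left endpoint of Hall's ray, and the plan is to optimise the previous construction: replace $F_4$ by the largest family of admissible sequences for which the relevant sumset still has no gap, compute algebraically the threshold at which a gap first appears, and then prove a matching upper bound by a finite but intricate case analysis showing that \emph{no} bi-infinite sequence $\mathbf a$ has $1/\lambda(\mathbf a)$ in the window just above $c_{\max}$. The explicit value $\left(\tfrac{2221564096+283748\sqrt{462}}{491993569}\right)^{-1}$ is the solution of the resulting extremal problem. I expect this upper bound to be the main obstacle: unlike the lower bound it must control \emph{every} bi-infinite sequence, not just the structured ones used to build the ray, which is precisely why Freiman's analysis is so much more delicate than Hall's, and I would reproduce his bookkeeping rather than look for a shortcut.
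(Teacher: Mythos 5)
The paper does not prove this theorem at all: it is stated as classical background, with the proofs delegated to the cited works of Markoff, Hall and Freiman. So there is no ``paper proof'' to compare against, and your proposal has to be judged on its own. On that basis it is a correct outline of the standard route (the one in, e.g., Cusick--Flahive): you correctly account for the paper's normalization, in which $\mathcal M$ consists of the \emph{reciprocals} $1/\lambda(\mathbf a)$ of the classical Markoff spectrum values, so that Markoff's discrete part $\sqrt{9m^2-4}/m<3$ becomes $m/\sqrt{9m^2-4}>\tfrac13$ and Hall's ray $[F,\infty)$ becomes $[0,1/F]$. The first bullet via descent on words in $\{1,2\}$ avoiding forbidden blocks and the Markoff tree, and the second via Hall's sumset lemma for $F_4$ (where $F_4+F_4\supseteq[\sqrt2-1,4(\sqrt2-1)]$ has length $>1$, and the supremum defining $\lambda$ is forced to sit at the index with the large partial quotient $k$ once $t>6$), are both sound. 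The only caveat is the third bullet, where your ``proof'' is really a statement of the two-sided extremal problem plus an undertaking to reproduce Freiman's case analysis for the upper bound (no admissible bi-infinite word lands in the gap just above $c_{\max}^{-1}$); that is an honest description of what the proof requires, but it is a roadmap rather than an argument, which is exactly why the paper, too, simply cites Freiman.
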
  
For degrees $n\geq3$ much less is known.  The case of binary cubic forms was investigated by Mordell (\cite{Mordell}) who determined the extremal lattices for the star body problem and consequently the maximum point of $\Spec(P).$ \begin{thm}[Mordell]
Let $P$ denote a binary cubic form with discriminant $D_P\neq 0$. Then, $$\max\Spec(P) = \begin{cases}
  \sqrt[4]{\frac{-D_P}{23}}, & \text{if } D_P<0, \\[10pt] \sqrt[4]{\frac{D_P}{49}}, & \text{if } D_P>0.
\end{cases}$$
\end{thm}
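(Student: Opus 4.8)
The plan is to reduce, via scaling- and $\GL_2(\R)$-invariance, to evaluating $\max\Spec$ on one binary cubic of each sign of the discriminant, to recast that evaluation as a critical-determinant problem in the geometry of numbers, and to identify the extremal lattices with the ring-of-integers lattices of the two cubic fields of smallest discriminant.

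\textbf{Reduction.} For $g\in\GL_2(\R)$ the substitution $P\mapsto P\circ g$ multiplies the discriminant by $(\det g)^{6}$, and since $g$ sends the unimodular lattices bijectively onto the lattices of covolume $|\det g|$, rescaling by $|\det g|^{1/2}$ and using homogeneity gives $\Spec(P\circ g)=|\det g|^{3/2}\,\Spec(P)$; likewise $D_{tP}=t^{4}D_{P}$ and $\Spec(tP)=|t|\,\Spec(P)$. Hence $(\max\Spec P)^{4}/|D_{P}|$ is unchanged under $P\mapsto P\circ g$ and $P\mapsto tP$, and the sign of $D_{P}$ is preserved. A cubic with $D_P\neq 0$ has three distinct roots in $\mathbb{P}^{1}(\C)$, all real if $D_P>0$ and exactly one real if $D_P<0$; since $\mathrm{PGL}_2(\R)$ is triply transitive on $\mathbb{P}^{1}(\R)$ and transitive on configurations $\{r,z,\bar z\}$ with $r$ real, $\GL_2(\R)$ acts transitively on each of the two classes. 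It therefore suffices to evaluate $\max\Spec$ on one representative $P_{+}$ with $D_{P_{+}}>0$ and one $P_{-}$ with $D_{P_{-}}<0$.

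\textbf{Reformulation and the extremal lattices.} For any lattice $L\subset\R^{2}$ set $\mu(L)=\inf_{v\in L\setminus 0}|P(v)|$; homogeneity gives $\mu(tL)=t^{3}\mu(L)$ and $\mathrm{covol}(tL)=t^{2}\mathrm{covol}(L)$, so $\mu(L)/\mathrm{covol}(L)^{3/2}\in\Spec(P)$ for all $L$ and $\max\Spec(P)=\sup_{L}\mu(L)/\mathrm{covol}(L)^{3/2}=\Delta(\mathcal{S}_{P})^{-3/2}$, where $\mathcal{S}_{P}=\{(x,y):|P(x,y)|<1\}$ and $\Delta(\mathcal{S}_{P})$ is its critical determinant. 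Although $\mathcal{S}_{P}$ is unbounded (it contains the zero lines of $P$), admissibility of $L$ forces $|v|\geq(\max_{|u|=1}|P(u)|)^{-1/3}$ on $L\setminus 0$, so by Mahler's compactness criterion $\Delta(\mathcal{S}_{P})$ is attained by a critical lattice. As representatives I would take $P_{\pm}(x,y)=N_{K_{\pm}/\Q}(x+\theta_{\pm}y)$, where $\theta_{+}=\zeta_{7}+\zeta_{7}^{-1}$ generates the totally real cubic field $K_{+}$ of discriminant $49$ and $\theta_{-}$ is a root of $t^{3}-t-1$, generating the complex cubic field $K_{-}$ of discriminant $-23$; these are the cubic fields of smallest discriminant, and $\theta_{\pm}$ generates $O_{K_{\pm}}$, so $D_{P_{\pm}}=D_{K_{\pm}}$. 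For $(x,y)\in\Z^{2}\setminus 0$ the value $P_{\pm}(x,y)$ is the norm of the nonzero algebraic integer $x+\theta_{\pm}y$, hence of absolute value $\geq 1$, with equality at $(1,0)$; thus $\mu(\Z^{2})=1$ and $\Delta(\mathcal{S}_{P_{\pm}})\leq 1$, which on undoing the reduction gives $\max\Spec(P)\geq\sqrt[4]{D_{P}/49}$ for all $D_P>0$ and $\max\Spec(P)\geq\sqrt[4]{-D_{P}/23}$ for all $D_P<0$.

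\textbf{The upper bound --- the main obstacle.} It remains to prove $\Delta(\mathcal{S}_{P_{\pm}})\geq 1$, i.e.\ that no lattice avoiding $\mathcal{S}_{P_{\pm}}$ has covolume $<1$; this is the crux, a sharp form of the Minkowski-type bounds that underlie the classification of cubic fields of smallest discriminant. I would prove it by reduction theory: normalize a critical lattice $L$ --- using the finite group of $\GL_2(\R)$-symmetries of $P_{\pm}$ and a reduced basis --- so that a short lattice point occupies a prescribed position near the boundary curve $|P_{\pm}|=1$, then run a finite case analysis on the positions of the next few lattice points relative to that boundary, forcing $\mathrm{covol}(L)\geq 1$ in each branch, the critical lattices being the images of $\Z+\Z\theta_{\pm}$ under the symmetries of $P_{\pm}$. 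The analysis splits by the sign of $D_P$: for $D_P>0$ the region $\{|P_{+}|<1\}$ is a three-pointed ``star'' bounded by three hyperbola-like arcs, and the argument is close in spirit to Davenport's determination of the minimum of a product of three homogeneous linear forms, where $\Q(\zeta_{7})$ likewise surfaces; for $D_P<0$ one has a line-plus-definite-quadratic region carrying only a $\Z/2$ worth of symmetry, and the bookkeeping is correspondingly more involved. Keeping this case analysis finite --- controlling how the arcs $|P_{\pm}|=1$ constrain lattice points both near and far from the zero lines of $P_{\pm}$, and excluding configurations that locally look admissible --- is the step I expect to be genuinely hard, and it is what produces the specific constants $49$ and $23$.
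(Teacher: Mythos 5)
Your reduction step is correct and is exactly the one the paper uses: the discriminant is the sole generator of the ring of $\SL_2$-invariants of binary cubics (the paper's Theorem \ref{Mordell Lemma} and its corollaries), so $(\max\Spec P)^4/\vert D_P\vert$ depends only on the sign of $D_P$, and it suffices to treat one representative of each sign. Your choice of representatives is also the right one: $\theta_+=\zeta_7+\zeta_7^{-1}$ satisfies $t^3+t^2-2t-1=0$, giving (up to a $\GL_2(\Z)$ change of variable) Mordell's form $x^3+x^2y-2xy^2-y^3$ of discriminant $49$, and $t^3-t-1$ gives $x^3-xy^2-y^3$ of discriminant $-23$. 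The lower bound $\mu(\Z^2)=1$ via integrality of norms is correct and is the same observation the paper records ($\inf_{\Z^2\setminus\underline{0}}\vert P\vert=1$ for both forms).

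However, the upper bound --- that every unimodular lattice contains a nonzero $v$ with $\vert P_{\pm}(v)\vert\leq 1$, equivalently that $\Z^2$ is extremal --- is the entire substance of the theorem, and you do not prove it. What you offer is a strategy (normalize a critical lattice using the symmetries of $P_{\pm}$, then a finite case analysis on the positions of nearby lattice points relative to the boundary $\vert P_{\pm}\vert=1$), and you explicitly flag its execution as the step you expect to be hard. That is indeed how Mordell's argument goes, but without carrying out the case analysis nothing forces the constants $23$ and $49$; a priori the critical determinant could be smaller than $1$ and the maximum of the spectrum larger. Note that the paper itself does not reprove this step either: it attributes it to Mordell's original paper and only uses the statement. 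So your proposal correctly reconstructs the framework and the easy half, but as a self-contained proof it has a genuine gap precisely at the admissibility bound $\Delta(\mathcal{S}_{P_{\pm}})\geq 1$, which is where all the work lies.
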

Observing the existence of a spectral gap already for the case of indefinite quadratic forms, Mordell further suggested that this maximum point is an isolated point of the spectrum, a conjecture disproved by Davenport in \cite{DavenportonMordell} by constructing a sequence of infima over unimodular lattices converging to the maximum. \\

In this work, we give a complete description of $\Spec(P)$ for all binary cubic forms:

\begin{thm}\label{D<0}
Let $P$ be a binary cubic form with $D_P<0$. Then, $$\Spec(P)=\left[0,\sqrt[4]{\frac{-D_P}{23}}\right].$$
\end{thm}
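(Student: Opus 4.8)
\section*{Proof proposal}

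The plan is to establish the two inclusions separately. Since Mordell's theorem already gives $\max\Spec(P)=\sqrt[4]{-D_P/23}$, it suffices to show that every value $t\in\left(0,\sqrt[4]{-D_P/23}\right)$ is realized as $\inf_{\underline{x}\in\Lambda\setminus\underline{0}}|P(\underline{x})|$ for some unimodular $\Lambda$; continuity considerations and the endpoint $0$ will then close the interval. The natural parametrization is to act on a fixed reference lattice by $\GL_2(\R)$: writing $\Lambda=g\Z^2$ with $\det g=1$, the quantity $N_P(g):=\inf_{\underline{x}\in\Z^2\setminus\underline{0}}|P(g\underline{x})|$ is a continuous, $\SL_2(\Z)$-invariant function on $\SL_2(\R)$, so the spectrum is the image of a connected space under a continuous map. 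The only thing that can obstruct the image from being a full interval down to $0$ is a failure of \emph{lower semicontinuity} combined with the infimum escaping to the cusp; so the crux is to produce, for each target value $t$ below the Mordell bound, an \emph{explicit} lattice or a compactness argument pinning the infimum at $t$.

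The key input from the cubic structure is the classification of the real $\GL_2$-orbit: a binary cubic with $D_P<0$ has one real linear factor and a complex-conjugate pair, so over $\R$ it is $\GL_2(\R)$-equivalent to a multiple of $L(x,y)\cdot Q(x,y)$ with $L$ linear and $Q$ positive definite quadratic; after scaling we may take the normal form $P_0(x,y)=x(x^2+y^2)$ up to the discriminant-fixing constant. The star body $|P_0(\underline{x})|\le 1$ is then an explicit region bounded by the curve $|x|(x^2+y^2)=1$, and Mordell's extremal lattice sits tangent to its boundary. First I would recall this normal form and reduce to studying $N_{P_0}$ on $\SL_2(\R)$. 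Then, following Davenport's mechanism, I would exhibit a one-parameter (or two-parameter) family of lattices $\Lambda_s$ — most likely built from the extremal lattice by a unipotent or diagonal deformation — along which $N_{P_0}(\Lambda_s)$ varies continuously and, crucially, for which the infimum is \emph{attained} (only finitely many lattice vectors compete), so that the value can be computed and shown to sweep out an interval $[t_0, \sqrt[4]{-D_P/23}]$ for some $t_0<\sqrt[4]{-D_P/23}$.

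To get all the way down to $0$, the plan is to combine this local family with a scaling/degeneration argument: composing with highly skew elements of $\SL_2(\R)$ (pushing toward the cusp) makes many lattice points have small $|P_0|$-value, driving $N_{P_0}$ toward $0$, and one checks the map stays continuous so that no gap opens. Concretely I expect to cover $\left(0,\sqrt[4]{-D_P/23}\right)$ by finitely many overlapping continuous arcs in $\SL_2(\R)$ — the Mordell-extremal neighborhood, Davenport-type deformations, and a cusp-going ray — and invoke connectedness of the union of their images. The endpoint $0$ itself is in the spectrum because $P_0$ vanishes on a rational line through the origin after a suitable rational change of variables, or as a limit: $\R$-isotropy means $\Spec(P)$ cannot have a positive lower bound, and closedness of $[0,M_P]$ then forces $0\in\Spec(P)$.

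The main obstacle I anticipate is the middle step: controlling the infimum along the deformation family, i.e.\ proving that as the lattice moves, no \emph{new} short vector sneaks in to push $N_{P_0}$ below the value contributed by the vectors one is tracking, and simultaneously that the tracked vectors' contributions genuinely decrease. This is exactly where Davenport's original argument against Mordell's isolation conjecture lived, and upgrading it from "the max is not isolated" to "the entire interval below the max is covered" requires a uniform, quantitative version — presumably a careful analysis of the geometry of the star body $|x|(x^2+y^2)\le c$ near its boundary, showing the relevant family of admissible lattices is rich enough. A secondary technical point is verifying lower semicontinuity (or its failure) precisely enough to rule out spurious gaps, which I would handle via a reduction-theory argument bounding how far into the cusp a near-extremal lattice can be.
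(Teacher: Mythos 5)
Your strategy has a genuine gap at its core: the claim that $N_{P}(\Lambda_s)$ ``varies continuously'' along a unipotent or diagonal deformation of the extremal lattice is false, and this is not the ``secondary technical point'' you defer to the end --- it is the central obstruction. For an $\R$-isotropic form the star body is unbounded, so $\Lambda\mapsto\inf_{\underline{x}\in\Lambda\setminus\underline{0}}|P(\underline{x})|$ is only \emph{upper} semicontinuous; worse, along any deformation that moves the real root of $P(z,1)=0$, the root passes through $\Q$ on a dense set of parameters, and at each such parameter the infimum is exactly $0$ (this is precisely the content of Proposition \ref{Non-existence of continuous curve}, whose argument needs only one real root to be displaced). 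So ``the spectrum is the image of a connected space under a continuous map'' is not available, and no nonconstant arc of lattices can have continuous infimum unless it fixes the real root. Your plan of covering $(0,M_P)$ by overlapping continuous arcs (Davenport-type deformations plus a cusp-going ray) therefore cannot be executed as stated: the Davenport mechanism only produces a sequence of values accumulating at the maximum, not an interval, and upgrading it is exactly the hard problem.

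What makes the $D_P<0$ case trivial --- and what your proposal misses --- is that one should deform \emph{only the positive definite quadratic factor while keeping the real linear factor fixed}. The paper takes $P_t(x,y)=(x-\rho y)\bigl((x+\tfrac{\rho}{2}y)^2+(\tfrac34\rho^2-1)(1+t^2)y^2\bigr)$ with $\rho$ the real root of $x^3-x-1$; then $|P_t|\ge|P_0|$ pointwise (so no new short vector can ever sneak in --- the difficulty you flag simply evaporates), $P_t(1,0)=1$ forces $m(P_t)=1$ for all $t$, and $|D_{P_t}|$ increases continuously from $23$ to $\infty$. Rescaling by $\sqrt[4]{|D|}$ via the single-$\SL_2(\R)$-orbit structure of cubics of a given discriminant (Corollary \ref{Duality}) converts ``constant infimum, growing discriminant'' into the full interval $(0,\sqrt[4]{-D_P/23}]$, and $0$ is attained by any lattice making the real root rational. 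In short: rather than varying the lattice against a fixed form and fighting discontinuity, one varies the form within the anisotropic direction at fixed infimum and lets the discriminant normalization do the work. Your outline, as written, is closer to what is actually needed for $D_P>0$, where no definite factor is available and the paper must resort to continued-fraction constructions.
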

Similarly:
\begin{thm}\label{D>0}

Let $P$ be a binary cubic form with $D_P>0$. Then, $$\Spec(P)=\left[0,\sqrt[4]{\frac{D_P}{49}}\right].$$
\end{thm}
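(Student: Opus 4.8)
The plan is to reduce to a single normal form and then prove the non-trivial inclusion $[0,\sqrt[4]{D_P/49}]\subseteq\Spec(P)$ by an intermediate-value argument; the opposite inclusion, together with attainment of the right endpoint, is Mordell's theorem quoted above, so this is the whole content. A binary cubic with $D_P>0$ splits over $\R$ into three distinct linear forms, and since $\GL_2(\R)$ acts transitively (up to scalars) on ordered triples of distinct points of $\mathbb{P}^1(\R)$, there are $h\in\GL_2(\R)$ and $\lambda\in\R\setminus\{0\}$ with $P=\lambda\cdot\bigl(xy(x+y)\bigr)\circ h$. Using $\Spec(\mu Q)=|\mu|\,\Spec(Q)$, $\Spec(Q\circ h)=|\det h|^{3/2}\,\Spec(Q)$, and that the discriminant is a relative invariant (homogeneous of degree $4$ in the coefficients, scaling by $(\det h)^{6}$ under $h$), one checks that $|\lambda|\,|\det h|^{3/2}=D_P^{1/4}$ since $xy(x+y)$ has discriminant $1$, whence $\Spec(P)=D_P^{1/4}\,\Spec\bigl(xy(x+y)\bigr)$. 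So it suffices to prove $\Spec(P_0)=[0,1/\sqrt 7]$ for $P_0:=xy(x+y)$. Set $f(\Lambda)=\inf_{v\in\Lambda\setminus 0}|P_0(v)|$; then $0\in\Spec(P_0)$ because any unimodular lattice meeting one of the lines $x=0$, $y=0$, $x+y=0$ has $f=0$, while $1/\sqrt 7=f(\Lambda^{*})$ for the explicit extremal lattice $\Lambda^{*}$ produced by Mordell.

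The core is then to produce a continuous path $t\mapsto\Lambda_t$ of unimodular lattices, $t\in[0,1]$, with $\Lambda_1=\Lambda^{*}$ and $\Lambda_0$ containing a point on a zero line of $P_0$, along which $f$ is continuous: since $f\le 1/\sqrt 7$ everywhere with $f(\Lambda_0)=0$ and $f(\Lambda_1)=1/\sqrt 7$, the intermediate value theorem yields $[0,1/\sqrt 7]\subseteq\Spec(P_0)$, and with Mordell's bound equality follows. Near $t=1$ the path is essentially finite-dimensional: $\Lambda^{*}$ has finitely many minimal vectors, so for nearby lattices $f$ equals the minimum of $|P_0|$ over the continuations of those vectors, which is continuous and can be strictly decreased; one follows this, enlarging the finite ``active set'' whenever another vector drops to the running minimum — a continuous-parameter refinement of Davenport's construction of infima accumulating at $\max\Spec(P_0)$. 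Near $t=0$ one instead designates a single vector $v_t$ with $P_0(v_t)\to 0$ and invokes $f(\Lambda_t)\le|P_0(v_t)|\to 0$ directly. It may be cleaner to split the interval, realising a bottom piece $(0,\mu_0]$ independently: for small $\mu$ pick $v$ with $|P_0(v)|=\mu$ and $|v|$ of moderate size, and complete $\Z v$ to a unimodular lattice by a second generator chosen in the positive-measure set of completions for which every remaining vector $mv+nw$ ($n\neq 0$) still has $|P_0|\ge\mu$; this forces $f=\mu$ and supplies the overlap needed to splice with the deformation near $\Lambda^{*}$.

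I expect the main obstacle to be the continuity of $f$. It is only upper semicontinuous, and although $\{f\ge\epsilon\}$ is compact for every $\epsilon>0$ (Mahler's criterion: $f(\Lambda)\ge\epsilon$ forces a lower bound $\gg\epsilon^{1/3}$ on the shortest vector, and $\{f\ge\epsilon\}$ is closed by upper semicontinuity), even there $f$ need not be continuous, because near-minimal vectors can escape to infinity along one of the three zero lines. Quantitatively, $|P_0(v)|$ stays bounded with $|v|\to\infty$ only if the relevant linear form is approximated by $\Lambda$ with exponent at least $2$ — a Liouville-type phenomenon, excluded by Dirichlet for a ``generic'' direction — so the path has to be kept in a region where the three linear forms are approximated by $\Lambda_t$ no better than Diophantine, uniformly for $t$ in compact subsets of $(0,1]$; here it helps that the coordinates of $\Lambda^{*}$ are algebraic, hence not Liouville, and the unavoidable loss of uniformity as $t\to 0$ is absorbed by the direct estimate $f(\Lambda_t)\to 0$. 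Ruling out this escape of minimal vectors, together with the bookkeeping of the active set along the path — genuinely more intricate than in Theorem~\ref{D<0}, where $P$ has a single zero line — should be the bulk of the work; the normal-form reduction and the perturbative analysis near $\Lambda^{*}$ are comparatively routine.
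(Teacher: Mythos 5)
Your reduction to a normal form and the identity $\Spec(P)=D_P^{1/4}\Spec(xy(x+y))$ are fine (this is the paper's Corollary \ref{Duality}), and you correctly identify continuity of $f$ as the crux. But the mechanism you then propose --- a continuous path $t\mapsto\Lambda_t$ from a lattice with $f=0$ to the extremal lattice along which $f$ is continuous, followed by the intermediate value theorem --- cannot be repaired: it is ruled out by Proposition \ref{Non-existence of continuous curve}. For a cubic with $D_P>0$ all three roots of $P(z,1)=0$ are real and distinct, and along any non-constant continuous path the continuously varying roots of $P\circ u(t)$ take rational values for a dense set of parameters $t$ in every interval on which the path is non-constant; at each such $t$ one has $f(\Lambda_t)=0$. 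Hence if $f\circ u$ were continuous it would vanish identically wherever $u$ is non-constant, and no path can join $f=0$ to $f=1/\sqrt{7}$ continuously. Your hope of confining the path to a region where the three linear forms are uniformly Diophantine with respect to $\Lambda_t$ is vacuous: the bad set is dense in the parameter, not a thin exceptional locus one can steer around, and the ``active set'' bookkeeping near $\Lambda^{*}$ does not survive these dense degenerations. This is precisely why the paper flags the $D_P>0$ case as structurally different from $D_P<0$, where the single real zero line can be screened off by the positive definite factor.

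The paper's actual route is discrete and measure-theoretic rather than topological. Density of $\Spec$ in $[0,\sqrt[4]{D_P/49}]$ is obtained by continued-fraction surgery: one replaces the largest root $\rho$ of Mordell's extremal form by $\rho(c,N)=[\alpha_0(\rho),\dots,\alpha_N(\rho),\lfloor c(\rho-\chi)(\rho-\psi)Q_N(\rho)\rfloor,1,1,\dots]$ and computes $m(P_{c,N})=\tfrac1c(1+o_N(1))$, the single inserted large partial quotient creating exactly one new near-minimal vector (the $N$-th convergent) while the tail of $1$'s and the Thue--Siegel--Roth bound prevent any other vector from competing. Passing from a dense set to the full interval is then done in Section \ref{Fixed point perturbations and intervals in the spectrum} via a three-parameter family $P_{\theta_1,\theta_2,\theta_3}$ and a generalized Steinhaus theorem (Lemma \ref{General Steinhaus}): for a set of parameters of proportion $1-o_N(1)$ the infimum is attained at one of three explicit convergent vectors, where it is a smooth function with nonvanishing partial derivatives, and the image of a nearly full-measure set under such a map contains an interval. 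If you want to salvage your write-up, you would need to replace the IVT step entirely by an argument of this kind; the normal-form reduction and the identification of the endpoints are the only parts that carry over.
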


We prove Theorems \ref{D<0} and \ref{D>0} by proving a stronger statement that the values of normalized infima of binary cubic forms fill the whole spectral interval, even when we restrict to lattices in an arbitrarily small neighborhood of the extremal lattice. \\

For a general higher degree form $P$, the spectrum is not always an interval. For example, for $P(x,y)=x^2y^2,$ we have $\Spec(P)=\left\{t^2\vert t\in \Spec(xy)\right\}$ which behaves similarly to the Markoff Spectrum. We can exclude simple counterexamples coming from spectra of lower degree forms by assuming $D_P \neq 0$. 
To the best of our knowledge, there has been no progress in understanding the nature of $\Spec(P)$ for $\R-$isotropic binary forms of higher degree. In this paper, we resolve this question for all binary forms of degree $n\geq 3$ and $D_P\neq 0$. Our main result is the following:

\begin{thm} \label{Main Theorem}
    Let $P(x,y)$ be an $\R-$isotropic binary form of degree $n\geq 3$ with non-zero discriminant. Then $\Spec(P)=[0,M_P]$, for some $M_P>0.$
\end{thm}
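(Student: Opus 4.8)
The plan is to show that $\Spec(P)$ is an interval containing $0$; once we know it is a closed interval $[0,M_P]$ the positivity of $M_P$ follows from a standard Minkowski-type volume bound (every unimodular lattice contains a nonzero vector in the region $\{|P|\le C_n\}$, which has finite volume since $n\ge 1$, so the infimum is bounded below by $0$ uniformly — and compactness of the relevant part of the space of lattices gives $M_P>0$). The two substantive points are therefore: (i) $0\in\Spec(P)$, i.e. there exist unimodular lattices on which $\inf_{\Lambda\setminus 0}|P|$ is arbitrarily small, and (ii) $\Spec(P)$ is connected.

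For (i), since $P$ is $\R$-isotropic there is a real line $\ell=\R v$ with $P\equiv 0$ on $\ell$. I would take a lattice of the form $\Lambda_t = g_t \Lambda_0$ where $g_t$ is a one-parameter diagonal (geodesic) flow whose expanding direction is $\ell$; then for the lattice point $w_t\in\Lambda_t$ closest to $\ell$ one gets $|P(w_t)|\to 0$ as $t\to\infty$ because $w_t$ is forced to be within distance $O(e^{-t})$ of the null line while staying at bounded Euclidean size is not required — rather one checks directly that the shortest vector in the contracting direction shrinks like $e^{-t}$ and $P$ vanishes to order $\ge 1$ there, whence $\inf_{\Lambda_t\setminus 0}|P|\le |P(w_t)|\to 0$. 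This is the degenerating-lattice / cusp excursion construction and is robust; the isotropy hypothesis (a real root of $P(x,1)$, equivalently $D_P$ allowing it, but here we simply assume $\R$-isotropy) is exactly what makes it work. The factorization $P(x,y)=c\prod_i(\beta_i x-\alpha_i y)$ over $\C$, with at least one real factor, makes the estimate explicit.

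For (ii), the idea is continuity combined with a "filling" argument. The function $\Lambda\mapsto m_P(\Lambda):=\inf_{x\in\Lambda\setminus 0}|P(x)|$ is continuous on the space $X_2=\SL_2(\R)/\SL_2(\Z)$ of unimodular lattices (upper semicontinuity is immediate; lower semicontinuity uses that the infimum is attained because $\{|P|\le M+1\}$ has compact intersection with any lattice of covolume $1$, $P$ having only the origin in common with arbitrarily thin tubes — again using finiteness of the volume of $\{|P|\le c\}$). Since $X_2$ is connected, $m_P(X_2)$ is an interval; its supremum is $M_P=\max\Spec(P)$ (attained by a standard compactness argument — the sublevel sets of $m_P$ where $m_P\ge \epsilon$ are compact in $X_2$, cf. Mahler's theory in Section \ref{General Theory of Star Bodies}) and its infimum is $0$ by (i). Hence $\Spec(P)=m_P(X_2)=[0,M_P]$.

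The main obstacle is not connectedness — that is soft — but rather confirming that the degenerating construction in (i) really produces lattices with $m_P\to 0$ \emph{and} that these lattices are genuinely unimodular with no "hidden" short vector keeping $m_P$ away from $0$; i.e. one must check that along the degeneration the only small values of $|P|$ come from the null direction and are not accidentally cancelled, and that no other lattice vector sneaks in with a comparably small or smaller $|P|$-value that is nonetheless bounded below. Concretely the delicate estimate is: for $\Lambda_t$ the shortest vector $u_t$ has $\|u_t\|\asymp e^{-t}$ along $\ell$ and every other primitive vector $w$ with $\|w\|\le R$ satisfies $|P(w)|\gtrsim$ (positive constant depending on $R$), so that $m_P(\Lambda_t)=|P(u_t)|\to 0$. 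Making this uniform — and, as the paper promises for the cubic case, localizing the whole argument to an arbitrarily small neighbourhood of the extremal lattice so as to simultaneously recover $[0,M_P]$ rather than just $[0,M_P)$ — is where the real work lies; for the general-degree statement above, however, only the cruder global version is needed, and the factorization of $P$ into linear forms over $\C$ reduces every estimate to elementary inequalities.
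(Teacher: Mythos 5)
Your step (i) ($0\in\Spec(P)$) and the attainment of the maximum $M_P$ are fine and match the paper's use of Mahler's compactness (Proposition \ref{existence of extremal lattice}). But step (ii) — connectedness via continuity of $\Lambda\mapsto m_P(\Lambda)$ on $\SL_2(\R)/\SL_2(\Z)$ — is wrong, and it is the entire content of the theorem. For an $\R$-isotropic form the sublevel set $\{|P|\le c\}$ is \emph{unbounded} (it contains thin horns along each real null line), so its intersection with a unimodular lattice is not compact, the infimum need not be attained, and lower semicontinuity fails. The paper proves only upper semicontinuity in general, reserves continuity for the anisotropic (bounded star body) case in Proposition \ref{Continuity}, and in Proposition \ref{Non-existence of continuous curve} proves the exact opposite of your claim: along any non-constant continuous path $u(t)$ of lattices, $t\mapsto m(P\circ u(t))$ is discontinuous, because the real roots of $P\circ u(t)$ sweep through rational values on a dense set of parameters $t$, forcing $m=0$ there while $m>0$ on a full-measure set. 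A decisive sanity check is the indefinite quadratic case: $\SL_2(\R)/\SL_2(\Z)$ is just as connected there, yet the Markoff spectrum has isolated points above $1/3$ and is not an interval; so no argument relying only on connectedness of the space of lattices plus "continuity" can be correct.

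Because continuity is unavailable, the paper's actual proof is a long Diophantine construction: it shows one can move the extremal lattice so that the real roots of $P\circ T$ acquire controlled continued-fraction expansions (the sets $B_\epsilon(\rho)$, $E^\eta(\rho)$, the Structural Theorem \ref{Positive Proportion}, and the algorithm of Proposition \ref{algorithm}), then uses diagonal perturbations $\Delta_\theta$ along the interval $I_N$ between consecutive convergents of $\rho_1$ to sweep out values $\theta^{-n/2}|(\tfrac{P_N}{Q_N}-\theta\rho_1)u|$, first achieving full measure in $[0,M_P]$ (Theorem \ref{Density theorem}) and finally full intervals via fixed-point perturbations along the curves $\Sigma_{N,\theta,u}$ together with a generalized Steinhaus theorem. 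Your proposal contains none of this and cannot be repaired by sharpening the continuity claim, since that claim is provably false; the "real work" you defer to the estimate on short vectors is not where the difficulty lies.
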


It follows that the case of indefinite binary quadratic forms is the only one where the spectrum is not an interval. Interestingly, for a homogeneous binary form $P$, the body $\{\underline{x}\in\R^2: \vert P(\underline{x})\vert\leq1\}$ is of finite volume if and only if $P$ is not quadratic indefinite. Although this fact will not be used explicitly in our proof, it seems to be the underlying reason for this phenomenon. \\

The organization of this paper is as follows:
\begin{itemize}

    \item In Section \ref{General Theory of Star Bodies}, we give an overview of Mahler's general theory of star bodies and prove existence theorems for extremal lattices. 
    \item In Section \ref{Preliminaries on Continued Fractions}, we review some well known facts regarding continued fractions, that we will be using freely throughout the paper.
    \item In Section \ref{The star body problem for binary cubic forms}, we address binary cubic forms. In subsection \ref{The case of negative discriminant} we give a short proof of Theorem \ref{D<0}. The proof of Theorem \ref{D>0} is substantially more convoluted. In subsection \ref{The case of positive discriminant} we prove that the spectrum of a binary cubic form of positive discriminant is dense inside the corresponding spectral interval.
    \item  In Section \ref{The sets}, we introduce and discuss the properties of certain Diophantine sets. 
    \item In Section \ref{The structural theorem and density of approximant lattices}, we prove that for any sufficiently thick set $X\subset\R^k$ and any binary form $P$ of degree $n\geq 3$ of signature $(k,n-k)$ and non-vanishing discriminant, we can find a form $P'$ in the same $\SL_2(\R)$-orbit such that the real roots of $P'(z,1)=0$ lie in $X$ and the infima of $P,P'$ on $\Z^2\backslash\underline{0}$ are arbitrarily close. This fact is already enough to deduce that the spectrum of binary forms has no isolated maximum for degrees $n\geq3.$
    \item In Section \ref{The Measure of Spec(P)}, we generate spectra with measure arbitrarily close to maximal using thin neighborhoods of diagonal perturbations and prove that $\Spec(P)$ is an interval of the form $[0,M_P]$ modulo a set of measure zero. 
\item In Section \ref{Fixed point perturbations and intervals in the spectrum}, we show how to pass from full measure to full interval by using fixed point perturbations inside certain curves of $\SL_2(\R)$, hence concluding the proof of Theorem \ref{Main Theorem}.
\end{itemize}
\section{General Theory of Star Bodies}\label{General Theory of Star Bodies}
Suppose $F$ is a non-negative continuous function on $\R^k$ and assume further that $F$ is homogeneous of degree $n$: $$F(t\underline{x})=\vert t\vert^nF(\underline{x}).$$

The \textit{star body} associated with $F$ is the set equal to $K=\{\underline{x}\in\R^k:\,F(\underline{x})\leq1\}$ and we call a lattice $\Lambda$ of $\R^k$ $K-$admissible if the only $v\in\Lambda\cap K$ is the trivial vector with all coordinates equal to zero. By definition, the determinant of the star body $K$ is equal to: $$\Delta(K)=\inf\{\sqrt[k]{\left\vert\det(\Lambda)\right\vert}:\text{  $\Lambda$ is a $K-$admissible lattice}\}$$  or equivalently  $$\Delta^{-1}(K)=\sup_{\Lambda\text{ unimodular}}\inf_{\underline{x}\in\Lambda\backslash\underline{0}}\vert F(\underline{x})\vert=\sup (\Spec(F)).$$

For a given $F$, the existence of $K-$admissible lattices is not guaranteed and by convention we set $\Delta(K)=\infty$ if no $K-$admissible lattices exist. From now on, we consider only star bodies of finite type ($\Delta(K)<\infty$). An extremal lattice for the star body $K$ is a $K-$admissible lattice $\Lambda$ such that $$\det(\Lambda)=\Delta^k(K).$$ As a consequence of his celebrated compactness theorem, Mahler proved the existence of extremal lattices for all star bodies of finite type:
\begin{prop} \label{existence of extremal lattice}
    Let $F:\R^k\rightarrow\R$ some non-negative continuous function, homogeneous of degree $n$. The function \begin{align*}
&\overline{F}:\SL_k(\R)/\SL_k(\Z)\rightarrow \R, \\&\overline{F}(\Lambda)=\inf_{\underline{x}\in\Lambda\backslash\underline{0}}\vert F(\underline{x})\vert\end{align*} is upper semi-continuous. As a consequence, extremal lattices exist.
\end{prop}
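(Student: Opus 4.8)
The plan is to deduce Proposition~\ref{existence of extremal lattice} from Mahler's compactness criterion for $\SL_k(\R)/\SL_k(\Z)$, together with a soft semicontinuity argument. First I would establish that $\overline{F}$ is upper semi-continuous. Fix a lattice $\Lambda_0$ and a value $c > \overline{F}(\Lambda_0)$; by definition of the infimum there is a nonzero $v_0 \in \Lambda_0$ with $|F(v_0)| < c$. Since $\SL_k(\R)$ acts continuously on $\R^k$ and a short nonzero lattice vector survives under small perturbations (one can realize nearby lattices as $g\Lambda_0$ for $g$ close to the identity, so the vector $g v_0$ is nonzero, lies in $g\Lambda_0$, and depends continuously on $g$), continuity of $F$ gives $|F(g v_0)| < c$ for all $g$ in a neighborhood of the identity. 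Hence $\overline{F}(g\Lambda_0) \le |F(g v_0)| < c$ on that neighborhood, which is exactly upper semi-continuity at $\Lambda_0$. A minor point to handle carefully is that the chart $g \mapsto g\Lambda_0$ only covers a neighborhood of $\Lambda_0$ in $\SL_k(\R)/\SL_k(\Z)$, which is fine for a local statement.

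Next I would produce the extremal lattice by a compactness argument. Recall that $\Delta^{-1}(K) = \sup_{\Lambda}\overline{F}(\Lambda)$ and that we are assuming $\Delta(K) < \infty$, i.e.\ the supremum $s := \Delta^{-1}(K)$ is a positive real number (it is positive because star bodies of finite type admit admissible lattices, whose small dilates give lattices with $\overline{F}$ bounded below). Take a sequence $\Lambda_j$ of unimodular lattices with $\overline{F}(\Lambda_j) \to s$. For $j$ large, $\overline{F}(\Lambda_j) \ge s/2 > 0$, which means every nonzero vector $v \in \Lambda_j$ satisfies $|F(v)| \ge s/2$; by homogeneity and continuity of $F$ this forces a uniform lower bound on the length of the shortest nonzero vector of $\Lambda_j$, i.e.\ the systole of $\Lambda_j$ is bounded below independently of $j$. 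By Mahler's compactness criterion, the set of unimodular lattices with systole bounded below is compact in $\SL_k(\R)/\SL_k(\Z)$, so after passing to a subsequence $\Lambda_j \to \Lambda_\infty$ for some unimodular lattice $\Lambda_\infty$.

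Finally, upper semi-continuity gives $\overline{F}(\Lambda_\infty) \ge \limsup_j \overline{F}(\Lambda_j) = s$, while $\overline{F}(\Lambda_\infty) \le s$ trivially since $s$ is the supremum; therefore $\overline{F}(\Lambda_\infty) = s = \Delta^{-1}(K)$. Rescaling $\Lambda_\infty$ to have determinant $\Delta^k(K)$ (equivalently, reading the statement in its unimodular normalization) produces a $K$-admissible lattice attaining the extremal determinant, which is the desired extremal lattice. The only real subtlety — and the step I expect to be the crux — is verifying the systole lower bound that feeds Mahler compactness: one must rule out that the near-optimal lattices $\Lambda_j$ develop arbitrarily short vectors, and this is precisely where the hypothesis $\overline{F}(\Lambda_j)$ bounded away from $0$ combined with the homogeneity $F(t\underline{x}) = |t|^n F(\underline{x})$ is used, since a short vector $v$ would yield an even shorter vector $\lambda v$ with $|F(\lambda v)| = |\lambda|^n |F(v)|$ small, contradicting the bound. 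Everything else is soft point-set topology.
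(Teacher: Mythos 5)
Your proof is correct and follows essentially the same route as the paper: upper semi-continuity via the persistence of a near-minimizing lattice vector under small perturbations, and existence of extremal lattices by forcing a maximizing sequence away from the cusp using homogeneity of $F$. If anything, your invocation of Mahler compactness with an explicit systole lower bound is slightly more careful than the paper's terse ``bounded and upper semi-continuous, hence assumes its supremum,'' which implicitly relies on the same decay of $\overline{F}$ at the cusp.
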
 
\begin{proof}
Let $\Lambda_i\rightarrow\Lambda_0$ denote a converging sequence of $\SL_k(\R)/\SL_k(\Z).$ We want to show that $\overline{F}(\Lambda_0)\geq\limsup_i(\overline{F}(\Lambda_i)).$  By definition of $\overline{F}$, if $\Lambda_i$ is a sequence in $\SL_k(\R)/\SL_k(\Z)$ converging to the cusp, the shortest vector of $\Lambda_i$ converges to $0$ and by continuity of $F$, $$\overline{F}(\Lambda_i)\rightarrow0.$$ Hence, the function $\overline{F}$ is bounded and we can assume that $\overline{F}(\Lambda_i)$ is convergent. Denote this limit by $l$ and assume that $\overline{F}(\Lambda_0)< l$. Then we can find some vector $u\in\Z^k\backslash\underline{0},$ such that $\left\vert F\circ\Lambda_0(u)\right\vert<l$. Fix some $\epsilon$ smaller than $l-\left\vert F\circ\Lambda_0(u)\right\vert$. Then, for $i$ large enough we have $$\left\vert F\circ\Lambda_i(u)\right\vert<l-\epsilon$$ and hence $\overline{F}(\Lambda_i)<l-\epsilon,$ which is a contradiction. We have shown that $\overline{F}$ is bounded and semi-upper continuous and hence assumes its supremum.
\end{proof}

We thus see that the maximum of the spectrum of some form $F$ can be realized. A $K-$admissible lattice $\Lambda$ is called \textit{finitely minimized} if there exists $u\in\Lambda\backslash{0}$ such that $$\overline{F}(\Lambda)=F(u).$$ If the star body $K$ is bounded then every $K-$admissible lattice is trivially finitely minimized, but we can find unbounded star bodies of finite type without $K-$admissible extremal lattices. In fact, if the star body $K_F$ associated to the form $F$ is bounded then we can say significantly more:
\begin{prop}\label{Continuity}
Let $F$ denote an non-negative continuous function on $\R^k$, homogeneous of degree $n$, such that the star body $K_F$ associated to the form $F$ is bounded. Then the function \begin{align*}
&\overline{F}:\SL_k(\R)/\SL_k(\Z)\rightarrow \R, \\&\overline{F}(\Lambda)=\inf_{\underline{x}\in\Lambda\backslash\underline{0}}\vert F(\underline{x})\vert\end{align*} is continuous. Furthermore, if the origin is the only solution of the equation $$F(\underline{x})=0,$$ then $\overline{F}$ is everywhere positive.
\end{prop}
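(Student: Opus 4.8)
The plan is to upgrade the upper semi-continuity of $\overline{F}$ established in Proposition \ref{existence of extremal lattice} to full continuity, using the boundedness of $K_F$ to rule out the escape of minimizing vectors to infinity. First I would observe that, since $K_F$ is bounded, there is a radius $R>0$ with $K_F\subset B(0,R)$; because $F$ is homogeneous of degree $n$, this says $F(\underline{x})\geq (\|\underline{x}\|/R)^n$ for all $\underline{x}$. Consequently, for any unimodular lattice $\Lambda$ and any minimizing candidate, only lattice vectors of norm at most $R\cdot\overline{F}(\Lambda)^{1/n}$ can possibly realize the infimum; combined with Minkowski's theorem (which bounds $\overline{F}(\Lambda)$ away from $0$ in terms of $R$ alone, giving the last sentence of the statement once $F(\underline{x})=0$ forces $\underline{x}=0$), this confines all relevant vectors to a fixed compact set as $\Lambda$ varies near a given $\Lambda_0$.

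Next I would prove lower semi-continuity. Let $\Lambda_i\to\Lambda_0$ in $\SL_k(\R)/\SL_k(\Z)$; lifting to $\SL_k(\R)$, pick representatives $g_i\to g_0$. For each $i$ choose $u_i\in\Z^k\setminus\underline{0}$ with $\overline{F}(\Lambda_i)=F(g_iu_i)$ (possible since $K_F$ bounded implies every admissible lattice is finitely minimized, as already noted in the text, and the same compactness argument shows the infimum is attained for every unimodular lattice). By the confinement from the previous paragraph, $\{g_iu_i\}$ lies in a fixed bounded set, and since $g_i\to g_0$ with $g_i^{-1}$ also converging, the integer vectors $u_i$ themselves lie in a fixed bounded set, hence take only finitely many values. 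Passing to a subsequence realizing $\liminf_i\overline{F}(\Lambda_i)$, we may assume $u_i\equiv u$ is constant; then $\overline{F}(\Lambda_i)=F(g_iu)\to F(g_0u)\geq\overline{F}(\Lambda_0)$. Thus $\liminf_i\overline{F}(\Lambda_i)\geq\overline{F}(\Lambda_0)$, which together with the upper semi-continuity from Proposition \ref{existence of extremal lattice} gives continuity.

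Finally, for positivity: if $F(\underline{x})=0$ has only the trivial solution, then $\overline{F}(\Lambda)>0$ for every unimodular $\Lambda$, since the infimum is attained at some nonzero lattice vector $u$ and $F(u)>0$.

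The only real subtlety I anticipate is making the "confinement to a compact set" argument uniform as $\Lambda$ ranges over a neighborhood of $\Lambda_0$ — one must be careful that the bound $R\cdot\overline{F}(\Lambda)^{1/n}$ on the norm of a minimizing vector is itself bounded locally, which follows because $\overline{F}$ is already known to be bounded above (indeed upper semi-continuous) on all of $\SL_k(\R)/\SL_k(\Z)$. With that in hand, the reduction to finitely many integer vectors $u_i$ is immediate and the rest is continuity of $g\mapsto F(gu)$ for fixed $u$.
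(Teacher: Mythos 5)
Your argument is correct and is essentially the paper's: boundedness of $K_F$ forces $F(\underline{x})\geq(\|\underline{x}\|/R)^n$, so the infimum over any unimodular lattice is attained among the finitely many lattice points of a fixed compact ball, and continuity (your lower semi-continuity combined with the upper semi-continuity of Proposition \ref{existence of extremal lattice}) and positivity then follow by passing to a constant minimizing integer vector along a subsequence. The only blemish is the parenthetical asserting that Minkowski's theorem bounds $\overline{F}(\Lambda)$ \emph{away from} $0$ — it bounds it from \emph{above}, which is in fact what your confinement step uses; positivity comes, as you correctly state in your final paragraph, from the attained infimum being the value of $F$ at a nonzero vector.
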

\begin{proof}
By definition, for any divergent sequence $\underline{x}_i\in\R^k$, $$\lim_iF(\underline{x}_i)\rightarrow\infty.$$ Therefore, there exists some compact ball $B$ in $\R^k$, that depends only on $F$, such that for every unimodular lattice $\Lambda$, $$\lim_{\underline{x}\in\Lambda\backslash\underline{0}}F(x,y)=\lim_{\underline{x}\in\Lambda\cap B\backslash\underline{0}}F(x,y).$$ Continuity and positivity follow immediately from compactness.
\end{proof}

As an immediate corollary, we obtain Mahler's Theorem:
\begin{proof}[Proof of Theorem \ref{Positive definite}:] By Lemma \ref{existence of extremal lattice} and Proposition \ref{Continuity}, $\Spec(P)=(a,M_P]$ for some $a\geq 0$ or $\Spec(P)=[b,M_P]$ for some $b>0.$ It is however clear, that by taking $\Lambda$ near the cusp of $\SL_2(\R)/\SL_2(\Z),$ $\inf_{\underline{x}\in\Lambda\backslash\underline{0}}\vert P(\underline{x})\vert$ becomes arbitrarily close to $0$ and hence: $$\Spec(P)=(0,M_P].$$    
\end{proof}

In many rigid examples of star bodies, the function $F$ possesses additional symmetries, such as linear actions. We call some $\gamma\in\SL_k(\R)$ an automorphism of $F$ if for every lattice $\underline{x}\in\R^k:$ $$F(\underline{x})=F(\gamma\cdot\underline{x}).$$ A star body is called \textit{automorphic} if it admits a group $\Gamma$ of automorphisms such that the following holds: There exists a bounded set $\overline{K}_{\Gamma}\subset\R^k$ depending only on $K$ and $\Gamma$ such that for every $\underline{x}\in K,$ there exists $\gamma\in\Gamma$ satisfying $$\gamma\cdot\underline{x}\in\overline{K}_{\Gamma}.$$Every bounded star body is trivially automorphic by taking $\Gamma=\{Id\}$ and $K=\overline{K}_{\Gamma}.$
\begin{thm}[Mahler]\label{Automorhpic}
Every automorphic star body possesses finitely minimized extremal lattices.    
\end{thm}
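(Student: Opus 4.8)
\begin{proof}[Proof sketch of Theorem \ref{Automorhpic}:]
The plan is to start from any extremal lattice and to \emph{fold it back} by the automorphism group $\Gamma$ so that a minimizing sequence becomes bounded, then extract a limit. Normalize so that all lattices lie in $X=\SL_k(\R)/\SL_k(\Z)$ and set $M:=\sup\Spec(F)=\Delta^{-1}(K)$; since $K$ is of finite type we have $0<M<\infty$. By Proposition \ref{existence of extremal lattice} there is an extremal lattice $\Lambda_0\in X$, i.e.\ $\overline{F}(\Lambda_0)=M$. If $\Lambda_0$ is finitely minimized we are done, so assume it is not. Then there is a sequence $u_i\in\Lambda_0\setminus\underline{0}$ with $F(u_i)\to M$, and since $\Lambda_0$ is discrete the $u_i$ cannot stay in a bounded region (else they would take finitely many values and the infimum would be attained); hence $|u_i|\to\infty$.

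Now fold: for $i$ large, $F(u_i)>0$, so $w_i:=F(u_i)^{-1/n}u_i$ is well defined and satisfies $F(w_i)=1$, hence $w_i\in K$. By the automorphic hypothesis there is $\gamma_i\in\Gamma$ with $\gamma_i w_i\in\overline{K}_\Gamma$, and since $\overline{K}_\Gamma$ is bounded and $F(u_i)$ is bounded, the vectors $\gamma_i u_i=F(u_i)^{1/n}\gamma_i w_i$ satisfy $|\gamma_i u_i|\le C$ for a constant $C=C(F,\overline{K}_\Gamma)$, with $\gamma_i u_i\neq\underline{0}$. Put $\Lambda_i:=\gamma_i\Lambda_0\in X$. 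Because each $\gamma_i$ is an automorphism of $F$, $\overline{F}(\Lambda_i)=\overline{F}(\Lambda_0)=M$, so every $\Lambda_i$ is again extremal and contains the nonzero vector $\gamma_i u_i$ of length $\le C$.

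The crucial point is that the $\Lambda_i$ cannot escape to the cusp: by homogeneity and continuity $F$ is bounded on the unit sphere $S^{k-1}$, so any $v\in\R^k$ with $|v|<\delta$ has $F(v)=|v|^n F(v/|v|)<M$ once $\delta$ is small enough; hence $\overline{F}(\Lambda_i)=M$ forces the shortest vector of $\Lambda_i$ to have length $\ge\delta$. By Mahler's compactness criterion the $\Lambda_i$ lie in a fixed compact subset of $X$, so after passing to a subsequence $\Lambda_i\to\Lambda_\infty$ in $X$; upper semi-continuity of $\overline{F}$ (Proposition \ref{existence of extremal lattice}) together with maximality of $M$ gives $\overline{F}(\Lambda_\infty)=M$, so $\Lambda_\infty$ is extremal. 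Choose representatives $\Lambda_i=g_i\Z^k$ with $g_i\to g_\infty\in\SL_k(\R)$, $g_\infty\Z^k=\Lambda_\infty$, and write $\gamma_i u_i=g_i m_i$ with $m_i\in\Z^k$. Since $|m_i|\le\|g_i^{-1}\|\,C$ and $\|g_i^{-1}\|$ stays bounded, the $m_i$ take finitely many values; along a further subsequence $m_i\equiv m\neq\underline{0}$, whence $\gamma_i u_i=g_i m\to u_\infty:=g_\infty m\in\Lambda_\infty\setminus\underline{0}$. Finally, by continuity of $F$ and the automorphism property,
\[
F(u_\infty)=\lim_i F(\gamma_i u_i)=\lim_i F(u_i)=M=\overline{F}(\Lambda_\infty),
\]
so $\Lambda_\infty$ is a finitely minimized extremal lattice.
\end{proof}

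\noindent\emph{Remark on the proof.} The only step that is more than bookkeeping is the compactness claim: one must notice that the single constraint $\overline{F}\equiv M>0$ (rather than boundedness of $K$) already keeps the folded lattices $\Lambda_i$ uniformly away from the cusp, which is exactly what allows the simultaneous extraction of a convergent subsequence of lattices and of the bounded witnessing vectors. Everything else is the automorphic folding together with Mahler's compactness and the semicontinuity of $\overline{F}$.
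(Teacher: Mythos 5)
Your proof is correct, and since the paper states this theorem of Mahler without supplying a proof, there is nothing to compare it against; your argument is the classical one (fold a minimizing sequence back into the bounded set $\overline{K}_{\Gamma}$ by automorphisms, observe that the folded lattices stay uniformly away from the cusp because $\overline{F}\equiv M>0$, then apply Mahler compactness and upper semicontinuity). All the steps check out, including the key observation that the witnessing vectors $\gamma_i u_i$ remain bounded and hence reduce to finitely many integer coordinate vectors $m_i$ after choosing convergent representatives $g_i$.
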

Using the invariance of the form $P(x,y)=xy$ by the diagonal action, we can deduce that the star body associated to the form $P$ is an automorphic star body and hence possesses finitely minimized extremal lattices. Of course, from Markoff's theorem we know much more about the behaviour of this automorphic star body near the top of the spectrum.

\section{Preliminaries on Continued Fractions}
\label{Preliminaries on Continued Fractions}
We list a number of well known facts regarding continued fractions, that could be found in any relevant textbook (see for example \cite{CasselsBook},\cite{CusickFlahive},\cite{KhinchinTextbook}).  \\ \\For any real number $x\notin\Q$ there exists a unique sequence of numbers $$[\alpha_0(x),\alpha_1(x),...,\alpha_N(x),...]\in \Z\times\mathbb{N}\times\mathbb{N}\times\mathbb{N}\times\dots,$$such that $$x=\alpha_0(x)+\frac{1}{\alpha_1(x)+\frac{1}{\alpha_2(x)+\frac{1}{\dots}}}.$$
We call this sequence the continued fractions sequence of $x$. For any $N\in\mathbb{N}$, the real number equal to$$[\alpha_N(x),\alpha_{N+1}(x),...]$$ will be denoted by $a_{N}(x)$. We call any finite sequence $[\alpha_0,\alpha_1,...,\alpha_N]\in \Z\times\mathbb{N}\times\mathbb{N}\times\dots\times\mathbb{N}$ a starting sequence. \\ \\ 
For rational numbers, this procedure terminates and we write $[\alpha_0(x),\alpha_1(x),...,\alpha_N(x),\infty]$ for $$\alpha_0(x)+\frac{1}{\alpha_1(x)+\frac{1}{\dots+\frac{1}{\alpha_N(x)}}}.$$ The continued fractions sequence for rational numbers is unique modulo the identification $$[\alpha_0(x),\alpha_1(x),...,\alpha_N(x),1,\infty]=[\alpha_0(x),\alpha_1(x),...,\alpha_N(x)+1,\infty].$$
For $x\in\R\setminus\Q$, the rational number $\frac{P_N(x)}{Q_N(x)}$ equal to $[\alpha_0(x),\alpha_1(x),...,\alpha_N(x),\infty]$
is called the $N^{\text{th}}$ convergent of $x$. The following Lemmata are standard: 

\begin{lemma}
    Let $x\in\R\setminus\Q.$ The following relation holds between the continued fraction sequence of $x$ and its convergents:
    $$Q_{N+1}(x)=\alpha_{N+1}(x)Q_N(x)+Q_{N-1}(x).$$
\end{lemma}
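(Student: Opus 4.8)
The plan is to derive the recurrence from the fundamental matrix identity for continued fractions. I introduce integer sequences $P_N, Q_N$ for $N\geq -1$ by the initial data $P_{-1}=1$, $Q_{-1}=0$, $P_0=\alpha_0(x)$, $Q_0=1$ together with the rule
$$\begin{pmatrix} \alpha_0(x) & 1 \\ 1 & 0 \end{pmatrix}\begin{pmatrix} \alpha_1(x) & 1 \\ 1 & 0 \end{pmatrix}\cdots\begin{pmatrix} \alpha_N(x) & 1 \\ 1 & 0 \end{pmatrix} = \begin{pmatrix} P_N & P_{N-1} \\ Q_N & Q_{N-1} \end{pmatrix},$$
which is exactly the assertion that $P_{N+1}=\alpha_{N+1}(x)P_N+P_{N-1}$ and $Q_{N+1}=\alpha_{N+1}(x)Q_N+Q_{N-1}$. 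Taking determinants of the product gives $P_NQ_{N-1}-P_{N-1}Q_N=(-1)^{N+1}$; in particular $\gcd(P_N,Q_N)=1$ and $Q_N>0$ for $N\geq 0$, so this normalization is the canonical one.

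Next I would show, by induction on $N$, the Möbius formula
$$[\alpha_0(x),\alpha_1(x),\ldots,\alpha_N(x),t] = \frac{P_N t + P_{N-1}}{Q_N t + Q_{N-1}}$$
for every real $t>0$. The base case $N=0$ is $\alpha_0(x)+1/t=(\alpha_0(x)t+1)/t$. For the inductive step one writes $[\alpha_0(x),\ldots,\alpha_N(x),t]=[\alpha_0(x),\ldots,\alpha_{N-1}(x),\alpha_N(x)+1/t]$ by definition of the continued fraction, applies the inductive hypothesis to this expression of depth $N-1$ with last entry $\alpha_N(x)+1/t$ in place of $t$, and clears the resulting compound fraction using the two recurrences above. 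Passing to the limit $t\to\infty$ (equivalently, comparing leading coefficients in $t$) then yields $\frac{P_N(x)}{Q_N(x)}=[\alpha_0(x),\ldots,\alpha_N(x),\infty]$, so the sequences $P_N,Q_N$ are precisely the numerators and denominators of the convergents of $x$ as defined above, and the claimed identity $Q_{N+1}(x)=\alpha_{N+1}(x)Q_N(x)+Q_{N-1}(x)$ is built into the construction.

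There is no real obstacle: the argument is a routine induction, and the only point deserving attention is the bookkeeping of the normalization — namely checking via the determinant computation that $P_N,Q_N$ are coprime with $Q_N$ positive — which is what guarantees that the displayed recurrence is a genuine statement about the canonically defined convergents rather than a restatement of a definition. The same matrix identity will be reused freely in the sequel for the standard facts about convergents, such as $\left| x - \frac{P_N(x)}{Q_N(x)} \right| < \frac{1}{Q_N(x)Q_{N+1}(x)}$.
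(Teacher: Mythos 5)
Your proof is correct and complete: the matrix identity, the two recurrences it encodes, the determinant computation fixing the normalization, and the M\"obius formula with $t\to\infty$ together constitute the canonical textbook argument. The paper itself offers no proof --- it lists this lemma among facts declared ``standard'' with references to the continued fraction literature --- so your writeup simply supplies the standard derivation that those references contain; there is nothing to compare beyond that.
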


\begin{lemma} \label{Convergents Approximation}
    Let $x\in\R\setminus\Q.$ For any $N\in\mathbb{N},$ we have $$\left\vert x-\frac{P_N(x)}{Q_N(x)}\right\vert=\frac{1}{Q_N(x)(a_{N+1}(x)Q_N(x)+Q_{N-1}(x))}.$$
\end{lemma}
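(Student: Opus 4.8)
The plan is to use the Möbius (matrix) representation of the convergents. First I would record the elementary fact that, for any real number $t>1$, treating $[\alpha_0(x),\alpha_1(x),\dots,\alpha_N(x),t]$ as a finite continued fraction with a real last entry, one has
$$[\alpha_0(x),\dots,\alpha_N(x),t]=\frac{t\,P_N(x)+P_{N-1}(x)}{t\,Q_N(x)+Q_{N-1}(x)}.$$
This follows by induction on $N$ from the recursions $P_{j+1}(x)=\alpha_{j+1}(x)P_j(x)+P_{j-1}(x)$ and $Q_{j+1}(x)=\alpha_{j+1}(x)Q_j(x)+Q_{j-1}(x)$ (the latter being the first Lemma of this section), with the base cases $N=-1,0$ checked directly using $P_{-1}(x)=1$, $Q_{-1}(x)=0$, $P_0(x)=\alpha_0(x)$, $Q_0(x)=1$; equivalently, it is the statement that $t\mapsto[\alpha_0(x),\dots,\alpha_N(x),t]$ is the fractional linear transformation attached to the matrix product $\begin{pmatrix}\alpha_0(x)&1\\1&0\end{pmatrix}\cdots\begin{pmatrix}\alpha_N(x)&1\\1&0\end{pmatrix}$.

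Next I would apply this with $t=a_{N+1}(x)=[\alpha_{N+1}(x),\alpha_{N+2}(x),\dots]$, which is legitimate since $a_{N+1}(x)>1$ and, by the very definition of the continued fraction expansion, $x=[\alpha_0(x),\dots,\alpha_N(x),a_{N+1}(x)]$. This gives
$$x=\frac{a_{N+1}(x)P_N(x)+P_{N-1}(x)}{a_{N+1}(x)Q_N(x)+Q_{N-1}(x)}.$$
Subtracting $\tfrac{P_N(x)}{Q_N(x)}$ and clearing denominators, then invoking the determinant identity $P_N(x)Q_{N-1}(x)-P_{N-1}(x)Q_N(x)=(-1)^{N-1}$ — itself an immediate induction from the same recursions, reflecting that the above matrix product has determinant $(-1)^{N+1}$ — I obtain
$$x-\frac{P_N(x)}{Q_N(x)}=\frac{P_{N-1}(x)Q_N(x)-P_N(x)Q_{N-1}(x)}{Q_N(x)\bigl(a_{N+1}(x)Q_N(x)+Q_{N-1}(x)\bigr)}=\frac{(-1)^{N}}{Q_N(x)\bigl(a_{N+1}(x)Q_N(x)+Q_{N-1}(x)\bigr)}.$$
Taking absolute values yields the claimed formula, once we note that $Q_N(x),Q_{N-1}(x)>0$ and $a_{N+1}(x)>1>0$, so the denominator is positive and no sign ambiguity arises.

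There is no genuine obstacle here: the whole argument is a pair of routine inductions on the length of the expansion. The only points requiring a little care are the edge cases $N=0$ and $N=1$, where one uses the conventions $Q_{-1}(x)=0$, $Q_0(x)=1$, $P_{-1}(x)=1$, $P_0(x)=\alpha_0(x)$ and checks that both the extended convergent identity and the determinant identity persist.
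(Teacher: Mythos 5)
Your proof is correct and is precisely the standard textbook argument; the paper itself offers no proof of this lemma, stating it as a well-known fact from the continued fractions literature. The two inductions you invoke (the extended convergent identity with a real last entry and the determinant identity $P_NQ_{N-1}-P_{N-1}Q_N=(-1)^{N-1}$) are exactly the ingredients of the classical derivation, and your sign bookkeeping and positivity of the denominator check out.
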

Hence, convergents provide a very good rational approximation. Conversely, the rational numbers that are good approximations of $x$ have to be convergents:

\begin{lemma} \label{Good approximation means convergent}
    Let $x\in\R\setminus\Q.$ If for some rational number $\frac{X}{Y}$ we have $$\left\vert x-\frac{X}{Y}\right\vert< \frac{1}{2Y^2},$$ then $(X,Y)=(P_N(x),Q_N(x))$ for some $N\in\mathbb{N}.$
\end{lemma}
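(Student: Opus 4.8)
The plan is to run the classical self-contained argument: show that a good rational approximation $X/Y$ to $x$ can be built into a continued fraction expansion of $x$, exploiting the freedom in the parity of the length of the finite continued fraction of $X/Y$. First I would reduce to the case $\gcd(X,Y)=1$ and $Y\geq 1$ (passing to the reduced form only decreases $Y$, hence preserves the hypothesis $|x-X/Y|<\frac{1}{2Y^2}$). Set
\[
\theta:=Y^2\Bigl(x-\tfrac{X}{Y}\Bigr),
\]
so $\theta\neq 0$ (as $x\notin\Q$) and $|\theta|<\tfrac12$ by assumption.

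Next, write $\tfrac{X}{Y}=[\beta_0,\beta_1,\dots,\beta_n,\infty]$ as a finite continued fraction and let $(p_j,q_j)$ be its convergents, so that $p_n=X$, $q_n=Y$, $q_{-1}=0$, and $p_nq_{n-1}-p_{n-1}q_n=(-1)^{n-1}$. Using the identification $[\dots,\beta_n,\infty]=[\dots,\beta_n-1,1,\infty]$ (and, when $\beta_n=1$, its inverse), one may choose the parity of $n$ at will. Introduce $\omega\in\R$ by $x=\dfrac{\omega p_n+p_{n-1}}{\omega q_n+q_{n-1}}$; since this is an integral Möbius transformation of determinant $\pm 1$ and $x\notin\Q$, also $\omega\notin\Q$. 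Solving this relation for $\omega$, substituting $xq_n-p_n=q_n\bigl(x-\tfrac{X}{Y}\bigr)=\tfrac{\theta}{Y}$ and the elementary identity
$x-\tfrac{p_{n-1}}{q_{n-1}}=\bigl(x-\tfrac{X}{Y}\bigr)+\bigl(\tfrac{p_n}{q_n}-\tfrac{p_{n-1}}{q_{n-1}}\bigr)=\tfrac{\theta}{Y^2}+\tfrac{(-1)^{n-1}}{Yq_{n-1}}$,
a short computation gives
\[
\omega=-\frac{(-1)^{n-1}}{\theta}-\frac{q_{n-1}}{q_n}.
\]

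Now choose the parity of $n$ so that $(-1)^{n-1}$ and $\theta$ have opposite signs. Then $-\dfrac{(-1)^{n-1}}{\theta}=\dfrac{1}{|\theta|}>2$, while $0\leq \dfrac{q_{n-1}}{q_n}\leq 1$ because $q_n=\beta_nq_{n-1}+q_{n-2}\geq q_{n-1}$ (using $\beta_n\geq 1$, $q_{n-2}\geq 0$), so $\omega=\tfrac{1}{|\theta|}-\tfrac{q_{n-1}}{q_n}>2-1=1$. Since $\omega>1$ and $\omega$ is irrational, its continued fraction $\omega=[\gamma_0,\gamma_1,\dots]$ has $\gamma_0\geq 1$, and therefore $x=[\beta_0,\beta_1,\dots,\beta_n,\gamma_0,\gamma_1,\dots]$ is a bona fide continued fraction expansion of the irrational $x$. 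By uniqueness of such expansions, $\beta_j=\alpha_j(x)$ for $0\leq j\leq n$, and in particular $(X,Y)=(p_n,q_n)=(P_n(x),Q_n(x))$, which is the claim.

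The only genuinely delicate point is the parity juggling together with the degenerate small-index cases — for instance $X/Y\in\Z$ (so $n\in\{0,1\}$), $\beta_n=1$, or $q_{n-1}=q_n$ — but each of these passes through the computation above verbatim once one fixes the conventions $p_{-1}=1$, $q_{-1}=0$, since the final estimate used only $\tfrac{q_{n-1}}{q_n}\leq 1$ and $|\theta|<\tfrac12$. As an alternative route one could instead fix the index $N$ with $Q_N(x)\leq Y<Q_{N+1}(x)$, expand $(Y,X)$ in the unimodular basis $\{(Q_N,P_N),(Q_{N+1},P_{N+1})\}$, and play the alternating signs of $Q_j(x)x-P_j(x)$ against Lemma~\ref{Convergents Approximation}; this also works but requires first establishing the best-approximation property of convergents, so the argument via $\omega$ is preferable.
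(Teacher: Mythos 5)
Your argument is correct: it is the classical Legendre proof, exploiting the two finite continued fraction expansions of $X/Y$ to fix the parity of $n$, and the computation $\omega=-\tfrac{(-1)^{n-1}}{\theta}-\tfrac{q_{n-1}}{q_n}>1$ is right, as is the treatment of the degenerate cases. The paper itself offers no proof of this lemma — it is listed among the standard continued fraction facts cited from textbooks — so there is nothing to compare against; your write-up is a valid self-contained substitute.
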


Moreover, the convergents provide the best possible rational approximation in the following sense:

\begin{lemma} \label{Best rational approximation}
Let $x\in\R\setminus\Q.$ Then for all $Y<Q_N(x)$ and any $X$ we have $$\left\vert x-\frac{X}{Y}\right\vert>\left\vert x-\frac{P_N(x)}{Q_N(x)}\right\vert.$$
\end{lemma}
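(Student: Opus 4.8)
The plan is to first establish the sharper ``best approximation of the second kind'' bound $|Yx - X| \ge |Q_N(x)x - P_N(x)|$ for every integer pair $(X,Y)$ with $0 < Y < Q_N(x)$, and then to divide through by the denominators, where the hypothesis $Y < Q_N(x)$ turns the resulting inequality into a strict one. Throughout we may assume $Y > 0$ after replacing $(X,Y)$ by $(-X,-Y)$; note that $|Q_N(x)x - P_N(x)| > 0$ since $x \notin \Q$, and that for $N = 0$ the statement is vacuous because $Q_0(x) = 1$ admits no integer $Y$ with $0 < Y < 1$, so we assume $N \ge 1$.

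First I would invoke the classical determinant identity $P_N(x)Q_{N+1}(x) - P_{N+1}(x)Q_N(x) = (-1)^{N+1}$ (in particular $\pm 1$), which follows by a one-line induction from the recursion recorded above for $Q_N$ together with the identical recursion $P_{N+1}(x) = \alpha_{N+1}(x)P_N(x) + P_{N-1}(x)$ for $P_N$. Hence $(P_N(x), Q_N(x))$ and $(P_{N+1}(x), Q_{N+1}(x))$ form a $\Z$-basis of $\Z^2$, so there are unique integers $u, v$ with $X = uP_N(x) + vP_{N+1}(x)$ and $Y = uQ_N(x) + vQ_{N+1}(x)$.

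Next I would pin down the signs of $u$ and $v$ using $0 < Y < Q_N(x)$. If $v = 0$ then $Y = uQ_N(x)$, impossible for a nonzero integer $u$ once $0 < Y < Q_N(x)$; if $u = 0$ then $Y = vQ_{N+1}(x) \ge Q_{N+1}(x) > Q_N(x)$, again impossible; and if $u,v$ were nonzero of the same sign then $Y = |u|Q_N(x) + |v|Q_{N+1}(x) \ge Q_N(x) + Q_{N+1}(x) > Q_N(x)$, once more impossible. So $u$ and $v$ are nonzero with opposite signs. On the other hand, the even-indexed convergents of an irrational $x$ increase to $x$ while the odd-indexed ones decrease to $x$, which is exactly the statement that $Q_N(x)x - P_N(x)$ and $Q_{N+1}(x)x - P_{N+1}(x)$ have opposite signs (this again reduces to the alternating sign of the determinant identity, or can be read off from Lemma \ref{Convergents Approximation}). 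Consequently $u\bigl(Q_N(x)x - P_N(x)\bigr)$ and $v\bigl(Q_{N+1}(x)x - P_{N+1}(x)\bigr)$ have the \emph{same} sign, so writing $Yx - X = u\bigl(Q_N(x)x - P_N(x)\bigr) + v\bigl(Q_{N+1}(x)x - P_{N+1}(x)\bigr)$ there is no cancellation:
$$|Yx - X| = |u|\,|Q_N(x)x - P_N(x)| + |v|\,|Q_{N+1}(x)x - P_{N+1}(x)| \ge |Q_N(x)x - P_N(x)|,$$
since $|u| \ge 1$.

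Finally, dividing by the positive denominators and using $Y < Q_N(x)$ together with $|Q_N(x)x - P_N(x)| > 0$,
$$\left|x - \frac{X}{Y}\right| = \frac{|Yx - X|}{Y} \ge \frac{|Q_N(x)x - P_N(x)|}{Y} > \frac{|Q_N(x)x - P_N(x)|}{Q_N(x)} = \left|x - \frac{P_N(x)}{Q_N(x)}\right|,$$
which is the assertion. I do not expect a real obstacle; the only points requiring care are the bookkeeping of signs — the opposite-sign pairing of $u,v$ against the opposite-sign alternation of $Q_N(x)x - P_N(x)$ — and recording the two auxiliary facts ($\pm 1$ determinant identity and the straddling of $x$ by consecutive convergents), both of which are standard and dispatched by short inductions or citations.
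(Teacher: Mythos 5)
Your proof is correct. The paper does not prove this lemma at all — it is listed among the "standard" continued-fraction facts and deferred to the textbooks cited in Section \ref{Preliminaries on Continued Fractions} — so there is no argument to compare against; what you give is the classical best-approximation argument (unimodular change of basis via the determinant identity $P_NQ_{N+1}-P_{N+1}Q_N=\pm1$, sign analysis of $u,v$ against the alternation of $Q_Nx-P_N$, no cancellation, then divide by $Y<Q_N(x)$ to get strictness), and all the steps, including the edge cases $N=0$ and $u=0$ or $v=0$, check out.
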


Regarding rational approximation of algebraic numbers, we have the following celebrated theorem:

\begin{thm}[Thue-Siegel-Roth]
Let $\alpha$ be some algebraric number and $\delta>0.$ There exists $N_0\in\mathbb{N}$ such that for all $X,Y\in \Z$ with $Y\geq N_0$: $$\left\vert\alpha-\frac{X}{Y}\right\vert>\frac{1}{Y^{2+\delta}}.$$
    
\end{thm}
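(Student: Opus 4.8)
The statement is the Thue--Siegel--Roth theorem, and the plan would be to reproduce the classical proof by the polynomial method. Argue by contradiction: assume $\alpha$ is algebraic of degree $d\geq 2$ (the case $d=1$ being elementary) and that infinitely many reduced fractions $X/Y$ satisfy $\left\vert\alpha-\frac{X}{Y}\right\vert<Y^{-2-\delta}$. Fix a large integer $m$, to be chosen last, and work with polynomials $P(x_1,\dots,x_m)\in\Z[x_1,\dots,x_m]$ with $\deg_{x_j}P\leq r_j$; for a weight vector $(r_1,\dots,r_m)$ and a point $\underline{z}$, say $P$ has index $\geq t$ at $\underline{z}$ if every derivative $\partial_{x_1}^{i_1}\cdots\partial_{x_m}^{i_m}P$ with $\sum_j i_j/r_j<t$ vanishes at $\underline{z}$.

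First I would construct an auxiliary polynomial. The coefficients of $P$ number $\prod_j(r_j+1)$, while the requirement that $P$ have index $\geq(\tfrac12-\eta)m$ at $(\alpha,\dots,\alpha)$ amounts to $d$ rational linear conditions for each lattice point $(i_1,\dots,i_m)$ with $0\leq i_j\leq r_j$ and $\sum_j i_j/r_j<(\tfrac12-\eta)m$; by a central-limit (large-deviation) estimate the number of such lattice points is $\leq\rho(m,\eta)\prod_j(r_j+1)$ with $\rho(m,\eta)\to 0$ as $m\to\infty$ for $\eta$ fixed. Hence, for $m$ large enough that $d\,\rho(m,\eta)<\tfrac12$, Siegel's lemma produces a nonzero $P\in\Z[x_1,\dots,x_m]$ of height at most $C^{\,r_1+\cdots+r_m}$ (with $C$ depending only on $\alpha$) having index $\geq(\tfrac12-\eta)m$ at $(\alpha,\dots,\alpha)$. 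Then, from the infinite supply of good approximations, select $X_1/Y_1,\dots,X_m/Y_m$ with $Y_1$ enormous compared with everything so far and with $\log Y_{j+1}$ far larger than $\log Y_j$; choose $r_j$ so that each product $r_j\log Y_j$ is essentially a common value $L$ (possible exactly because the $Y_j$ grow so rapidly), so that $r_1+\cdots+r_m$ is tiny compared with $L$.

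The crucial input is \emph{Roth's Lemma}: a nonzero $P\in\Z[x_1,\dots,x_m]$ with $\deg_{x_j}P\leq r_j$, whose degrees decrease rapidly enough, whose log-height is small compared with $r_m\log Y_m$, and with $Y_1,\dots,Y_m$ sufficiently large, has index $<\epsilon m$ at $(X_1/Y_1,\dots,X_m/Y_m)$. Granting this, there is a derivative $P'=\partial_{x_1}^{i_1}\cdots\partial_{x_m}^{i_m}P$ with $\sum_j i_j/r_j<\epsilon m$ and $P'(X_1/Y_1,\dots,X_m/Y_m)\neq 0$, and $P'$ still has index $\geq(\tfrac12-\eta-\epsilon)m$ at $(\alpha,\dots,\alpha)$. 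On the one hand $\prod_j Y_j^{r_j}\cdot P'(X_1/Y_1,\dots,X_m/Y_m)$ is a nonzero integer, so $\vert P'(X_1/Y_1,\dots,X_m/Y_m)\vert\geq\prod_j Y_j^{-r_j}=e^{-(1+o(1))mL}$. On the other hand, expanding
$$P'\!\left(\tfrac{X_1}{Y_1},\dots,\tfrac{X_m}{Y_m}\right)=\sum_{\mathbf{j}}\frac{1}{\mathbf{j}!}\,\partial_{x_1}^{j_1}\!\cdots\partial_{x_m}^{j_m}P'(\alpha,\dots,\alpha)\prod_{l}\left(\tfrac{X_l}{Y_l}-\alpha\right)^{j_l},$$
only terms with $\sum_l j_l/r_l\geq(\tfrac12-\eta-\epsilon)m$ survive; for each such term $\prod_l\vert X_l/Y_l-\alpha\vert^{j_l}\leq\prod_l Y_l^{-(2+\delta)j_l}\leq e^{-(2+\delta)(\frac12-\eta-\epsilon)(1+o(1))mL}$, while the derivatives of $P'$ at $(\alpha,\dots,\alpha)$ and the number of terms contribute only $e^{O(r_1+\cdots+r_m)}=e^{o(mL)}$. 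Comparing the two estimates forces $(2+\delta)(\tfrac12-\eta-\epsilon)\leq 1+o(1)$; but choosing $\eta,\epsilon$ small enough that $(2+\delta)(\tfrac12-\eta-\epsilon)>1$, then $m$ large (for Siegel's lemma), then $Y_1$ huge and the $Y_j$ rapidly growing (for Roth's lemma and for the $o(1)$ terms), yields a contradiction. Thus only finitely many $X/Y$ satisfy the inequality, and absorbing those finitely many exceptions by enlarging $N_0$ gives the theorem.

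The main obstacle is \emph{Roth's Lemma} — the assertion that the auxiliary polynomial, although forced by construction to vanish to large index at the algebraic point, cannot simultaneously vanish to more than negligible index at the generic rational point $(X_1/Y_1,\dots,X_m/Y_m)$. The Siegel's-lemma construction and the final comparison of exponential orders are essentially bookkeeping in the parameters $m,\eta,\epsilon,(r_j),(Y_j)$; by contrast Roth's Lemma demands its own induction on the number of variables, carried out via a generalized-Wronskian factorization of $P$ of the shape $P=\sum_k U_k(x_1,\dots,x_{m-1})\,V_k(x_m)$ that controls how index accumulates across variables, and it is precisely this point where Roth improved on the weaker exponents of Thue, Siegel, Dyson and Gelfond.
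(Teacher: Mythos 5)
The paper offers no proof of this statement: it is the classical Thue--Siegel--Roth theorem, quoted as a known external result (and the paper even remarks that Thue's weaker exponent would suffice for its purposes, and that even that use can be avoided). So there is no internal argument to compare yours against. Your outline is an accurate account of Roth's proof: the auxiliary polynomial in $m$ variables with index $\geq(\tfrac12-\eta)m$ at $(\alpha,\dots,\alpha)$ obtained from Siegel's lemma together with a large-deviation count of the vanishing conditions, the choice of rapidly growing denominators $Y_j$ with $r_j\log Y_j$ nearly constant, the comparison of the integrality lower bound $\prod_j Y_j^{-r_j}$ against the Taylor expansion about the algebraic point, and the concluding inequality $(2+\delta)(\tfrac12-\eta-\epsilon)\le 1+o(1)$. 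The order of quantifiers in the parameter choices (first $\eta,\epsilon$, then $m$, then the $Y_j$) is also correct.

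As a self-contained proof, however, the proposal is incomplete at exactly the point you flag: Roth's Lemma is stated and motivated but not proven. That lemma --- the assertion that a nonzero integral polynomial of small height and rapidly decreasing partial degrees cannot have index $\geq\epsilon m$ at the rational point $(X_1/Y_1,\dots,X_m/Y_m)$ --- is the entire content of the theorem beyond the Liouville/Thue/Siegel/Dyson range, and its proof (induction on $m$ via generalized Wronskian factorizations) is by far the longest and most delicate part of the argument. Siegel's lemma is likewise invoked as a black box, though that is routine. If the two lemmas are meant to be cited from the literature, your reduction is correct and complete; if a proof from scratch is intended, the core is missing.
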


\section{The star body problem for binary cubic forms}
\label{The star body problem for binary cubic forms}
We consider the action of $\GL_2\left(\mathbb{R}\right)$ on the space of binary cubic forms given by 
\begin{align}P\circ\begin{pmatrix}
    a & b \\ 
    c & d \\
\end{pmatrix}^{-1}(x,y)=P(ax+by,cx+dy).\notag\end{align} 
We define the action with the inverse so that if the roots of the equation $P(z,1)=\nolinebreak0$ are denoted by $\rho_1,\rho_2,\rho_3$, then the roots of the equation $P\circ T(z,1)=0$ are $T(\rho_1),T(\rho_2),T(\rho_3)$. The algebra of invariants for the action of $\SL_2(\R)$ on the space of binary cubic forms is a polynomial algebra in one variable generated by the discriminant polynomial of degree $4$ (\cite{InvariantTheory}):
\begin{thm}\label{Mordell Lemma} 
    If $P_1,P_2$ are two real binary cubic forms with the same discriminant, then there exists a linear transformation $T\in \SL_2\left(\mathbb{R}\right)$ such that $$P_1\circ T\left(x,y\right)=P_2\left(x,y\right).$$
\end{thm}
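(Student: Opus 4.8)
The plan is to reduce the statement to a standard fact from classical invariant theory. First I would observe that the space $V$ of binary cubic forms is the four-dimensional representation $\Sym^3(\mathbb{R}^2)$ of $\SL_2(\mathbb{R})$, and that the discriminant $D$ is a degree-$4$ polynomial on $V$ which, by the cited invariant theory (\cite{InvariantTheory}), generates the full algebra of $\SL_2(\mathbb{R})$-invariant polynomials on $V$. Consequently two forms in the same $\SL_2(\mathbb{R})$-orbit have equal discriminant; the content of the theorem is the converse, that the fibres of $D$ are single orbits.

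Next I would establish the converse by exhibiting a normal form. Working first over $\mathbb{C}$ (or more precisely, carefully tracking the real structure) I would split into cases according to the factorization type of $P$ over $\mathbb{R}$. If $P$ has three distinct real roots, I can send any three of them by an element of $\SL_2(\mathbb{R})$ (acting by Möbius transformations on roots, as normalized in the excerpt just before the theorem) to a convenient triple such as $\{0,1,\infty\}$ up to scaling, reducing $P$ to a scalar multiple of $xy(x-y)$, and the scalar is then pinned down by the value of $D$. If $P$ has one real root and a pair of complex conjugate roots, I would similarly normalize the real root to $\infty$ and the conjugate pair to $\pm i$, obtaining a scalar multiple of $x(x^2+y^2)$, again with the scalar fixed by $D$. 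The key point in each case is that $\SL_2(\mathbb{R})$ acts transitively on the relevant configurations of roots in $\mathbb{P}^1(\mathbb{R})$ (resp.\ conjugate pairs in $\mathbb{P}^1(\mathbb{C})$), and the one remaining scalar degree of freedom is exactly detected by the single invariant $D$, using that $D$ scales as $D(\lambda P)=\lambda^{?}D(P)$ so that for fixed nonzero $D$ the scalar is determined up to a sign which can be absorbed by a further $\SL_2(\mathbb{R})$ element (e.g.\ $\mathrm{diag}(t,t^{-1})$ rescales $xy(x-y)$).

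Putting the pieces together: given $P_1,P_2$ with $D_{P_1}=D_{P_2}\neq 0$, the discriminant being nonzero forces three \emph{distinct} roots, so both $P_1$ and $P_2$ fall into one of the two cases above (and into the \emph{same} case, since the sign of $D$ distinguishes them — this matches the dichotomy $D_P<0$ versus $D_P>0$ appearing in Mordell's theorem). By the normalization step there are $T_1,T_2\in\SL_2(\mathbb{R})$ with $P_1\circ T_1 = P_2\circ T_2 = P_0$ the appropriate normal form, whence $P_1\circ(T_1T_2^{-1})=P_2$ and $T=T_1T_2^{-1}\in\SL_2(\mathbb{R})$ is the desired transformation. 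I expect the main subtlety to be bookkeeping over $\mathbb{R}$ rather than $\mathbb{C}$ — ensuring that the normalizing Möbius transformation can be chosen in $\SL_2(\mathbb{R})$ (not merely $\GL_2(\mathbb{R})$ or $\mathrm{PGL}_2(\mathbb{R})$) and that the leftover scalar, including its sign, is genuinely killed by an element of $\SL_2(\mathbb{R})$; the degenerate root configurations are harmless here precisely because the hypothesis $D\neq 0$ excludes them, though the theorem as stated restricts to cubic forms and one should note it implicitly assumes $P_1,P_2$ are genuinely of degree $3$.
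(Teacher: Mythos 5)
Your plan is a genuine proof, whereas the paper offers none: it simply records this statement as classical, quoting Gordan's description of the invariant ring. So the comparison is really between your normal-form argument and the invariant-theoretic framing. One point worth making explicit: the fact that the $\SL_2$-invariants of binary cubics form the polynomial ring generated by $D$ does \emph{not} by itself yield the theorem over $\R$ — invariants separate (closed) orbits over an algebraically closed field, and a single fibre of $D$ could a priori break into several $\SL_2(\R)$-orbits. Your second paragraph, the case analysis by factorization type together with transitivity of $\SL_2(\R)$ on the relevant root configurations, is therefore not bookkeeping but the actual content of the real statement, and it is the standard way to prove it. The sign dichotomy you invoke is correct ($D_P>0$ iff three distinct real roots, $D_P<0$ iff one real root and a conjugate pair), so equal nonzero discriminants do force the same case.

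Two details in your sketch need repair, though both are fixable. First, after normalizing the roots to $\{0,1,\infty\}$ the residual freedom inside $\SL_2(\R)$ preserving that root set is \emph{finite}, so $\mathrm{diag}(t,t^{-1})$ does not ``rescale'' $xy(x-y)$ for general $t$: one computes $xy(x-y)\mapsto xy(tx-t^{-1}y)$, which is a scalar multiple only for $t=\pm1$. What saves you is that the scalar $c$ in $c\,xy(x-y)$ is pinned down by $c^4=D_P$ (hence up to sign), and the sign is absorbed by $-I\in\SL_2(\R)$, which negates any odd-degree form; the analogous statement holds for $c\,x(x^2+y^2)$ with $c^4\cdot(-4)=D_P$. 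Second, getting the normalizing map into $\SL_2(\R)$ rather than $\GL_2(\R)$: the unique Möbius map sending an ordered triple to $(0,1,\infty)$ may reverse orientation, and then you must compose with an odd permutation of $\{0,1,\infty\}$ (e.g.\ $z\mapsto 1-z$, determinant $-1$) to land in $\mathrm{PSL}_2(\R)$; in the complex-root case, transitivity of $\mathrm{PSL}_2(\R)$ on the upper half-plane plus transitivity of the stabilizer $\mathrm{PSO}(2)$ of $i$ on $\mathbb{P}^1(\R)$ does the job. Finally, you are right that the hypothesis $D\neq0$ is genuinely needed (e.g.\ $x^3$ and $x^2y$ have equal discriminant $0$ but different root multiplicities, hence are inequivalent); the paper omits it from the statement but only ever applies the theorem to forms of nonzero discriminant.
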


Since all binary cubic forms of negative (or positive) discriminant are equivalent under linear transformations, we have the following corollaries regarding the spectra of such forms. \\
\begin{corollary}
If $P_1,P_2$ are binary cubic forms of the same discriminant, then $$\Spec(P_1)=\Spec(P_2).$$
\end{corollary}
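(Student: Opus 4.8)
The statement to prove is the corollary that binary cubic forms of equal discriminant have equal spectrum, given Theorem \ref{Mordell Lemma}. The plan is straightforward: reduce to the observation that $\Spec$ depends only on the $\SL_2(\R)$-orbit of the form.

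\medskip

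The plan is to use Theorem \ref{Mordell Lemma} to produce $T\in\SL_2(\R)$ with $P_1\circ T = P_2$, and then show that applying an element of $\SL_2(\R)$ to a form does not change its spectrum, because such a transformation merely permutes the set of unimodular lattices. Concretely, first I would recall that for any $\gamma\in\SL_2(\R)$ and any unimodular lattice $\Lambda$, the set $\gamma^{-1}\Lambda$ is again a unimodular lattice (since $\det\gamma = 1$), and the map $\Lambda\mapsto \gamma^{-1}\Lambda$ is a bijection on the space of unimodular lattices. Next, for a fixed form $Q$ and transformation $\gamma$, I would unwind the definition of the action: $\overline{Q\circ\gamma}(\Lambda) = \inf_{\underline{x}\in\Lambda\setminus\underline{0}} |Q\circ\gamma(\underline{x})| = \inf_{\underline{x}\in\Lambda\setminus\underline{0}} |Q(\gamma^{-1}\underline{x})|$, and reindexing via $\underline{y} = \gamma^{-1}\underline{x}$ (which ranges over $\gamma^{-1}\Lambda\setminus\underline{0}$) gives $\overline{Q\circ\gamma}(\Lambda) = \overline{Q}(\gamma^{-1}\Lambda)$. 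Taking the union over all unimodular $\Lambda$ and using that $\Lambda\mapsto\gamma^{-1}\Lambda$ is a bijection, I conclude $\Spec(Q\circ\gamma) = \Spec(Q)$.

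\medskip

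With this invariance in hand, the corollary is immediate: since $P_1$ and $P_2$ have the same discriminant, Theorem \ref{Mordell Lemma} gives $T\in\SL_2(\R)$ with $P_1\circ T = P_2$, hence $\Spec(P_2) = \Spec(P_1\circ T) = \Spec(P_1)$.

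\medskip

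There is essentially no obstacle here; the only point requiring a little care is bookkeeping with the inverse in the definition of the action (the paper deliberately defined $P\circ T^{-1}(x,y) = P(T(x,y))$ so that roots transform covariantly), so I would make sure the reindexing is written consistently with that convention. The substantive content was already done in proving Theorem \ref{Mordell Lemma}; this corollary is a formal consequence of the $\SL_2(\R)$-invariance of the infimum over unimodular lattices.
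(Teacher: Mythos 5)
Your proof is correct and follows the same route the paper intends: apply Theorem \ref{Mordell Lemma} to get $T\in\SL_2(\R)$ with $P_1\circ T=P_2$, and use that the $\SL_2(\R)$-action permutes the unimodular lattices, so $\Spec$ is an orbit invariant. The paper leaves this reindexing argument implicit, and your careful handling of the inverse in the action is exactly the right bookkeeping.
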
 \noindent Since the discriminant of a binary cubic form is a polynomial of degree 4, we have the following:

\begin{corollary}
The set $$\frac{\Spec(P)}{\sqrt[4]{\left\vert D_P\right\vert}}$$ depends only on the sign of the discriminant of the form $P$. 
\end{corollary}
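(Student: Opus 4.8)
The plan is to combine the preceding corollary (equal discriminant forces equal spectrum) with the two elementary scaling laws: how $\Spec$ transforms under scaling of the form, and how the discriminant transforms under scaling of the form. First I would record that for any nonzero $\lambda\in\R$ and any binary cubic form $P$,
$$\Spec(\lambda P)=|\lambda|\cdot\Spec(P),$$
which is immediate from the definition $\Spec(\lambda P)=\left\{\inf_{\underline{x}\in\Lambda\setminus\underline{0}}|\lambda P(\underline{x})|:\Lambda\text{ unimodular}\right\}$, since the factor $|\lambda|$ pulls out of every infimum uniformly in $\Lambda$.

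Next I would recall that the discriminant of a binary cubic form $P(x,y)=c_0x^3+c_1x^2y+c_2xy^2+c_3y^3$ is a polynomial in the coefficients $(c_0,c_1,c_2,c_3)$ that is homogeneous of degree $4$ (explicitly $D_P=18c_0c_1c_2c_3-4c_1^3c_3+c_1^2c_2^2-4c_0c_2^3-27c_0^2c_3^2$), so that
$$D_{\lambda P}=\lambda^4 D_P.$$
In particular the sign of the discriminant is unchanged under scaling, while $|D_{\lambda P}|=\lambda^4|D_P|$.

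Finally I would put these together. Let $P_1,P_2$ be binary cubic forms with $\mathrm{sgn}(D_{P_1})=\mathrm{sgn}(D_{P_2})\neq 0$, and set $\lambda=\left(|D_{P_2}|/|D_{P_1}|\right)^{1/4}>0$. Then $D_{\lambda P_1}=\lambda^4 D_{P_1}$ has the same sign as $D_{P_1}$, hence as $D_{P_2}$, and absolute value $\lambda^4|D_{P_1}|=|D_{P_2}|$; therefore $D_{\lambda P_1}=D_{P_2}$. By the previous corollary (which rests on Theorem \ref{Mordell Lemma} together with the obvious $\SL_2(\R)$-invariance of $\Spec$), $\Spec(\lambda P_1)=\Spec(P_2)$, i.e. $\lambda\,\Spec(P_1)=\Spec(P_2)$. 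Dividing through by $|D_{P_2}|^{1/4}=\lambda\,|D_{P_1}|^{1/4}$ gives
$$\frac{\Spec(P_1)}{\sqrt[4]{|D_{P_1}|}}=\frac{\Spec(P_2)}{\sqrt[4]{|D_{P_2}|}},$$
which is the claim. There is no real obstacle here: the only nonroutine ingredient is the preceding corollary, already established, and the degree-$4$ homogeneity of the cubic discriminant, which is classical; everything else is bookkeeping about how both sides rescale.
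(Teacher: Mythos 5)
Your argument is correct and is exactly the one the paper intends: the paper derives this corollary from the preceding one together with the degree-$4$ homogeneity of the cubic discriminant, which is precisely your rescaling $P\mapsto\lambda P$ with $\lambda=\left(|D_{P_2}|/|D_{P_1}|\right)^{1/4}$. Nothing is missing.
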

\noindent Hence we can freely speak of the ``Spectrum of binary cubic forms with positive(resp. negative) discriminant". As a consequence, we have the following ``duality" phenomenon:
\begin{corollary}\label{Duality} If $P$ is some binary cubic form with discriminant $D_P$, $$\Spec(P)=\left\{\inf_{\Z^2\backslash\underline{0}}\left\vert G\left(x,y\right)\right\vert,G\,\text{varies over binary cubic forms of discriminant }D_P\right\}.$$

\end{corollary}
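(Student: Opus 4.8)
The plan is to leverage the two preceding corollaries, which already tell us that the spectrum depends only on $D_P$, together with Mordell's equivalence theorem (Theorem \ref{Mordell Lemma}) and the basic observation that $\Spec(P)$ can be computed by fixing the lattice to be $\Z^2$ and varying the form over an $\SL_2(\R)$-orbit, rather than fixing the form and varying the lattice. First I would unwind the definition of $\Spec(P)$: by definition it is the set of values $\inf_{\underline{x}\in\Lambda\backslash\underline{0}}\vert P(\underline{x})\vert$ as $\Lambda$ ranges over unimodular lattices. Writing $\Lambda = g\cdot\Z^2$ for $g\in\SL_2(\R)$ (every unimodular lattice arises this way), we have
$$\inf_{\underline{x}\in g\Z^2\backslash\underline{0}}\vert P(\underline{x})\vert = \inf_{\underline{x}\in\Z^2\backslash\underline{0}}\vert P(g\underline{x})\vert = \inf_{\underline{x}\in\Z^2\backslash\underline{0}}\vert (P\circ g^{-1})(\underline{x})\vert,$$
using the $\GL_2(\R)$-action on forms defined above. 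Since $P\circ g^{-1}$ has the same discriminant as $P$ (the discriminant being an $\SL_2(\R)$-invariant), this shows
$$\Spec(P)\subseteq\left\{\inf_{\Z^2\backslash\underline{0}}\vert G(x,y)\vert : G\text{ a binary cubic form of discriminant }D_P\right\}.$$

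For the reverse inclusion, I would take an arbitrary binary cubic form $G$ with discriminant $D_P$. By Theorem \ref{Mordell Lemma} there exists $T\in\SL_2(\R)$ with $G\circ T = P$, equivalently $G = P\circ T^{-1}$. Then $\inf_{\Z^2\backslash\underline{0}}\vert G(x,y)\vert = \inf_{\Z^2\backslash\underline{0}}\vert (P\circ T^{-1})(x,y)\vert = \inf_{\underline{x}\in T^{-1}\Z^2\backslash\underline{0}}\vert P(\underline{x})\vert$, which is the value of the infimum of $\vert P\vert$ over the unimodular lattice $T^{-1}\Z^2$, hence an element of $\Spec(P)$. Combining the two inclusions gives the claimed equality.

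There is essentially no obstacle here; the statement is a formal consequence of results already established. The only point that requires a modicum of care is the bookkeeping of the inverse in the $\GL_2(\R)$-action — the action was deliberately defined via $g^{-1}$ so that roots transform covariantly, so one must check the identity $\vert P(g\underline{x})\vert = \vert(P\circ g^{-1})(\underline{x})\vert$ matches the convention $P\circ g^{-1}(x,y) = P(ax+by,cx+dy)$ for $g = \begin{pmatrix} a & b \\ c & d\end{pmatrix}$, which it does by direct substitution. One should also note that it suffices to range $\Lambda$ over $\SL_2(\R)$-translates of $\Z^2$ rather than all unimodular lattices, but this is automatic since $\SL_2(\R)$ acts transitively on the space of covolume-one lattices.
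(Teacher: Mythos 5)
Your proof is correct and follows exactly the route the paper intends: the corollary is a formal consequence of Theorem \ref{Mordell Lemma} together with the observation that varying the lattice for a fixed form is the same as varying the form over its $\SL_2(\R)$-orbit evaluated on $\Z^2$ (the paper leaves this implicit, summarizing it only with the remark that ``varying the form with a fixed lattice is the same as varying the lattice with a fixed form''). Your bookkeeping of the inverse in the action convention is also consistent with the paper's definition.
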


In other words, varying the form with a fixed lattice is the same as varying the lattice with a fixed form. In his original paper, Mordell introduced the form $$P(x,y)=x^3-xy^2-y^3$$ of $D_P=-23$, and showed that for any unimodular lattice $\Lambda$, there exists $v\in\Lambda\backslash\underline{0},$ such that $$\left\vert P(v)\right\vert\leq 1.$$
Since $$\inf\limits_{(x,y)\in \Z^2\backslash\underline{0}}\left\vert P(x,y)\right\vert=1,$$ he proved that $\Z^2$ is an extremal lattice for $P$ and the maximum point of  $\Spec(P)$ is equal to 1. Similarly, for positive discriminant he considered the form $$P(x,y)=x^3+xy^2-2x^2y-y^3$$ of $D_P=49$, and showed that for any unimodular lattice $\Lambda$, there exists $v\in\Lambda\backslash\underline{0},$ such that $$\left\vert P(v)\right\vert\leq 1.$$
Again, since $$\inf\limits_{(x,y)\in \Z^2\backslash\underline{0}}\left\vert P(x,y)\right\vert=1,$$ he proved that $\Z^2$ is an extremal lattice for $P$ and the maximum point of $\Spec(P)$ is equal to $1$.  

\subsection{The case of negative discriminant} \label{The case of negative discriminant} \leavevmode\newline \leavevmode\newline
Motivated by Mordell's findings (\cite{Mordell}) and Corollary \ref{Duality},  we introduce the following continuous family of forms. For $t\geq 0$ set:
$$P_{t}(x,y)=\left(x-\rho y\right)\left((x+\frac{\rho}{2}y)^2+(\frac{3}{4}\rho^2-1)(1+t^2)y^2\right),$$ where $\rho$ denotes the real root of $x^3-x-1=0.$ For example, we have $P_0(x,y)=x^3-xy^2-y^3,$ the extremal form given by Mordell. The proof of Theorem \ref{D<0} is very simple:
\begin{proof}[Proof of Theorem \ref{D<0}]
We start by noticing that $\left\vert P_t(x,y)\right\vert\geq \left\vert P_0(x,y)\right\vert.$
Hence, we have that $$\inf_{(x,y)\in \Z^2\backslash\underline{0}}\left\vert P_t(x,y)\right\vert\geq\inf_{(x,y)\in \Z^2\backslash\underline{0}}\left\vert P_0(x,y)\right\vert=1.$$ On the other hand, $P_t(1,0)=1$ and thus $$\inf_{(x,y)\in \Z^2\backslash\underline{0}}\left\vert P_t(x,y)\right\vert=1.$$ Since $\lim\limits_{t\rightarrow\infty}D_{P_t}\rightarrow\infty,$ we obtain the Theorem. \\ \\ 
\end{proof}

\subsection{The case of positive discriminant} \label{The case of positive discriminant} \leavevmode\newline \leavevmode\newline
Unlike the case of $D<0,$ for $D>0$ we cannot take advantage of the positive definite part of the form to trivially construct a one parameter family of unimodular transformations, with infima covering the entire spectral interval. In fact, as we show in Proposition \ref{Non-existence of continuous curve}, it is not possible to find a continuous family of lattices, connecting $0$ to the maximum point of the spectrum continuously. Similarly to the definition of finitely minimized lattices, we have:  
\begin{defn}\label{fntly minimized}

We call a form $P$ finitely minimized if the value $$m(P)=\inf_{(x,y)\in{\Z^2}\backslash\underline{0}}\vert P\left(x,y\right)\vert$$ is a minimum (attained by the function $P$ for some $\left(x,y\right)\neq (0,0))$. 

\end{defn}

It is obvious that if $P(z,1)=0$ has a rational root, then $m(P)=0$. However, $m(P)$ is almost everywhere positive and, in fact, almost all binary cubic forms $P$ of $D_P>0$ are finitely minimized:

\begin{prop}\label{Finitely Minimized}

For almost all binary cubic forms $P$ of positive discriminant, $P$ is finitely minimized.
\end{prop}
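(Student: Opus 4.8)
The plan is to reduce the statement to a single metric Diophantine fact about the real roots of $P$. Write $P(x,y)=\sum_i c_ix^iy^{3-i}$. On the null set of forms for which either $c_3=0$ or $P(z,1)=0$ has a rational root, one has $P(p,q)=0$ for some $(p,q)\in\Z^2\setminus\underline 0$, so $m(P)=0$ is attained; discard this set. It then suffices to show that for almost every $P$ with $D_P>0$ one has $\lvert P(x,y)\rvert\to\infty$ as $\lvert(x,y)\rvert\to\infty$ over $\Z^2$, since then $\{(x,y)\in\Z^2:\lvert P(x,y)\rvert\le\lvert P(1,0)\rvert\}$ is a finite non-empty set on which $m(P)$ is realised. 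Away from the discarded set, $D_P>0$ lets us write $P(x,y)=c\,(x-\rho_1y)(x-\rho_2y)(x-\rho_3y)$ with $c=c_3\neq0$ and $\rho_1<\rho_2<\rho_3$ real; the assignment $(c,\rho_1,\rho_2,\rho_3)\mapsto(c_0,c_1,c_2,c_3)$ is a polynomial diffeomorphism from $(\R\setminus\{0\})\times\{\rho_1<\rho_2<\rho_3\}$ onto $\{D_P>0,\ c_3\neq0\}$ --- its Jacobian equals $\pm c^3\prod_{i<j}(\rho_i-\rho_j)$, nonzero on the domain --- so ``almost every form'' is the same as ``almost every triple $(\rho_1,\rho_2,\rho_3)$, for each $c$''. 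Hence it is enough to produce a full-measure set $G\subseteq\R$ such that $\lvert P\rvert\to\infty$ at infinity whenever $\rho_1,\rho_2,\rho_3\in G$ and $c\neq0$.

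For the Diophantine input, let $G$ be the set of $\rho\in\R$ with $\inf\{\,q^2\lVert q\rho\rVert:q\in\Z,\ \lvert q\rvert\ge Q\,\}\to\infty$ as $Q\to\infty$, where $\lVert\,\cdot\,\rVert$ is the distance to $\Z$. Since this depends only on $\rho\bmod1$, it suffices to see that $G$ has full measure in $[0,1)$. If $Q_m(\rho)\le\lvert q\rvert<Q_{m+1}(\rho)$ then, by the standard best-approximation property of continued fractions (\cite{CasselsBook},\cite{KhinchinTextbook}) together with Lemma \ref{Convergents Approximation}, $\lVert q\rho\rVert\ge\lVert Q_m(\rho)\rho\rVert=\bigl(a_{m+1}(\rho)Q_m(\rho)+Q_{m-1}(\rho)\bigr)^{-1}\ge\bigl(2a_{m+1}(\rho)Q_m(\rho)\bigr)^{-1}$, so $q^2\lVert q\rho\rVert\ge Q_m(\rho)/(2a_{m+1}(\rho))$; thus it is enough that $Q_m(\rho)/a_{m+1}(\rho)\to\infty$ for a.e.\ $\rho$. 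Now $Q_m\ge\phi^{m-1}$ for every $\rho$ (from $Q_m=a_mQ_{m-1}+Q_{m-2}\ge Q_{m-1}+Q_{m-2}$ and the Fibonacci lower bound), whereas, by invariance of the Gauss measure under the Gauss map, $\mathrm{Leb}\{\rho\in[0,1):a_{m+1}(\rho)\ge\phi^{m/2}\}\le\phi^{-m/2}/\log2$. This is summable, so Borel--Cantelli gives $a_{m+1}(\rho)<\phi^{m/2}$ for all large $m$, for a.e.\ $\rho$, whence $Q_m(\rho)/a_{m+1}(\rho)\ge\phi^{m/2-1}\to\infty$.

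Finally, fix $\rho_1,\rho_2,\rho_3\in G$ and $c\neq0$, and suppose towards a contradiction that $\lvert P(x_k,y_k)\rvert\le B$ along some sequence with $\lvert(x_k,y_k)\rvert\to\infty$. Passing to a subsequence, $w_k:=(x_k,y_k)/\lvert(x_k,y_k)\rvert\to u$ for a unit vector $u$. Since $\lvert P(x_k,y_k)\rvert=\lvert(x_k,y_k)\rvert^3\,\lvert P(w_k)\rvert$, $\lvert(x_k,y_k)\rvert\to\infty$, and $P(w_k)\to P(u)$ by continuity, boundedness forces $P(u)=0$, i.e.\ $u=\pm(\rho_i,1)/\sqrt{\rho_i^2+1}$ for some $i$ (as $c_3\neq0$, the direction $u$ is never vertical). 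Then $\lvert y_k\rvert\to\infty$ and $x_k/y_k\to\rho_i$, so for large $k$ one has $\lvert x_k/y_k-\rho_j\rvert\ge\delta:=\tfrac12\min_{r\neq s}\lvert\rho_r-\rho_s\rvert$ for $j\neq i$, and therefore
$$\lvert P(x_k,y_k)\rvert=\lvert c\rvert\,\lvert y_k\rvert^3\,\lvert x_k/y_k-\rho_i\rvert\prod_{j\neq i}\lvert x_k/y_k-\rho_j\rvert\ \ge\ \lvert c\rvert\,\delta^2\,\lvert y_k\rvert^2\,\lvert x_k-\rho_iy_k\rvert\ \ge\ \lvert c\rvert\,\delta^2\,\lvert y_k\rvert^2\,\lVert y_k\rho_i\rVert,$$
whose right-hand side tends to $\infty$ because $\rho_i\in G$ and $\lvert y_k\rvert\to\infty$, contradicting $\lvert P(x_k,y_k)\rvert\le B$. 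Hence $\lvert P(x,y)\rvert\to\infty$ over $\Z^2$, and by the first paragraph $P$ is finitely minimized.

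The only delicate step is the Diophantine lemma: one must use best approximation in the form $\lVert q\rho\rVert\ge\lVert Q_m\rho\rVert$ for $Q_m\le\lvert q\rvert<Q_{m+1}$ --- which controls all $q$, not just convergent denominators --- and then arrange the Borel--Cantelli argument so that the random quantity $a_{m+1}$ is dominated by the deterministic bound $Q_m\ge\phi^{m-1}$; everything else, including the excision of the two null loci at the outset and the change of variables to root coordinates, is routine.
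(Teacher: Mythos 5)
Your proof is correct, and its skeleton is the same as the paper's: establish a full-measure Diophantine condition on the real roots, then run the compactness argument that a sequence $(x_k,y_k)\to\infty$ with $|P(x_k,y_k)|$ bounded must have $x_k/y_k$ converging to a root, which the Diophantine condition forbids. The difference is in how the Diophantine input is obtained and how sharp it is. The paper invokes Khinchin's theorem to place almost every root in some $S_l=\{\theta:\,|\theta-x/y|>1/(ly^{5/2})\ \forall x/y\}$, which yields the quantitative growth $|P(x_k,y_k)|\gg\sqrt{|y_k|}$; you instead isolate the qualitatively minimal condition $\inf_{|q|\ge Q}q^2\|q\rho\|\to\infty$ and prove it has full measure from scratch via best approximations of the second kind, the Fibonacci lower bound $Q_m\ge\phi^{m-1}$, Gauss-measure invariance and Borel--Cantelli. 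Your route is more self-contained (no appeal to Khinchin) but gives no rate; the paper's is shorter and quantitative. You are also more careful than the paper about the transfer from ``almost every form'' to ``almost every root triple'' (the explicit Jacobian $\pm c^3\prod_{i<j}(\rho_i-\rho_j)$) and about excising the null loci $c_3=0$ and the rational-root locus at the outset — steps the paper leaves implicit. One cosmetic quibble: your justification that $u$ is ``never vertical'' is really the statement that $P(\pm1,0)=\pm c_3\neq0$ excludes $u=\pm(1,0)$, which is what you actually need to conclude $|y_k|\to\infty$; the wording could mislead, but the mathematics is right.
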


\begin{proof}

For any positive number $l$, define the set $$S_l=\left\{\theta \in \mathbb{R}\text{, such that for all }x\in\Z,y\in \mathbb{N}: \left\vert\theta-\frac{x}{y}\right\vert>\frac{1}{ly^{\frac{5}{2}}} \right\}.$$By Khinchin's Theorem, $\bigcup_{l\in \mathbb{N}}S_l$ has full measure. Pick some $\rho_1,\rho_2,\rho_3\in S_l$ pair-wise distinct and some $c\in\R$ with $c\neq0$. Consider the form $$P(x,y)=c(x-\rho_1y)(x-\rho_2y)(x-\rho_3y),$$ and assume that $P$ is not finitely minimized. By definition, there exists a sequence of $(x_i,y_i)\rightarrow\infty$ in $\Z^2$ and some absolute constant $M$, not depending on $l$ such that $$\left\vert P\left(x_i,y_i\right)\right\vert=\left\vert y_i^3\left(\frac{x_i}{y_i}-\rho_1\right)\left(\frac{x_i}{y_i}-\rho_2\right)\left(\frac{x_i}{y_i}-\rho_3\right)\right
\vert\leq M. $$ It is not hard to see that $\frac{x_i}{y_i}$ stays in some compact set for all $i$. By possibly passing to a subsequence, assume that $\frac{x_i}{y_i}$ is convergent. Notice that the limit of this sequence must be one of the $\rho_j$'s, say $\rho_1$ without loss of generality. We have: 
$$M\geq\left\vert P(x_i,y_i)\right\vert= \left\vert (1+o_i(1))(\rho_1-\rho_2)(\rho_1-\rho_3)y_i^3\left(\frac{x_i}{y_i}-\rho_1\right)\right\vert\gg\sqrt{\left\vert y_i\right\vert}\rightarrow\infty,$$ by definition of $S_l$, which is a contradiction.

\end{proof} 
By the Thue-Siegel-Roth theorem, we know that every algebraic number is in some $S_l,$ and hence we see that even though binary cubic forms do not admit linear actions in the sense of \ref{Automorhpic}, by Mordell's findings, we know that: 
\begin{corollary}\label{Finitely minimized extremal cubic}
Every extremal lattice associated to the star body of a binary cubic form of non-zero discriminant is finitely minimized.    
\end{corollary}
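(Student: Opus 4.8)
The plan is to argue by contradiction: I would use Theorem~\ref{Mordell Lemma} to transport the question to Mordell's two explicit extremal forms — which crucially have \emph{algebraic} roots — and then show that a non-finitely-minimized extremal lattice produces a real root approximated by rationals to order $y^{-3}$, hence transcendental by Thue--Siegel--Roth, contradicting algebraicity. First, reduce to forms: if $\Lambda$ is extremal for a binary cubic $P$ with $D_P\neq0$, pick a basis $u_1,u_2$ of $\Lambda$ and set $Q(x,y):=P(xu_1+yu_2)$; the change-of-basis matrix lies in $\SL_2(\R)$, so $D_Q=D_P$, and $m(Q)=\inf_{\Z^2\backslash\underline{0}}|Q|=\inf_{\Lambda\backslash\underline{0}}|P|=\overline{P}(\Lambda)=\max\Spec(P)=\max\Spec(Q)$, while $\Lambda$ is finitely minimized for $P$ exactly when $Q$ is finitely minimized in the sense of Definition~\ref{fntly minimized}. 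So it suffices to show that every binary cubic $Q$ with $D_Q\neq0$ and $m(Q)=\max\Spec(Q)$ is finitely minimized.

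Next, suppose some such $Q$ is not finitely minimized. Arguing exactly as in the proof of Proposition~\ref{Finitely Minimized}, there is $(x_i,y_i)\in\Z^2$ with $(x_i,y_i)\to\infty$ and $|Q(x_i,y_i)|\to m(Q)$; writing $Q(x,y)=c(x-\sigma_1y)(x-\sigma_2y)(x-\sigma_3y)$ with the $\sigma_k$ distinct (as $D_Q\neq0$) and comparing $|Q(x_i,y_i)|=|c|\,|y_i|^3\prod_k|x_i/y_i-\sigma_k|$ with its bound shows that $x_i/y_i$ stays in a fixed compact set and, along a subsequence, converges to a real root, say $\sigma_1$, with $|y_i|^3\,|x_i/y_i-\sigma_1|$ tending to $m(Q)/(|c|\,|\sigma_1-\sigma_2|\,|\sigma_1-\sigma_3|)>0$. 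Thus $|x_i/y_i-\sigma_1|\asymp y_i^{-3}$ (in particular, by Lemmas~\ref{Good approximation means convergent} and~\ref{Convergents Approximation}, the $(x_i,y_i)$ are convergents of $\sigma_1$ whose partial quotients tend to infinity), so $\sigma_1$ lies in no $S_l$ and, by the Thue--Siegel--Roth theorem, $\sigma_1$ is transcendental.

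Finally, I would contradict this using extremality. After rescaling $Q$ to discriminant $\pm23$ or $\pm49$ — which alters neither its roots, nor $m(Q)=\max\Spec(Q)$, nor the failure of finite minimization — Theorem~\ref{Mordell Lemma} gives $Q=P_{\mathrm{Mord}}\circ g$ for some $g\in\SL_2(\R)$, with $P_{\mathrm{Mord}}$ Mordell's form of that discriminant and hence with algebraic roots. Since $\Z^2$ is now extremal for $Q$, the lattice $g^{-1}\Z^2$ is extremal for $P_{\mathrm{Mord}}$, so Mordell's determination of the extremal lattices of $P_{\mathrm{Mord}}$ forces $g=\delta^{-1}\gamma^{-1}$ with $\delta\in\SL_2(\Z)$ and $\gamma$ an automorphism of $P_{\mathrm{Mord}}$. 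Automorphisms permute the (algebraic) roots of $P_{\mathrm{Mord}}$, and the integer Möbius map $\delta^{-1}$ preserves algebraicity, so every root of $Q=P_{\mathrm{Mord}}\circ g$, in particular $\sigma_1$, is algebraic — contradicting the previous step, and proving the corollary.

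The first two steps are routine given Proposition~\ref{Finitely Minimized} and the continued-fraction lemmas; the substance is in the last step, and precisely in the input that \emph{Mordell determined the extremal lattices} of his forms, i.e.\ that they are exactly the $\SL_2(\Z)$-translates of $\Z^2$ up to the automorphism group. It is this rigidity, combined with the algebraicity of Mordell's roots and Thue--Siegel--Roth, that upgrades the mere existence of a finitely minimized extremal lattice (the analogue of Theorem~\ref{Automorhpic}) to the statement that \emph{every} extremal lattice is finitely minimized. If one wished to avoid the full classification, the contradiction can instead be obtained by perturbing $\Lambda$ with an $\SL_2(\R)$-element fixing the real-root line $\{x=\sigma_1y\}$ and dilating along it, which strictly increases $|P|$ on the thin cusp containing the escaping vectors while changing it only by a factor $1+O(\epsilon)$ elsewhere, producing a unimodular lattice with $\overline{P}>\max\Spec(P)$; that route is self-contained but requires careful uniform estimates (coupling the perturbation size to the scale at which bounded vectors live) and, when $D_P>0$, simultaneous control of the other two real-root cusps.
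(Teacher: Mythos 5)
Your proposal is correct and follows essentially the same route as the paper, which deduces the corollary from exactly the three ingredients you isolate: Mordell's classification of the extremal lattices (so the associated forms have algebraic roots), the Thue--Siegel--Roth theorem, and the escape-to-infinity analysis from the proof of Proposition~\ref{Finitely Minimized}. You merely run the argument in contrapositive form (a non-attained infimum forces a root of Diophantine exponent at least $3$), whereas the paper argues directly via membership of algebraic numbers in the sets $S_l$; the content is the same.
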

Regarding the continuity of the function $\Lambda\rightarrow m(P\circ\Lambda)$ we have:
\begin{prop} \label{Non-existence of continuous curve}
    Let $P(x,y)$ be a binary form of degree $n$ with non-vanishing discriminant and assume that $P(z,1)=0$ has only real roots. Let $u:[0,1]\rightarrow \SL_2(\R)/\SL_2(\Z)$ denote a continuous map on the space of unimodular lattices and let  $$U:[0,1]\rightarrow \R_{\geq0}, \,\,U(t)= \inf_{(x,y)\in\Z^2\backslash\underline{0}}\vert P\circ u(t)(x,y)\vert.$$ Then, $$U\text{ is continuous}\iff u\text{ is constant.}$$
\end{prop}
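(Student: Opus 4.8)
The plan is to prove the contrapositive in one direction and a direct argument in the other; the reverse implication ($u$ constant $\Rightarrow U$ constant) is trivial, so the content is showing that if $u$ is nonconstant then $U$ fails to be continuous somewhere. The key structural fact is that since $P$ has non-vanishing discriminant and only real roots (so $P(z,1)=0$ has $n$ distinct real roots), at a lattice $\Lambda$ where $U(t_0) = m(P\circ u(t_0)) > 0$ the infimum is attained (by Proposition \ref{Finitely Minimized}-type reasoning, or rather because $U(t_0)>0$ forces $x_i/y_i$ along any minimizing sequence to stay in a compact set away from the roots, hence the minimizing sequence is bounded and the inf is a min). On the other hand, the set of $t$ with $U(t)=0$ is exactly the set where $P\circ u(t)(z,1)=0$ has a rational root, i.e. where the line spanned by some primitive integer vector is mapped by $u(t)$ onto one of the (finitely many, but moving) root lines of $P$; as $t$ varies this happens on a set that is simultaneously dense (because the root directions sweep out an interval and rationals are dense) and has empty interior (since $U$ restricted to any open set where it is positive is, say, genuinely positive on a dense set). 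The contradiction to continuity will come from this dichotomy: a continuous function that vanishes on a dense set must vanish identically, but $U$ cannot vanish identically if $m(P\circ u(t))>0$ for even one $t$.

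First I would set up the root-direction picture: write $\rho_1(t) < \cdots < \rho_n(t)$ for the real roots of $P\circ u(t)(z,1)=0$, which depend continuously on $t$ since $u$ is continuous and the roots of a real polynomial with simple roots depend continuously on the coefficients. Because $u$ is nonconstant, I claim at least one $\rho_j(t)$ is nonconstant: if all root directions were constant, then $u(t)$ would preserve every root line of $P$, and an element of $\SL_2(\R)$ fixing $n\geq 3$ distinct points of $\mathbb{P}^1$ is the identity, so $u(t)$ would be constant modulo $\SL_2(\Z)$ — contradiction. (One must be slightly careful: $u(t)$ is only defined modulo $\SL_2(\Z)$, so "fixing the root lines" means fixing them as a subset of $\mathbb{P}^1(\mathbb{R})$ up to the $\SL_2(\Z)$ ambiguity; a short argument using that $\SL_2(\Z)$ is discrete and the root configuration rigid handles this.) So some $\rho_j(t)$ traces out a nondegenerate interval $I$ as $t$ ranges over $[0,1]$.

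Next I would exploit density. For any rational $p/q \in I$, pick $t$ with $\rho_j(t) = p/q$; then $(p,q)$ (primitive) lies on a root line of $P\circ u(t)$, so $P\circ u(t)(p,q)=0$ and hence $U(t)=0$. Thus $U$ vanishes on a set $Z \subset [0,1]$ whose image under the continuous map $t\mapsto\rho_j(t)$ contains $I\cap\mathbb{Q}$, which is dense in $I$; pulling back, $Z$ meets every subinterval of $[0,1]$ that $\rho_j$ maps into a subinterval of $I$ — and since $\rho_j$ is continuous and nonconstant, $Z$ is dense in $\rho_j^{-1}(I^\circ)$, an open nonempty subset of $[0,1]$. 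If $U$ were continuous, $U$ would therefore vanish on all of $\rho_j^{-1}(\overline{I^\circ})$. But now I invoke that $U(t) > 0$ for a.e. $t$: along $\rho_j^{-1}(I^\circ)$, the other roots $\rho_i(t)$ vary too, and by the Khinchin/$S_l$ argument of Proposition \ref{Finitely Minimized} the set of $t$ for which all of $\rho_1(t),\dots,\rho_n(t)$ are badly approximable (hence $P\circ u(t)$ finitely minimized with positive minimum) has full measure in that interval — provided the map $t\mapsto(\rho_1(t),\dots,\rho_n(t))$ is non-degenerate enough that badly-approximable tuples pull back to a full-measure set. This is the one delicate point: I would argue that since some $\rho_j$ is nonconstant and real-analytic (roots of a polynomial with analytic coefficients, away from branch points, are analytic), $\rho_j^{-1}$ of a null set is null, so $U>0$ on a full-measure subset of $\rho_j^{-1}(I^\circ)$, contradicting $U\equiv 0$ there.

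The main obstacle I anticipate is precisely this last step — transferring the "almost every binary cubic form is finitely minimized" statement from the ambient parameter space of forms to the one-dimensional curve $t\mapsto u(t)$, which a priori could be tangent to all the bad sets. The clean way around it is to not use measure at all: instead observe that a \emph{single} rational value $\rho_j(t_0)=p/q$ gives $U(t_0)=0$, while for $t$ near $t_0$ with $\rho_j(t)$ irrational and badly approximable, $U(t)$ is bounded below by a positive constant depending only on the approximation type — and one can produce such $t$ arbitrarily close to $t_0$ by Thue–Siegel–Roth applied to, say, an algebraic value of $\rho_j$ near $p/q$ (algebraic numbers being dense and every algebraic number lying in some $S_l$). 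Concretely: pick algebraic $\alpha \in I$ with $\rho_j(t_1)=\alpha$ and all $\rho_i(t_1)$ simultaneously in some $S_l$ (possible by perturbing and using that $u(t_1)$ then has algebraic root data, or by a direct density argument on the finitely many roots), giving $U(t_1)\geq c(l)>0$; then take $t_0$ near $t_1$ with $\rho_j(t_0)$ rational, giving $U(t_0)=0$; the oscillation of $U$ between these arbitrarily-close parameter values kills continuity. I would carry out the argument in that order: (1) continuity and nonconstancy of root directions; (2) $n\geq 3$ rigidity forces some $\rho_j$ nonconstant; (3) rational values of $\rho_j$ give zeros of $U$, dense in an interval; (4) algebraic/badly-approximable values of all roots simultaneously give a positive lower bound on $U$, also at parameters dense in that interval; (5) conclude $U$ is not continuous. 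The reverse direction needs no comment.
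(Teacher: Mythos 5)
Your steps (1)--(3) are precisely the paper's argument: parametrize the roots of $P\circ u(t)$ by continuous functions, use that an element of $\SL_2(\R)$ fixing $n\geq 3$ points of $\mathbb{P}^1$ is $\pm I$ to force some root function to be nonconstant near any point where $u$ is nonconstant, and observe that the parameters $t$ at which that root is rational (which accumulate at such a point) satisfy $U(t)=0$. The difference is what happens next. The paper does \emph{not} attempt your step (4). It runs the argument as a contradiction: \emph{assume} there is a $t_0$ with $U(t_0)>0$, arrange that $u$ is nonconstant on every neighborhood of $t_0$, produce $t_i\to t_0$ with $U(t_i)=0$, and conclude $U(t_0)=0$ by continuity of $U$. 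The existence of a point with $U>0$ is thus a hypothesis to be refuted, not something to be proved, and what the proof actually establishes is: if $U$ is continuous and $u$ is nonconstant, then $U\equiv 0$.

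Your step (4) is where the proposal has a genuine gap, and it is not fixable. You need to produce, inside the open set where $U$ vanishes on a dense set, a single $t$ with $U(t)>0$; for this you want all $n$ roots of $P\circ u(t)$ to be simultaneously well approximable-resistant (in some $S_l$, or algebraic). But the tuple $\left(\rho_1(t),\dots,\rho_n(t)\right)$ is constrained to a one-dimensional curve, and making $\rho_j(t_1)$ algebraic says nothing about the other coordinates --- you cannot perturb them independently, and ``algebraic root data for $u(t_1)$'' does not follow from one root being algebraic. Worse, no argument can close this gap, because the implication ``$U$ continuous $\Rightarrow u$ constant'' is false as literally stated: take $P(x,y)=x^3-xy^2$ and $u(t)$ the image of the unipotent $\left(\begin{smallmatrix}1&0\\ t&1\end{smallmatrix}\right)$, which fixes the root $0$ as a M\"obius transformation; then $P\circ u(t)(0,1)=0$ for all $t$, so $U\equiv 0$ is continuous while $u$ is nonconstant. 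The correct statement (and the one the paper's proof actually delivers, and all that is needed later) is the dichotomy ``$U$ continuous $\Rightarrow$ $u$ constant or $U\equiv 0$.'' If you restructure your argument as the paper does --- conditioning on $U(t_0)>0$ rather than trying to prove it --- your steps (1)--(3) already give a complete proof of that dichotomy, and step (4) can be discarded.
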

\begin{proof}
    The reverse direction is clear. Assume the function $U$ is continuous and that $u$ is non-constant. Suppose there exists $t_0\in[0,1]$ such that $U(t_0)>0.$ Without loss of generality we can assume that $u$ is not constant in any open set containing $t_0$. By continuity of $u$, we can choose continuous functions $u_1,u_2,...,u_n:[0,1]\rightarrow \R,$ so that for every $t\in[0,1],$ $\{u_1(t),u_2(t),...,u_n(t)\}$ is the set of roots of $P\circ u(t).$ Then, there exists some $i_0$, such that $u_{i_0}$ is not constant in any open set containing $t_0.$ Therefore, a sequence $t_i\rightarrow t_0$ exists so that $u_{i_0}(t_i)\in\Q.$ It then follows that $U(t_i)=0$ and thus $U(t_0)=0.$ 
\end{proof}

Therefore, as we will see, the interval nature of the spectral set in question is  not a consequence of continuity, but rather an outcome of the chaos in the distribution of continued fractions. The proof of Theorem \ref{D>0} will be concluded in Section \ref{Fixed point perturbations and intervals in the spectrum}. For now, we prove a weaker version of this theorem, which we will later, in Theorem \ref{Density theorem}, generalize to forms of degree $n\geq 4.$

\begin{thm}
    Let $P$ denote a binary cubic form of positive discriminant. Then $$\overline{\Spec(P)}=\left[0,\sqrt[4]{\frac{D_P}{49}}\right].$$
\end{thm}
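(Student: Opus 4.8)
The plan is to prove density of $\Spec(P)$ in $\left[0,\sqrt[4]{D_P/49}\right]$ by exhibiting, for every target value $c$ in this interval, a sequence of unimodular lattices whose normalized infima converge to $c$. By Corollary \ref{Duality} it is equivalent (and more convenient) to fix the lattice $\Z^2$ and vary the form inside the $\SL_2(\R)$-orbit of Mordell's form $P_0(x,y)=x^3+xy^2-2x^2y-y^3$ of discriminant $49$; so I would work with a family $P_T$, $T\in\SL_2(\R)$, and study $m(P_T)=\inf_{\Z^2\setminus\underline 0}|P_T|$. The upper bound $m(P_T)\le\sqrt[4]{D_{P_T}/49}$ is Mordell's theorem, so the content is producing forms with $m(P_T)$ taking (a dense set of) every smaller value.

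First I would set up coordinates adapted to the three real roots $\rho_1,\rho_2,\rho_3$ of $P_0(z,1)=0$: write $P_T(x,y)=c\prod_j(x-\rho_j^{(T)}y)$ and note that, by Proposition \ref{Finitely Minimized} and the argument in its proof, whenever all three roots are badly approximable the infimum is attained and, moreover, the dominant contribution near the infimum comes from integer points $(x,y)$ with $x/y$ an extremely good rational approximation to one of the roots — i.e. a continued-fraction convergent of some $\rho_j^{(T)}$. Thus $m(P_T)$ is essentially governed by
\[
\min_j\ \inf_{p/q\text{ convergent of }\rho_j^{(T)}}\ |c|\,|\rho_j^{(T)}-\rho_k^{(T)}|\,|\rho_j^{(T)}-\rho_\ell^{(T)}|\,q^3\left|\rho_j^{(T)}-\frac pq\right|,
\]
and by Lemma \ref{Convergents Approximation} the factor $q^3|\rho_j^{(T)}-p/q|$ is $q^2/\big(q(a_{N+1}Q_N+Q_{N-1})\big)\approx Q_N/(a_{N+1}Q_N+Q_{N-1})$, which lies between $0$ and $1$ and is controlled entirely by the tail $a_{N+1}(\rho_j^{(T)})=[\alpha_{N+1},\alpha_{N+2},\dots]$ of the continued fraction expansion. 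So the strategy is: choose $T$ (equivalently, choose the roots $\rho_j^{(T)}$, subject to the one constraint that the discriminant equals the prescribed value so that the normalization is fixed) so as to prescribe the continued fraction tails of the roots, and thereby tune the infimum to any desired value in $(0,\sqrt[4]{D_P/49}]$; density rather than surjectivity is all we can hope for at this stage because the above is only an approximation and because $T$ ranges over a set where the relevant quantities vary continuously but the "best convergent" jumps discretely.

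Concretely, to hit a target $c<\sqrt[4]{D_P/49}$ I would start from Mordell's extremal configuration (where $m=\sqrt[4]{D_P/49}$ is attained, say at the vector coming from the convergents of $\rho_1$) and apply a small perturbation $T$ supported so that: (i) the discriminant is unchanged (so we stay in the right orbit and the normalization is automatic), (ii) along the perturbation the convergent-denominators $Q_N(\rho_j^{(T)})$ stay large while a single partial quotient far down the expansion of $\rho_1^{(T)}$ is forced to take a large prescribed value, which drives the corresponding factor $Q_N/(a_{N+1}Q_N+Q_{N-1})$ — and hence $m(P_T)$ — down toward $c$, and (iii) no competing integer point with small $|P_T|$ is created, which is where the badly-approximable hypothesis on the other two roots (and on the far tail of $\rho_1$) is used to rule out spurious small values via the Thue–Siegel–Roth / $S_l$ estimates already invoked in Proposition \ref{Finitely Minimized}. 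Letting the prescribed partial quotient and the depth $N$ run through appropriate sequences produces infima accumulating at $c$; since $c$ was an arbitrary point of the interval, $\overline{\Spec(P)}=\left[0,\sqrt[4]{D_P/49}\right]$, the reverse containment being Mordell's theorem together with the fact that lattices near the cusp give infima near $0$.

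The main obstacle is step (ii)–(iii): controlling \emph{all} integer points simultaneously. It is easy to make one convergent of one root behave as desired, but one must guarantee that the perturbation does not accidentally make $|P_T(x,y)|$ small at some other lattice point — either a convergent of a different root, or a non-convergent point, or a point that was not close to the infimum for $P_0$ but becomes so after perturbing. Handling this requires a quantitative version of the "the infimum is controlled by convergents" principle that is uniform over the (small) family of $T$'s, i.e. a lower bound $|P_T(x,y)|\ge\delta>0$ for all integer points outside an explicitly described finite list, holding uniformly along the perturbation path; establishing such uniform lower bounds — essentially an effective compactness statement for the relevant piece of $\SL_2(\R)/\SL_2(\Z)$ combined with the Diophantine properties of the roots — is the technical heart of the argument, and is presumably what the later sections (on "sufficiently thick" Diophantine sets and the structural theorem) are built to supply.
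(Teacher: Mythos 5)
Your proposal follows essentially the same route as the paper: fix $\Z^2$, vary the form (via duality), keep the roots of Mordell's extremal form except for prescribing one large partial quotient $\alpha_{N+1}\approx c(\rho-\chi)(\rho-\psi)Q_N$ deep in the expansion of the largest root with a badly-approximable tail beyond it, so that the infimum is $\tfrac1c(1+o_N(1))$ while the discriminant is $49(1+o_N(1))$. The one step you defer to "later machinery" — ruling out competing integer points — is in fact handled directly in the cubic case exactly as you suggest: Thue--Siegel--Roth applied to the algebraic roots bounds the partial quotients of the unperturbed initial segments and of $\chi,\psi$, and the explicit tail $1,1,1,\dots$ handles the rest, so only a fixed compact set of lattice points and the single prescribed convergent can compete.
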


\begin{proof}

Let $\rho>\chi>\psi$ denote the real roots of $x^3+x^2-2x-1=0$ and recall that $P(x,y)=x^3+x^2y-2xy^2-y^3$ is the extremal form for the lattice $\Z^2$, as proved by Mordell.  Let the continued fraction sequence of $\rho$ be denoted as $\left[\alpha_0(\rho),\alpha_1(\rho),\alpha_2(\rho),...\right]$ and $\frac{P_N(\rho)}{Q_N(\rho)}$ will be the $N^{\text{th}}$ convergent of $\rho.$ Fix some real $c\geq1 $ and  for any $N\in \mathbb{N}$ define $$\rho(c,N)=\left[\alpha_0(\rho),\alpha_1(\rho),\alpha_2(\rho),...,\alpha_{N}(\rho),\lfloor c(\rho-\chi)(\rho-\psi)Q_N(\rho)\rfloor,1,1,1,...\right].$$ In other, words we construct a real number by defining the first part of the continued fraction sequence to be the same as that of $\rho,$ picking $\alpha_{N+1}$ to be a variable, and then fixing the tail to resemble a badly approximatable number. We now study $m(P_{c,N})$ for $$P_{c,N}(x,y)=(x-\rho(c,N)y)(x-\psi y)(x-\chi y).$$ Since $\rho$ is an algebraic number, we have by the Thue-Siegel-Roth Theorem, that there exists some integer $N_0,$ such that for all $i\geq N_0$: \begin{align}
\alpha_{i+1}(\rho)&\leq \sqrt{Q_{i}(\rho)}.  \label{Diophantine cutting} \end{align} By Lemma \ref{Convergents Approximation},  we know that \begin{align}\left\vert\rho-\frac{P_i(\rho)}{Q_i(\rho)}\right\vert=\frac{1}{Q_{i}(\rho)(a_{i+1}(\rho)Q_i(\rho)+Q_{i-1}(\rho))}. \notag\end{align} Therefore, by Eq.\ref{Diophantine cutting} and Lemma \ref{Good approximation means convergent}, there exists some absolute constant $M$, so that for all $Y>Q_{N_0}$: $$Y^3\left\vert\rho-\frac{X}{Y}\right\vert\geq M\sqrt{Y}.$$ Now, since $\alpha_i(\rho)=\alpha_i(\rho(c,N))$ for $i\leq N-1$, we have an analogous statement for $\rho(c,N)$ by possibly increasing the value of the constant $M$. That is, for $N>N_0$ and for all $c\geq1$:  $$Q_{N}(\rho(c,N))>Y>Q_{N_0}(\rho(c,N))\implies
Y^3\left\vert\rho(c,N)-\frac{X}{Y}\right\vert\geq M\sqrt{Y}.$$ 
Now, since for $j\geq1$, $\alpha_{N+j+1}(\rho(c,N))=1,$ we know again from Lemma \ref{Convergents Approximation} that $$Y> Q_{N}(\rho(c,N))\implies Y^3\left\vert\rho(c,N)-\frac{X}{Y}\right\vert\geq MY.$$

We therefore deduce that by picking $N_0$ large enough, $P_{c,N}$ is finitely minimized by either $Y\leq Q_{N_0}(\rho(c,N))$ or by $Y=Q_{N}(\rho(c,N)).$ Since Eq. \ref{Diophantine cutting}, holds for $\chi$ and $\psi$ as well we deduce that there exists some compact region  $\Omega=[0,T]^2$ (with $T$ an absolute constant), such that 
$$m(P(c,N))=\min\left(\min_{(x,y)\in\Omega\backslash\underline{0}}\left\vert P_{c,N}(x,y)\right\vert,\left\vert P_{c,N}(P_N(\rho(c,N)),Q_N(\rho(c,N)))\right\vert\right).$$

However, we have
$$\min_{(x,y)\in\Omega\backslash\underline{0}}\left\vert P_{c,N}(x,y)\right\vert=\left(1+o_N(1)\right)\min_{(x,y)\in\Omega\backslash\underline{0}}\left\vert P(x,y)\right\vert=1+o_N(1),$$ and

\begin{align*}&P_{c,N}(P_N(\rho(c,N)),Q_N(\rho(c,N)))=(1+o_N(1))(\rho-\chi)(\rho-\psi)\frac{Q_N(\rho(c,N))}{\alpha_{N+1}(\rho(c,N))}=\notag \\&(1+o_N(1))(\rho-\chi)(\rho-\psi)\frac{Q_N(\rho)}{\lfloor c(\rho-\chi)(\rho-\psi)Q_N(\rho)\rfloor}=\frac{1}{c}(1+o_N(1)).\notag\end{align*}

Taking everything together, we have that for $c\geq 1$ $$m(P_{c,N})=\frac{1}{c}(1+o_N(1)).$$

Since the discriminant of $P_{c,N}$ is equal to $49(1+o_N(1)),$ we derive the Theorem.

\end{proof}
\begin{remark} \label{Remark 3}
    Even though we have used the Theorem of Thue-Siegel-Roth to obtain Theorem \ref{D>0}, the Theorem of Thue would suffice as for our purposes any saving on the exponent $3$ would suffice. In fact, we will see later, that even the use of that can be avoided. In that sense, our proof is completely elementary.
\end{remark}

If one is a bit more careful with the crude diophantine cutting of the form $``1,1,1,..."$ that we performed, it is easy to see that we can construct a set of infima that is not simply dense in the spectral set, but also of full measure. We do not pursue this result here, even though it does not require much effort, since we will later give the corresponding ``full measure" result for any binary form of degree $n\geq 3$ and non-zero discriminant. Furthermore, note that we have achieved full density of the spectrum in an aribatry neighborhood of the extremal lattice; that is the lattices that we used to fill the spectral interval where chosen arbitrarily close to the identity. \\

We also note that the family of binary cubic forms we constructed in the proof of Theorem \ref{D>0}, have the property that their largest root lies inside $\Q(\sqrt{5}).$ Of course, we could have chosen any number with Diophantine exponent equal to 2 to perform the $``1,1,1,...,1,..."$ cutting for all the roots. We have proved the following: 
\begin{thm}
For any binary cubic form $P$ and any real quadratic field $\Q({\sqrt{d}})$, define $$\Spec(P,d)=\left\{\inf_{\underline{x} \in\Lambda \setminus \underline{0}}\left\vert P(\underline{x})\right\vert, \Lambda \subset \R^2 \text{ a unimodular lattice such that $P\circ\Lambda$ has roots in $\Q(\sqrt{d})$} \right\}.$$ Then $$\overline{\Spec(P,d)}=\begin{cases}
    \left[0,\sqrt[4]{\frac{D_P}{49}}\right] &\text{for $D_P>0,$ }\\[10pt]
    \left[0,\sqrt[4]{\frac{-D_P}{23}}\right] &\text{for $D_P<0.$ }\\
\end{cases}.$$
\end{thm}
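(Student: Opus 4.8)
The plan is to adapt the proof of Theorem \ref{D>0} verbatim, replacing the golden-ratio-type tail $[\,1,1,1,\dots]$ with a periodic tail corresponding to a generator of the ring of integers (or a suitable quadratic irrational) in $\Q(\sqrt d)$. Concretely, fix a purely periodic continued fraction expansion $\beta = [\,\overline{b_1,b_2,\dots,b_p}\,]$ with $\beta \in \Q(\sqrt d)$; for instance one may take $\beta$ to be (a translate/reciprocal of) the element whose continued fraction is the period of $\sqrt d$ itself, so that $\Q(\beta)=\Q(\sqrt d)$. Such a $\beta$ is a quadratic irrational, hence has Diophantine exponent exactly $2$: there is an absolute constant $M_\beta>0$ with $Y^2\left|\beta - \tfrac{X}{Y}\right|\geq M_\beta$ for all rationals $X/Y$. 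This is precisely the property that was used for the tail in the proof of Theorem \ref{D>0}; the constant $1$ there was only special in that $[\,1,1,\dots]$ is the golden ratio, a badly approximable number, and any badly approximable number works equally well.

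First I would set up, for each root $\rho$ (in the $D_P>0$ case, the largest root $\rho$ of $x^3+x^2-2x-1$, and in the $D_P<0$ case the analogous real root), the perturbed number
\[
\rho(c,N)=\bigl[\alpha_0(\rho),\dots,\alpha_N(\rho),\,\lfloor c\,A_\rho\,Q_N(\rho)\rfloor,\,\overline{b_1,\dots,b_p}\,\bigr],
\]
where $A_\rho$ is the appropriate product of differences of the roots (so that the value of the form at the convergent $(P_N,Q_N)$ comes out to $\tfrac1c(1+o_N(1))$, exactly as in Theorem \ref{D>0}). Next I would run the same Diophantine-cutting argument: Thue--Siegel--Roth (or merely Thue, per Remark \ref{Remark 3}) applied to the algebraic number $\rho$ gives, for $N_0$ large, the lower bound $Y^3|\rho - X/Y|\geq M\sqrt Y$ for $Q_{N_0}<Y<Q_N(\rho(c,N))$; and the periodic tail gives $Y^3|\rho(c,N) - X/Y|\geq M'Y$ for $Y>Q_N(\rho(c,N))$, using the quadratic-irrationality bound on $\beta$ instead of the golden-ratio bound. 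Hence $P_{c,N}(x,y)=(x-\rho(c,N)y)(x-\psi y)(x-\chi y)$ (with $\psi,\chi$ also chosen in $S_l$, e.g.\ quadratic irrationals in the same field, or just kept as the original roots since \eqref{Diophantine cutting} holds for them too) is finitely minimized on a fixed compact box $\Omega$ together with the single point $(P_N,Q_N)$, and one reads off
\[
m(P_{c,N})=\min\Bigl(1+o_N(1),\ \tfrac1c(1+o_N(1))\Bigr)=\tfrac1c(1+o_N(1)),\qquad D_{P_{c,N}}=49(1+o_N(1)).
\]
Letting $c$ range over $[1,\infty)$ and $N\to\infty$ fills $\left[0,\sqrt[4]{D_P/49}\,\right]$ densely, and the forms so constructed have all three roots in $\Q(\sqrt d)$ after applying Mordell's $\SL_2(\R)$-equivalence (Theorem \ref{Mordell Lemma}) to move $D_{P_{c,N}}$ to $D_P$; note the equivalence is by a matrix with algebraic entries over $\Q(\sqrt d)$ since it is determined by matching roots, so it preserves the field of definition of the roots. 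The $D_P<0$ case is identical, with $23$ in place of $49$ and the root data of $x^3-x-1$.

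The one genuine point requiring care — the main obstacle — is verifying that the linear change of variables produced by Theorem \ref{Mordell Lemma} (which makes $D_{P_{c,N}}$ exactly equal to $D_P$) does not knock the roots out of $\Q(\sqrt d)$. Since that matrix is uniquely determined (up to the stabilizer of the form) by sending an ordered triple of roots to an ordered triple of roots, and both triples lie in $\Q(\sqrt d)$, its entries are rational functions of those roots and hence lie in $\Q(\sqrt d)$; so the transformed form still has all roots in $\Q(\sqrt d)$. Alternatively, and more cleanly, one can avoid Theorem \ref{Mordell Lemma} entirely by simply rescaling $(x,y)\mapsto(\lambda x,\lambda y)$ and noting $D$ scales by $\lambda^{6}$ while $m(P)$ scales by $\lambda^{3}$, so $m(P)/\sqrt[4]{|D|}$ is scale-invariant and the family $\{P_{c,N}\}$ already realizes a dense subset of $\bigl[0,\sqrt[4]{|D_P|/49}\,\bigr]$ after the trivial normalization $D_{P_{c,N}}\to D_P$; the roots of a rescaled form differ from the original roots by the rational factor $\lambda$ up to a shift only if one also translates, so one keeps the roots in $\Q(\sqrt d)$ provided $\lambda\in\Q$ — which can be arranged since $D_{P_{c,N}}=49(1+o_N(1))$ and we only need density. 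Beyond this bookkeeping, everything is a word-for-word transcription of the argument already given for Theorem \ref{D>0}.
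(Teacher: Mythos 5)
Your core construction is exactly the paper's: the theorem is stated there as an immediate consequence of the proof of the density version of Theorem \ref{D>0}, observing that the tail $[1,1,1,\dots]$ was only used for its bounded partial quotients and places the perturbed root in $\Q(\sqrt5)$, so replacing it by the periodic tail of a quadratic irrational generating $\Q(\sqrt d)$ --- applied to \emph{all} the real roots --- gives the general statement. Your identification of bounded partial quotients as the only property of the tail that is used, and your choice of a purely periodic $\beta$ with $\Q(\beta)=\Q(\sqrt d)$ (so that $\rho(c,N)$, being an integral M\"obius image of $\beta$, lies in $\Q(\sqrt d)$), is correct and is the whole point.

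Two pieces of your bookkeeping should be cleaned up, though neither is a real gap. First, the option of keeping $\psi,\chi$ as the original cubic irrationals must be discarded: then $P_{c,N}\circ\Lambda$ would not have all its roots in $\Q(\sqrt d)$, which is the defining constraint of $\Spec(P,d)$; you must perform the same periodic-tail cutting on $\psi$ and $\chi$ at level $N$ (so they stay within $o_N(1)$ of the originals and the discriminant remains $49(1+o_N(1))$). Second, the ``main obstacle'' you isolate is not one, and your treatment of it is internally inconsistent: Theorem \ref{Mordell Lemma} is an $\SL_2(\R)$-equivalence and therefore \emph{preserves} the discriminant, so it cannot ``move $D_{P_{c,N}}$ to $D_P$''; and the field of definition of the entries of the equivalence matrix is irrelevant, since the definition of $\Spec(P,d)$ constrains only the roots of $P\circ\Lambda$, which are by construction the roots of $G$. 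The correct normalization is to multiply the form by the constant $(|D_P|/|D_{P_{c,N}}|)^{1/4}$ (which fixes the roots, rescales $m$ by the same factor, and makes the discriminant exactly $D_P$), and only then invoke Theorem \ref{Mordell Lemma} to realize $G$ as $P\circ\Lambda$ for a unimodular $\Lambda$. Your alternative via $(x,y)\mapsto(\lambda x,\lambda y)$ also has the wrong exponent: this scales the coefficients by $\lambda^3$ and hence $D$ by $\lambda^{12}$, not $\lambda^6$ (with $\lambda^6$ the ratio $m/|D|^{1/4}$ would not be invariant), and it does not move the roots at all, so the proviso $\lambda\in\Q$ is unnecessary.
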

We stated the theorem for any binary cubic form, since the proof of Theorem \ref{D>0} could have also been applied for the case of $D<0$. \\

\section{The sets $B_{\epsilon}$ and $E^{\eta}$}
\label{The sets}

We now turn our focus onto the case of binary forms of higher degree. The situation for forms of degree $n\geq4$ differs to the cases already discussed, since we have no analogue of Lemma \ref{Mordell Lemma}. For every $n\in\mathbb{N}$ the ring of invariants under the action of $\SL_2$ is a finitely generated graded algebra over the base field of the complex numbers, as proved by Gordan (\cite{InvariantTheory}). However, for $n\geq4$ this ring is not generated by the discriminant alone and hence there are uncountably many $\SL_2(\R)-$orbits. \\

In proving Theorem \ref{D>0}, we used our knowledge of the extremal lattice and specifically the fact that the roots of the extremal form are algebraic. Even though we used the theorem of Thue-Siegel-Roth, we are not essentially using the fact that the extremal form is algebraic, but simply that its roots have Diophantine exponent less than $3$.

\begin{defn}
    Let $x\in\R$. The Diophantine exponent of $x$ is  
    $$E(x)=\sup\{r:\exists \,C>0 \,\, \text{s.t.}\,\,\left\vert x-\frac{p}{q}\right\vert\leq\frac{C}{q^r}\text{ for infinitely many}\, (p,q)\in\Z\times\mathbb{N}\}.$$ We will also be denoting $E_s$ the set of real numbers with Diophantine exponent $s$. Since for $n\geq4,$ almost all $\SL_2(\R)-$orbits do not contain rational representatives, forms with roots in $E_2$ will be the analogue of the extremal forms with good Diophantine properties discovered by Mordell. 
\end{defn} We can express $E(x)$ as a function of the continued fraction sequence of $\alpha$: 
$$E(x)=2+\limsup\frac{\log\alpha_N(x)}{\log Q_N(x)}.$$
We say \begin{itemize}
    \item $x$ is sub-critical if $E(x)<n$,
    \item $x$ is critical if $E(x)=n$,
    \item $x$ is super-critical if $E(x)>n$.
\end{itemize} 
Of course whether some $x\in\R$ is sub-critical, critical or super-critical depends on the degree $n$ we are discussing. However, this will be clear from the context and hence we omit the dependence. If $P$ is some binary form with a super-critical distinct real root $\rho$, then 

$$\inf_{(x,y)\in\Z\times\mathbb{N}}\left\vert P(x,y)\right\vert\ll\limsup \left\vert y^n(\frac{x}{y}-\rho)\right\vert=0. $$

If we wanted to summarize in a line, why the spectrum of binary $n$-forms for $n\geq3$ is so different from the Markoff Spectrum, it is because for $n=2$, there are no sub-critical real numbers.\\ \\
We need some more definitions regarding Diophantine properties of real numbers:
\begin{defn}
Let $\rho\in\R$ and $\eta>0.$ We define $E^{\eta}(\rho)$ to be the set of points $x$ such that $$\left\vert\frac{X}{Y}-x\right\vert<\frac{1}{Y^{2+\eta}}\implies \left\vert\frac{X}{Y}-\rho\right\vert<\frac{2}{Y^{2+\eta}}.$$
\end{defn}
The definition of $E^{\eta}(\rho)$ is a quantitative way to describe the real points with sequence of convergents almost containing the sequence of convergents of $\rho$. \\ \\We make the following conventions for the rest of the paper:\\ \\
\textit{Notational convention:} From now on, we assume that all discussed binary forms are homogeneous of degree $n\geq 3$ and have non vanishing discriminant. By ``roots of $P$" we will be referring to the roots of the equation $P(z,1)=0.$ These will be denoted by $\rho_i$, $i\in\{1,2,...,n\}$, where the first $k$ of them are real and the rest are complex coming in conjugate pairs. Since the case of $k=0$ has already been discussed, we assume that there exists at least one real root. We denote the largest one by $\rho_1$ and we can assume that it is positive, since we can always conjugate with some lattice in $\SL_2(\mathbb{Z}).$
\begin{defn}
Let $P$ denote some binary form of degree $n\geq 3$ with $k$ distinct real roots $\rho_{i},$ $i=1,2,...,k$ and denote by $\Lambda_0$ an extremal lattice. For some $\epsilon>0$, we call a lattice $\Lambda\subset\R^2$ $\epsilon-$almost extremal if 
    \begin{itemize}
        \item $\vert\vert \Lambda-\Lambda_0\vert\vert<\epsilon$ and \item$$\inf_{(X,Y)\in\Z\times\mathbb{N}}\left\vert Y^n(\frac{X}{Y}-\rho_{\Lambda,i})\right\vert>\inf_{(X,Y)\in\Z\times\mathbb{N}}\left\vert Y^n(\frac{X}{Y}-\rho_i)\right\vert-\epsilon,$$ for $i\in\{1,2,...,k\},$ where $\rho_{\Lambda,i}$ are the real roots of $P\circ\Lambda.$
    \end{itemize} 
We denote the set of $\epsilon-$almost extremal lattices by $\text{AEL}(\epsilon).$
\end{defn}
Notice that the absence of a spectral gap is equivalent to the statement that for every $\epsilon,$ AEL($\epsilon$) contains non-extremal lattices.
\begin{defn}
Let $P$ denote some binary form of degree $n\geq 3$ with $k$ distinct real roots $\rho_{i},$ $i=1,2,...,k$ and $X\subset\R^k.$ For any $\eta,\epsilon>0,$ we define AEL$(\epsilon,X,\eta)$ the set of those $T\in\SL_2(\R)$ such that:  
    \begin{itemize}
    \item $T\in\text{AEL}(\epsilon),$ where we identify $T$ with its corresponding lattice,
        \item $\left(T(\rho_1),T(\rho_2),...,T(\rho_k)\right)\in X$,
        \item $T(\rho_i)\in E^{\eta}(\rho_i).$
    \end{itemize} 
\end{defn}

The proof of the following theorem will be completed in Section \ref{The structural theorem and density of approximant lattices}.  
\begin{thm}\label{Flexibility of the extremal lattice}
Let $P$ denote some binary form of degree $n\geq 3$ with $k$ distinct real roots $\rho_{i},$ $i=1,2,...,k$. Let  $X_i\subset\R$ and set  $$a_i=\underline{\lim}_{\epsilon\rightarrow0}\frac{\lambda(B(\rho_i,\epsilon)\bigcap X_i)}{\lambda(B(\rho_i,\epsilon))}.$$ For every $\eta,\epsilon>0$, we have $$\overline{\lim}_{\epsilon_2\rightarrow0}\frac{\mu(\text{AEL}(\epsilon,X_1\times X_2\times...\times X_K,\eta))}{\mu(B(id_2,\epsilon))}\geq 1-k(1-\min_i{a_i}).$$
Here, we denote the Euclidean ball of radius $\epsilon$ around $\rho$ by $B(\rho,\epsilon)$ and the Haar measure on $\SL_2(\R)$ by $\mu.$
\end{thm}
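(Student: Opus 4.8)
The plan is to build, for a fixed binary form $P$ with real roots $\rho_1,\dots,\rho_k$, a family of perturbations $T\in\SL_2(\R)$ close to the identity that simultaneously (i) moves each root $T(\rho_i)$ into the thick set $X_i$, (ii) forces $T(\rho_i)$ to land in the Diophantine box $E^{\eta}(\rho_i)$, and (iii) keeps each root's minimal value $\inf_{(X,Y)}|Y^n(X/Y-T(\rho_i))|$ within $\epsilon$ of that of $\rho_i$. The mechanism is the same as in the proof of Theorem \ref{D>0}: one intervenes deep in the continued-fraction expansion of $\rho_i$. Concretely, for a large index $N$ one keeps the partial quotients $\alpha_0,\dots,\alpha_N$ of $\rho_i$, chooses $\alpha_{N+1}$ to be a large integer parameter, and appends a tail of bounded (say all-$1$) partial quotients. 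Because $\rho_i$ itself is critical or sub-critical (this is automatic for the extremal roots by Thue–Siegel–Roth, hence the $E^\eta$ hypothesis), the truncation does not create new small values: the only candidate infimum beyond the bounded-height ones is the convergent $P_N/Q_N$, whose contribution is $(1+o_N(1))\cdot(\text{const})\cdot Q_N/\alpha_{N+1}$. Letting $\alpha_{N+1}$ range over the admissible integers gives a continuum of possible values of the root-infimum densely filling an interval down from $\inf_{(X,Y)}|Y^n(X/Y-\rho_i)|$, and one selects the subfamily achieving the target value within $\epsilon$.

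The first step I would carry out is to package the above into a parametrized map: fix $N$ large, and for a vector $\underline{t}=(t_1,\dots,t_k)$ in a small cube, let $\rho_i(\underline{t},N)$ be the real number whose continued fraction agrees with $\rho_i$ up to index $N$, has $(N+1)$-st partial quotient $\lfloor\text{(affine function of }t_i)Q_N(\rho_i)\rfloor$, and then all ones. The assignment $\underline{t}\mapsto\big(\rho_1(\underline{t},N),\dots,\rho_k(\underline{t},N)\big)$ is, up to $o_N(1)$ errors, a diffeomorphism onto a small neighborhood of $(\rho_1,\dots,\rho_k)$ — essentially because perturbing $\alpha_{N+1}$ by $O(1)$ moves the real number by $\Theta(1/Q_N^2)$ and the range of $\alpha_{N+1}$ we allow has width $\Theta(Q_N)$, so the image neighborhood has radius $\Theta(1/Q_N)$ and we may rescale. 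Next I would invoke Theorem \ref{Mordell Lemma}'s higher-degree analogue is unavailable, so instead I use that $\SL_2(\R)$ acts transitively on $k$-tuples of distinct reals (via the roots): there is a unique $T=T_{\underline{t},N}\in\SL_2(\R)$ sending $\rho_i\mapsto\rho_i(\underline{t},N)$, and $T_{\underline{t},N}\to\mathrm{id}$ as $N\to\infty$ uniformly in $\underline t$ in the cube, so for $N$ large these $T$'s all lie in $B(\mathrm{id}_2,\epsilon)$, giving the first condition of AEL$(\epsilon)$.

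The measure estimate is then a change-of-variables computation. The pushforward of Lebesgue measure on the $\underline t$-cube under $\underline t\mapsto (\rho_i(\underline t,N))_i$ is, up to $(1+o_N(1))$, Lebesgue measure on the product of balls $\prod_i B(\rho_i,\delta)$ for an appropriate $\delta=\delta(N)\to 0$; and the map $T\mapsto(T(\rho_i))_i$ locally identifies Haar measure near $\mathrm{id}_2$ with (a smooth density times) Lebesgue on the product of root-lines. Hence the Haar measure of the set of $T_{\underline t,N}$ whose image tuple lands in $X_1\times\cdots\times X_k$ is, after dividing by $\mu(B(\mathrm{id}_2,\epsilon))$ and taking $N\to\infty$, at least $\prod_i a_i$ shrunk by the usual union bound: $\mathrm{Leb}(\prod_i(B(\rho_i,\delta)\cap X_i))\ge (1-\sum_i(1-a_i))\prod_i\mathrm{Leb}(B(\rho_i,\delta))$ up to $o(1)$, which gives the claimed $1-k(1-\min_i a_i)$ after bounding $\sum_i(1-a_i)\le k(1-\min_i a_i)$. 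Two points must be checked alongside: that the target root-infima can be hit (forcing membership in AEL$(\epsilon)$, the second bullet) — handled by the $\alpha_{N+1}$ density argument above, choosing the affine function's slope so the attainable interval contains the desired value; and that $\rho_i(\underline t,N)\in E^\eta(\rho_i)$ — this holds because the appended all-$1$ tail makes $\rho_i(\underline t,N)$ badly approximable beyond index $N+1$, so any $X/Y$ with $|X/Y-\rho_i(\underline t,N)|<Y^{-2-\eta}$ has $Y\le O(Q_N)$, forcing $X/Y$ to be a convergent of $\rho_i$ as well (they share convergents up to index $N$), whence $|X/Y-\rho_i|$ is also $<2Y^{-2-\eta}$ once $N$ is large.

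I expect the main obstacle to be the \emph{uniformity} of the $o_N(1)$ estimates across all three requirements at once: one needs a single $N$ (depending on $\epsilon,\eta$ but not on $\underline t$) for which the perturbation is $\epsilon$-small, the root-infimum is controlled to within $\epsilon$ by the two competing quantities (bounded-height values, which are stable under small perturbation of the roots, versus the single convergent $P_N/Q_N$), and the $E^\eta$-membership holds — and simultaneously the change-of-variables Jacobians are within $(1\pm o(1))$. The bounded-height part requires knowing that a \emph{compact} region $\Omega$ captures all but the $P_N/Q_N$ candidate uniformly over the family, which in turn rests on $\rho_1$ (and the other real roots of the extremal form) being sub-critical or critical; this is where the hypothesis $T(\rho_i)\in E^\eta(\rho_i)$ is genuinely used, and pinning down the exact dependence of $\Omega$ and of $N_0$ on $\eta$ is the technical heart of the argument.
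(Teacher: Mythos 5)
There is a genuine gap, in fact two. First, your construction prescribes the images of all $k$ real roots independently: you posit ``a unique $T=T_{\underline{t},N}\in\SL_2(\R)$ sending $\rho_i\mapsto\rho_i(\underline{t},N)$'' for a $k$-dimensional cube of parameters. But $\SL_2(\R)$ is $3$-dimensional and acts on the roots by M\"obius transformations, so the image of $T\mapsto(T(\rho_1),\dots,T(\rho_k))$ is a $3$-dimensional submanifold of $\R^k$; for $k\geq 4$ (which occurs already for quartic forms with four real roots) almost no tuple $(\rho_i(\underline t,N))_i$ is realized by any $T$, and your product/union-bound computation over $\prod_i B(\rho_i,\delta)$ collapses. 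This is precisely the obstruction the paper's proof is built to circumvent: Proposition \ref{algorithm} never prescribes the root images independently but instead runs an iterative refinement through the transfer maps $\Pi_{i,j}$, using the dichotomy of the Structural Theorem \ref{Positive Proportion} (either an interval already has a $(1-\tau_1)$-proportion of good points, or it contains a definite-proportion subinterval with a $(1-\tau_2)$-proportion) to shrink the parameter interval at most $k$ times while keeping control of all coordinates at once.

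Second, even in each single coordinate your family is too thin to give a measure bound. With the tail of partial quotients frozen to all ones, $\rho_i(\underline t,N)$ takes only countably many values as $t_i$ varies (the floor of an affine function composed with a fixed tail), so the set of transformations you produce is Haar-null and cannot witness $\mu(\mathrm{AEL})/\mu(B(id_2,\epsilon_2))\geq 1-k(1-\min_i a_i)$; nor can a countable set be guaranteed to meet a set $X_i$ known only to have lower density $a_i$. The paper instead imposes the positive-measure tail condition $\alpha_{N+i+1}(x)<Q_{N+i}^{\eta}(x)$ (Lemma \ref{Measure of Cutting}), which carves out the sets $S_{\rho,\epsilon,N,h,\eta}$ of Lemma \ref{S-Lemma} occupying a definite proportion of a cylinder set. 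Relatedly, your argument tacitly assumes that $B_{\epsilon}(\rho_i)\cap E^{\eta}(\rho_i)$ has density close to $1$ in small neighborhoods of $\rho_i$; the paper points out explicitly that the lower density of $\{x:m(x)>m(\rho)-\delta\}$ at $\rho$ is $0$, which is exactly why the subinterval alternative in Theorem \ref{Positive Proportion} is unavoidable. Your final union bound $1-\sum_i(1-a_i)\geq 1-k(1-\min_i a_i)$ is the right shape of the answer, but the route to it does not survive these two objections.
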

For our purposes, we will use Theorem \ref{Flexibility of the extremal lattice} only for $X_i\subset\R$ conull sets. We see, by Theorem \ref{Flexibility of the extremal lattice}, that, unlike the case of indefinite quadratic forms, for forms of higher degree, we have a lot of flexibility near our extremal lattices. As an immediate corollary, we obtain: 
\begin{corollary}
    For any binary form $P$ of degree $n\geq3$ and non-zero discriminant, $\Spec(P)$ does not have an isolated maximum.
\end{corollary}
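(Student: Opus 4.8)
The plan is to derive the corollary directly from Theorem \ref{Flexibility of the extremal lattice} by a measure-theoretic argument. Fix a binary form $P$ of degree $n\geq 3$ with non-zero discriminant. If $P$ is $\R$-anisotropic, Theorem \ref{Positive definite} already gives $\Spec(P)=(0,M_P]$, so the maximum $M_P$ is not isolated; hence we may assume $P$ is $\R$-isotropic with $k\geq 1$ distinct real roots $\rho_1,\dots,\rho_k$ (distinctness is forced by $D_P\neq 0$), and an extremal lattice $\Lambda_0$ exists by Proposition \ref{existence of extremal lattice}. Suppose toward a contradiction that $M_P=\max\Spec(P)$ is isolated, i.e.\ there is $\delta>0$ such that $\Spec(P)\cap(M_P-\delta,M_P)=\varnothing$.

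First I would fix the target sets to be conull: take $X_i=\R$ for every $i$, so that $a_i=1$ and Theorem \ref{Flexibility of the extremal lattice} yields, for any $\eta>0$ and any $\epsilon>0$,
\[
\overline{\lim}_{\epsilon_2\to 0}\frac{\mu(\mathrm{AEL}(\epsilon,\R^k,\eta))}{\mu(B(id_2,\epsilon))}\geq 1,
\]
so in particular $\mathrm{AEL}(\epsilon,\R^k,\eta)$, and a fortiori $\mathrm{AEL}(\epsilon)$, is non-empty and indeed has positive Haar measure in every neighbourhood of the identity. The key point is then the observation already recorded in the excerpt: the absence of a spectral gap is equivalent to $\mathrm{AEL}(\epsilon)$ containing non-extremal lattices for every $\epsilon$. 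Concretely, I would choose $\epsilon<\delta$ and pick $\Lambda\in\mathrm{AEL}(\epsilon)$ that is not extremal; by the defining inequality of $\mathrm{AEL}(\epsilon)$, the quantity $m(P\circ\Lambda)=\inf_{\Z^2\setminus\underline 0}|P\circ\Lambda|$ satisfies $m(P\circ\Lambda)>m(P\circ\Lambda_0)-\epsilon=M_P-\epsilon>M_P-\delta$ (using that each real root contributes its infimum and the complex-root factors are bounded below on the relevant compact region, so the root-wise estimate controls $m$); combined with $m(P\circ\Lambda)\leq M_P=\max\Spec(P)$ and $m(P\circ\Lambda)\neq M_P$ (as $\Lambda$ is not extremal and, by Corollary \ref{Finitely minimized extremal cubic} and its higher-degree analogue, extremality is equivalent to achieving the maximum among finitely minimized forms), this places a point of $\Spec(P)$ strictly inside $(M_P-\delta,M_P)$, contradicting the gap assumption.

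The one genuine subtlety is producing a \emph{non-extremal} member of $\mathrm{AEL}(\epsilon)$: positivity of measure alone would suffice if the set of extremal lattices had measure zero, but more robustly one argues that if every lattice in $\mathrm{AEL}(\epsilon)$ were extremal then, by the density statement of Theorem \ref{Flexibility of the extremal lattice} together with Proposition \ref{Non-existence of continuous curve} (the function $\Lambda\mapsto m(P\circ\Lambda)$ cannot be locally constant unless $\Lambda$ is, since arbitrarily nearby lattices make some real root rational and hence $m=0$), one reaches an immediate contradiction: a full-measure family of lattices near $\Lambda_0$ all attaining the maximal value $M_P$ would force $m(P\circ\Lambda)$ to be identically $M_P$ on a set of positive measure, impossible by the rational-root density. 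This is the step I expect to require the most care, as it is where one must translate the measure-theoretic flexibility of Theorem \ref{Flexibility of the extremal lattice} into an honest perturbed lattice whose infimum is both large (close to $M_P$) and strictly smaller than $M_P$; everything else is bookkeeping with the definitions of $\mathrm{AEL}(\epsilon)$ and $\Spec(P)$.
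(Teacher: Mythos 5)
Your overall architecture is the one the paper intends: apply Theorem \ref{Flexibility of the extremal lattice} with conull $X_i$ to get that $\mathrm{AEL}(\epsilon)$ has positive Haar measure near the identity, extract a \emph{non-extremal} member, use the root-wise Diophantine condition (via Lemma \ref{B relevance}) to see that this member contributes a spectral value in $(M_P-C\epsilon,M_P)$, and let $\epsilon\to0$. The paper states the corollary without proof precisely because it regards this as immediate, and its earlier remark that ``absence of a spectral gap is equivalent to $\mathrm{AEL}(\epsilon)$ containing non-extremal lattices'' is exactly the reduction you perform. However, the step you yourself single out as the subtle one is where your argument fails. You claim that if every lattice in $\mathrm{AEL}(\epsilon)$ were extremal, then $m(P\circ\Lambda)\equiv M_P$ on a set of positive measure, and that this is ``impossible by the rational-root density.'' That is not a valid deduction: the lattices for which some $T(\rho_i)$ is rational form a dense \emph{null} set, and nothing prevents a function from vanishing on a dense null set while equalling $M_P$ on a disjoint set of positive measure. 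Proposition \ref{Non-existence of continuous curve} cannot be invoked here either; it is a statement about continuity of $U$ along a continuous curve $u$, not about level sets of positive measure. The most one gets from rational-root density together with upper semi-continuity of $\Lambda\mapsto m(P\circ\Lambda)$ is that $\{m\geq c\}$ is closed and nowhere dense for each $c>0$; nowhere dense sets can perfectly well have positive measure (and $\{m\geq c\}$ in fact does, for small $c$). So the contradiction you need is not established as written.

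The gap can be closed by showing that the set of extremal lattices is Haar-null, which is what actually licenses the passage from ``$\mathrm{AEL}(\epsilon)$ has positive measure'' to ``$\mathrm{AEL}(\epsilon)$ contains a non-extremal lattice.'' By the degree-$n$ analogue of Proposition \ref{Finitely Minimized}, almost every $T\in\SL_2(\R)$ has $P\circ T$ finitely minimized, so up to a null set every extremal $T$ satisfies $\vert P\circ T(v_0)\vert=M_P$ for some $v_0\in\Z^2\setminus\underline{0}$. For fixed $v_0$, the map $T\mapsto P\circ T(v_0)=P(T^{-1}v_0)$ is a non-constant polynomial on the three-dimensional group $\SL_2(\R)$ (the orbit of $v_0$ is all of $\R^2\setminus\underline{0}$ and $\deg P\geq3$), so its level set at $\pm M_P$ is a proper algebraic subvariety, hence null; the countable union over $v_0$ is null. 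With that in hand the rest of your bookkeeping goes through, with two minor corrections: the lower bound from the definition of $\mathrm{AEL}(\epsilon)$ is $m(P\circ\Lambda)>M_P-C\epsilon$ with $C=O_P(1)$ coming from Lemma \ref{B relevance} (not $M_P-\epsilon$ on the nose), which is harmless after shrinking $\epsilon$; and Corollary \ref{Finitely minimized extremal cubic} only says that extremal lattices of cubic forms are finitely minimized — it is not the equivalence you attribute to it, and it is not needed once the null-set argument above is in place.
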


We devote the rest of this section to building up machinery for the proof of Theorem \ref{Flexibility of the extremal lattice}. 

\begin{defn}
    For any $\rho\in\R,$ we set $$m(\rho)=\inf_{(X,Y)\in\Z\times\mathbb{N}}\left\vert Y^n(\frac{X}{Y}-\rho)\right\vert.$$ We say $\rho_1$ is $\epsilon-$Diophantine larger than $\rho_2$, if $$m(\rho_1)>(1-\epsilon)m(\rho_2).$$
    Denote by $B_{\epsilon}(\rho)$ the real numbers $\epsilon-$Diophantine larger than $\rho$. 
\end{defn}

The following Lemma is the reason we are interested in the distribution of $B_{\epsilon}(\rho)$.

\begin{lemma} \label{B relevance}
Let $P_1,P_2$ be binary forms of degree $n$ of the same discriminant with roots denoted by $\rho_{1,i},\rho_{2,i}$ for $i\in\{1,2,...,n\}$. Fix $\epsilon>0.$ If \begin{enumerate}
    \item $$\rho_{1,i}\in B_{\epsilon}(\rho_{2,i})\bigcap B(\rho_{2,i},\epsilon),$$ for $i\in\{1,2,...,k\}$
    \item $$\rho_{1,i}\in B(\rho_{2,i},\epsilon),$$ for $i\in\{k,k+1,...,n\},$
\end{enumerate}  then $$m(P_1)\geq m(P_2)-\epsilon\cdot O_{P_1,P_2}(1),$$ where $O_{P_1,P_2}(1)$ depends only on the size of the coefficients of $P_1$ and $P_2$.
\end{lemma}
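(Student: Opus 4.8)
The plan is to compare the values $P_1(X,Y)$ and $P_2(X,Y)$ directly at each integer point $(X,Y) \in \Z \times \mathbb{N}$, using the factorizations $P_j(X,Y) = c\prod_{i=1}^n (X - \rho_{j,i} Y)$ (both have the same leading coefficient $c$ up to sign, since they share a discriminant and, after the reductions in the paper, may be taken in the same $\SL_2$-orbit; in any case the leading coefficients are comparable and the factor $c$ is absorbed into the $O_{P_1,P_2}(1)$). Write $t = X/Y$. We need to show that whenever $|P_2(X,Y)|$ is close to $m(P_2)$, the value $|P_1(X,Y)|$ is not much smaller, and separately that points $(X,Y)$ where $|P_1(X,Y)|$ is genuinely small cannot drag the infimum below $m(P_2) - \epsilon \cdot O(1)$.

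First I would dispose of the ``bulk'' of integer points. For the complex roots $\rho_{j,i}$, $i > k$, the quadratic factors $(X - \rho_{1,i}Y)(X-\overline{\rho_{1,i}}Y)$ and $(X-\rho_{2,i}Y)(X-\overline{\rho_{2,i}}Y)$ are positive definite in $(X,Y)$, and since $\rho_{1,i} \in B(\rho_{2,i},\epsilon)$ they differ multiplicatively by $1 + O(\epsilon)\cdot\frac{|X|+|Y|}{|X-\rho_{2,i}Y|+\mathrm{Im}(\rho_{2,i})|Y|}$; because the denominator is bounded below by a positive multiple of $|X|+|Y|$, this is a factor $1 + O_{P_1,P_2}(\epsilon)$ uniformly. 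For the real factors, I would argue that $|P_2(X,Y)| \le m(P_2) + 1$, say, forces $X/Y$ to lie within a bounded region and, in fact, within distance $O(Y^{-n})$ of one of the $\rho_{2,i}$, $i \le k$ — because if $X/Y$ stays bounded away from every real root, then $|Y^n \prod(X/Y - \rho_{2,i})|$ grows like $Y^{n}$ and is enormous for large $Y$; this confines the relevant points to finitely many ``approach cones'' around each real root. So fix $i \le k$ and consider points with $X/Y \to \rho_{2,i}$.

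The core estimate is then the real-root comparison. Suppose $X/Y$ is the convergent-type point approaching $\rho_{2,i}$. For the $j \ne i$ real factors and all complex factors, the same $1 + O(\epsilon)$ multiplicative control as above applies since $|X - \rho_{1,\ell}Y|$ and $|X-\rho_{2,\ell}Y|$ are both comparable to $|X|+|Y|$. The only delicate factor is $|X - \rho_{1,i}Y|$ versus $|X - \rho_{2,i}Y|$, where both can be small. Here I would invoke condition (1): $\rho_{1,i} \in B_\epsilon(\rho_{2,i})$ means $m(\rho_{1,i}) > (1-\epsilon) m(\rho_{2,i})$, i.e. $|Y^n(X/Y - \rho_{1,i})| \ge (1-\epsilon) m(\rho_{2,i}) \ge (1-\epsilon)\cdot \frac{|X-\rho_{2,i}Y|}{Y^{n-1}}\cdot \frac{Y^{n-1}}{|X-\rho_{2,i}Y|} m(\rho_{2,i})$ — more usefully, $|X - \rho_{1,i}Y| = Y^{n-1}\cdot |Y^n(X/Y-\rho_{1,i})|/Y^{n-1}\cdot Y^{-(n-1)}$... let me instead say it cleanly: multiplying the real factor $|X-\rho_{1,i}Y| = |Y^{n}(X/Y - \rho_{1,i})|\cdot Y^{-(n-1)}$ by the product of the other $n-1$ factors, each of which is $(1+O(\epsilon))$ times its $P_2$-counterpart and each of which equals $(|\rho_{2,i}-\rho_{2,\ell}| + o(1))\,|Y|$ for $\ell \ne i$, gives
\[
|P_1(X,Y)| = (1+O_{P_1,P_2}(\epsilon))\,\Big|c\prod_{\ell\ne i}(\rho_{2,i}-\rho_{2,\ell})\Big|\,\big|Y^n\big(\tfrac{X}{Y}-\rho_{1,i}\big)\big| \;\ge\; (1 - O_{P_1,P_2}(\epsilon))\, m(\rho_{2,i})\cdot\Big|c\prod_{\ell\ne i}(\rho_{2,i}-\rho_{2,\ell})\Big|,
\]
and the same computation with $P_2$ in place of $P_1$ and the trivial bound $|Y^n(X/Y - \rho_{2,i})| \ge m(\rho_{2,i})$... is not how $m(P_2)$ arises. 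Instead, one notes $m(P_2) = \inf$ over all $(X,Y)$ of $(1+o(1))|c\prod_{\ell\ne i}(\rho_{2,i}-\rho_{2,\ell})|\,|Y^n(X/Y-\rho_{2,i})|$ along each cone, so $m(P_2) \le |c\prod_{\ell\ne i}(\rho_{2,i}-\rho_{2,\ell})|\cdot m(\rho_{2,i})$, and combining with the displayed lower bound for $|P_1(X,Y)|$ yields $|P_1(X,Y)| \ge (1-O_{P_1,P_2}(\epsilon))\, m(P_2)$ on this cone. Taking the infimum over all cones and over the bounded region gives $m(P_1) \ge m(P_2) - \epsilon\cdot O_{P_1,P_2}(1)$, absorbing the multiplicative error into an additive one since $m(P_2)$ is bounded by the coefficient sizes.

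\textbf{Main obstacle.} The delicate point is making the $(1+o(1))$ and $(1+O(\epsilon))$ estimates genuinely uniform: the factor $1 + O(\epsilon)\cdot\frac{|X|+|Y|}{|X-\rho_{2,\ell}Y|}$ blows up if $X/Y$ simultaneously approaches two different roots, which of course cannot happen, but one must quantify the minimum separation — i.e. show that once $X/Y$ is within, say, half the minimal root-gap of $\rho_{2,i}$, every other factor is bounded below by a constant times $|Y|$ depending only on the root configuration (hence only on the coefficient sizes, given the fixed discriminant). Packaging ``$|P_2(X,Y)|$ bounded $\Rightarrow$ $X/Y$ in a bounded set'' uniformly in the (finitely many possible, by compactness of the relevant piece of the $\SL_2$-orbit) forms $P_1, P_2$ is the other bookkeeping hurdle, but it is routine homogeneity. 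I expect no conceptual difficulty beyond this uniformity, which is exactly what the hypothesis ``$O_{P_1,P_2}(1)$ depends only on the size of the coefficients'' is designed to accommodate.
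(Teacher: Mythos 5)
Your argument is essentially the paper's: the entire proof given there is the single observation that $m(P)=\min\bigl(\inf_{(X,Y)\in[-T,T]\backslash\underline{0}}|P(X,Y)|,\ \min_i m(\rho_i)\prod_{j\neq i}|\rho_i-\rho_j|\bigr)+o_P(\frac{1}{T})$, which is exactly the bounded-box-plus-approach-cones decomposition you derive, and the comparison you then carry out (the $1+O_{P_1,P_2}(\epsilon)$ control of the non-critical real and complex factors, the hypothesis $m(\rho_{1,i})>(1-\epsilon)m(\rho_{2,i})$ on the critical factor, and the absorption of multiplicative errors into $\epsilon\cdot O_{P_1,P_2}(1)$) is precisely the intended ``the Lemma easily follows.'' The one step you visibly hesitated over --- passing from the cone asymptotics to $m(P_2)\le|c\prod_{\ell\neq i}(\rho_{2,i}-\rho_{2,\ell})|\,m(\rho_{2,i})$, which is delicate because the infimum defining $m(\rho_{2,i})$ may be approached at small $Y$ where $X/Y$ need not be near $\rho_{2,i}$ --- is asserted with no further justification in the paper's displayed formula as well, so your write-up is faithful to, and more detailed than, the source.
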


\begin{proof}
Notice that for a form $P$ of non-zero discriminant, we have similarly to the proof of Theorem \ref{D>0}, that  $$m(P)=\min\left({\inf_{(X,Y)\in[-T,T]\backslash\underline{0}}\left\vert P(X,Y)\right\vert,\min_i(m(\rho_i)\prod_{j\neq i}\left\vert\rho_i-\rho_j\right\vert)}\right)+o_P(\frac{1}{T}),$$ as $T\rightarrow\infty.$ The Lemma easily follows from this observation.
\end{proof}

\begin{corollary}
 Let $P$ be binary forms of degree $n$ with roots denoted by $\rho_{i},$ $i\in\{1,2,...,n\}$ and let $\Lambda_0$ denote an extremal lattice. For every $\epsilon>0$, there exists $\delta>0$ such that if for some unimodular lattice $\Lambda\subset\R^2$, the roots $\rho_{\Lambda,i},$ $i\in\{1,2,...,n\}$ of $P\circ\Lambda$ satisfy  \begin{enumerate}
    \item $$\rho_{\Lambda,i}\in B_{\delta}(\rho_{i})\bigcap B(\rho_{i},\delta),$$ for $i\in\{1,2,...,k\}$
    \item $$\rho_{\Lambda,i}\in B(\rho_{i},\delta),$$ for $i\in\{k,k+1,...,n\},$
\end{enumerate}  then $$\Lambda\in\text{AEL}(\epsilon).$$ 
\end{corollary}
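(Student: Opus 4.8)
The plan is to deduce this corollary from Lemma \ref{B relevance} together with the definition of $\text{AEL}(\epsilon)$, by observing that all the quantities appearing in the definition of $\epsilon$-almost extremality vary continuously (indeed Lipschitz-ly) with the roots of the form. First I would fix $\epsilon>0$ and recall that, by Corollary \ref{Finitely minimized extremal cubic} and the notational setup, the extremal lattice $\Lambda_0$ is finitely minimized and the coefficients of $P\circ\Lambda$ stay bounded as $\Lambda$ ranges over any bounded neighborhood of $\Lambda_0$; hence the implied constant $O_{P,P\circ\Lambda}(1)$ in Lemma \ref{B relevance} is uniformly bounded by some absolute $C=C(P)$ once we restrict to $\|\Lambda-\Lambda_0\|$ small.

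Next I would choose $\delta>0$ small enough that three things hold simultaneously: (i) $\delta\, C < \tfrac12\epsilon$, so that Lemma \ref{B relevance} applied with $P_1=P\circ\Lambda$ and $P_2=P\circ\Lambda_0$ (these have the same discriminant, being in the same $\SL_2(\R)$-orbit) gives $m(P\circ\Lambda)\geq m(P\circ\Lambda_0)-\tfrac12\epsilon = M_P - \tfrac12\epsilon$; (ii) the constraint that each $\rho_{\Lambda,i}$ lies within $\delta$ of $\rho_i$ forces $\|\Lambda-\Lambda_0\|<\epsilon$ — this uses that an element of $\SL_2(\R)$ (modulo $\SL_2(\Z)$, and after the normalization fixing $\rho_1$ positive and largest) is determined, in a Lipschitz way on the relevant chart, by where it sends the ordered tuple of roots, since the discriminant is fixed and nonzero so the roots are distinct and the configuration is rigid; (iii) $\delta<\epsilon$ so that the hypothesis $\rho_{\Lambda,i}\in B_\delta(\rho_i)$ directly yields $m(\rho_{\Lambda,i})>(1-\delta)m(\rho_i)>m(\rho_i)-\epsilon$ for $i\le k$, which is precisely the second bullet in the definition of $\text{AEL}(\epsilon)$. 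Combining (ii) and (iii) with the reminder that $M_P-\tfrac12\epsilon\ge M_P-\epsilon$ gives both bullets defining membership in $\text{AEL}(\epsilon)$.

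The one point requiring a little care is item (ii): one must check that controlling the displacement of the real roots (only $k$ of them, possibly fewer than needed to pin down a generic element of $\SL_2(\R)$ when $k=1$) together with controlling the complex roots — which is what hypothesis (2) of the corollary gives after identifying $P\circ\Lambda$ with a point in the orbit — suffices to conclude $\|\Lambda-\Lambda_0\|<\epsilon$. This is where the non-vanishing of the discriminant is essential: the map sending $\Lambda$ to the unordered tuple of roots of $P\circ\Lambda$ is a local diffeomorphism near $\Lambda_0$ precisely because the roots are simple, and the hypotheses of the corollary pin down all $n$ roots (real and complex) to within $\delta$, hence pin down $\Lambda$ to within $\epsilon$ by the inverse function theorem; I would state this as a short lemma or simply invoke continuity of root-finding. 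I expect this rigidity bookkeeping — not the Diophantine content, which is entirely absorbed into Lemma \ref{B relevance} — to be the only mild obstacle, and it is routine.
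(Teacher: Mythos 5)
Your proposal is correct and takes essentially the route the paper intends: the paper treats this as an immediate consequence of Lemma \ref{B relevance} and the definitions, and your two substantive points --- that $\rho_{\Lambda,i}\in B_{\delta}(\rho_i)$ gives the Diophantine bullet of $\text{AEL}(\epsilon)$ once $\delta$ is small (note $m(\rho)\leq\tfrac12$ always, taking $Y=1$), and that pinning all $n$ simple roots to within $\delta$ pins the M\"obius transformation, hence $\Lambda$, to within $\epsilon$ --- are exactly the content needed for the two bullets defining $\text{AEL}(\epsilon)$. The only remark is that your step (i) via Lemma \ref{B relevance} controls $m(P\circ\Lambda)$ globally, which is in the spirit of why the corollary is stated there but is not literally required by the definition of $\text{AEL}(\epsilon)$; it is harmless.
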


For the proof of Theorem \ref{Flexibility of the extremal lattice}, we will need to show that the set $$B_{\epsilon}(\rho)\cap E^{\eta}(\rho)\cap B(\rho,\epsilon)$$ has many points. Before we demonstrate how to obtain such a result, we need two lemmata regarding the distribution of continued fractions.

\begin{lemma} \label{Continued Fraction Distribution}
    Let $[\alpha_0,\alpha_1,...,\alpha_N]$ denote some starting sequence. Then there exists positive absolute constants $A,B$ such that for any $k\in\mathbb{N}$,$$\frac{A}{k^2Q_N^2}<\lambda\left(x\in\R:\,\,\alpha_{i}(x)=\alpha_i \text{ for all $i\leq N$ and }\alpha_{N+1}(x)=k\right)<\frac{B}{k^2Q_N^2},$$ where $Q_N$ is the denominator of the rational number with continued fraction $[\alpha_0,\alpha_1,...,\alpha_N]$ in simple form.
\end{lemma}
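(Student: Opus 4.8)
The plan is to identify the set of $x$ whose continued fraction expansion begins $[\alpha_0,\alpha_1,\dots,\alpha_N,k,\dots]$ as an explicit interval and compute its length via the standard Möbius-transformation description of cylinder sets. Recall that if $\tfrac{P_N}{Q_N}$ and $\tfrac{P_{N-1}}{Q_{N-1}}$ are the last two convergents of the starting sequence $[\alpha_0,\dots,\alpha_N]$, then every $x$ with $\alpha_i(x)=\alpha_i$ for $i\le N$ is of the form
$$
x = \frac{P_N t + P_{N-1}}{Q_N t + Q_{N-1}}, \qquad t = a_{N+1}(x) = [\alpha_{N+1}(x),\alpha_{N+2}(x),\dots] \in (1,\infty),
$$
and conversely. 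The condition $\alpha_{N+1}(x)=k$ is exactly the condition $t\in[k,k+1)$ (using $\lfloor t\rfloor = \alpha_{N+1}(x)$, up to the usual endpoint ambiguity which has measure zero). So the set in question is the image of the interval $[k,k+1)$ under the Möbius map $\varphi(t)=\frac{P_N t+P_{N-1}}{Q_N t+Q_{N-1}}$, which is monotone with derivative $\varphi'(t)=\frac{P_N Q_{N-1}-P_{N-1}Q_N}{(Q_N t+Q_{N-1})^2}=\frac{\pm 1}{(Q_N t+Q_{N-1})^2}$.

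Hence the Lebesgue measure of the cylinder set equals
$$
\left\vert \varphi(k+1)-\varphi(k)\right\vert = \int_k^{k+1}\frac{dt}{(Q_N t+Q_{N-1})^2} = \frac{1}{(Q_N k+Q_{N-1})(Q_N(k+1)+Q_{N-1})}.
$$
Since $0\le Q_{N-1}\le Q_N$, we have $Q_N k \le Q_N k + Q_{N-1} \le Q_N(k+1)$ and similarly $Q_N(k+1)\le Q_N(k+1)+Q_{N-1}\le Q_N(k+2)\le 2Q_N(k+1)$, so the denominator lies between $Q_N^2 k(k+1)$ and $2Q_N^2(k+1)^2$. Using $\tfrac14 k^2 \le k(k+1)$ is false for small $k$, so instead I would simply bound $\tfrac12 (k+1)^2 \le k(k+1)\le (k+1)^2 \le 4k^2$ for $k\ge 1$ to get absolute constants: the measure is at most $\frac{1}{Q_N^2 k(k+1)} \le \frac{2}{Q_N^2 (k+1)^2}\le \frac{2}{Q_N^2 k^2}$, giving $B=2$, and at least $\frac{1}{2Q_N^2(k+1)^2}\ge \frac{1}{8 Q_N^2 k^2}$, giving $A=\tfrac18$. (The precise constants are irrelevant; any fixed $A,B$ work.)

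There is no real obstacle here — the only point requiring a little care is the correspondence between the cylinder set and the interval $[k,k+1)$ at the endpoints (rationals with two expansions), but these form a measure-zero set and do not affect $\lambda$; and one must note that the bound is uniform in the starting sequence since the estimate depends on $[\alpha_0,\dots,\alpha_N]$ only through $Q_N$ (and the harmless inequality $Q_{N-1}\le Q_N$). I would present the computation of $\varphi$, the derivative formula, the exact measure $\frac{1}{(Q_Nk+Q_{N-1})(Q_N(k+1)+Q_{N-1})}$, and then the two-sided bound, concluding with explicit absolute $A,B$.
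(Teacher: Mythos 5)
Your proposal is correct and follows essentially the same route as the paper: both identify the cylinder set as the interval between $[\alpha_0,\dots,\alpha_N,k,\infty]$ and $[\alpha_0,\dots,\alpha_N,k+1,\infty]$ and compute its length exactly as $\frac{1}{(kQ_N+Q_{N-1})((k+1)Q_N+Q_{N-1})}$ (the paper cites its convergent-approximation lemma where you compute via the M\"obius map, but this is the same calculation). The concluding two-sided bound with explicit absolute constants is routine and matches the paper's "from which the result follows."
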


\begin{proof}
    Define the two rational numbers 
    \begin{align}
        &  z_1=[\alpha_0,\alpha_1,...,\alpha_N,k+1,\infty] \notag \\   &z_2=[\alpha_0,\alpha_1,...,\alpha_N,k,\infty], \notag    
    \end{align}

    and notice that $$\lambda\left(x\in\R:\,\,\alpha_{i}(x)=\alpha_i(r) \text{ for all $i\leq N$ and }\alpha_{N+1}(x)=k\right)=\left\vert z_1-z_2\right\vert.$$

    By Lemma \ref{Convergents Approximation}, we know that $\left\vert z_1-z_2\right\vert=\frac{1}{(kQ_N+Q_{N-1})((k+1)Q_N+Q_{N-1})},$ from which the result follows.

\end{proof}

Hence by observing that for some fixed number $\rho$ and some large natural number $N\in\mathbb{N},$ $\alpha_{N}(\rho)$ needs to be of size $\sim Q_{N-1}^{n-2}$ to ``affect" $m(\rho),$ we see that for most numbers, $m(\rho)$ is determined by the smaller integer pairs $(X,Y)$ rather than the larger ones. We quantify this by showing:

\begin{lemma} \label{Measure of Cutting}
Let $[\alpha_0,\alpha_1,...,\alpha_N]$ denote some starting sequence and let $\eta>0$. Then there exists some absolute constant $C>0$, such that  $$\frac{\lambda\left(\left\{x\in\R:\,\,\alpha_{i}(x)=\alpha_i \text{ for all $i\leq N$ and } \alpha_{N+i+1}(x)<{Q_{N+i}^{\eta}(x)}\text{ for all $i\geq0$}\right\}\right)}{\lambda\left(\left\{x\in\R:\,\,\alpha_{i}(x)=\alpha_i \text{ for all $i\leq N$ } \right\}\right)}>1-
C\phi^{-\eta N},$$ where $\phi$ denotes the golden ratio.
    
\end{lemma}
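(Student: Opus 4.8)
The plan is to estimate the measure of the "bad" set --- the complement, consisting of those $x$ with the prescribed starting sequence $[\alpha_0,\dots,\alpha_N]$ for which the Diophantine condition $\alpha_{N+i+1}(x)<Q_{N+i}^{\eta}(x)$ fails for at least one $i\geq 0$ --- and show it is at most $C\phi^{-\eta N}$ times the measure of the cylinder $\{x:\alpha_i(x)=\alpha_i\text{ for }i\leq N\}$. First I would decompose the bad set over the first index $i\geq 0$ at which the condition fails; since the events are nested inside successively finer cylinders this gives a union bound, so it suffices to bound, for each fixed $i$, the conditional measure (given the starting sequence up to level $N$) of the set where $\alpha_{N+i+1}(x)\geq Q_{N+i}^{\eta}(x)$, and then sum over $i$.

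The key input is Lemma \ref{Continued Fraction Distribution}: conditioned on a fixed starting sequence $[\alpha_0,\dots,\alpha_{N+i}]$ with denominator $Q_{N+i}$, the measure of $\{\alpha_{N+i+1}=k\}$ is comparable to $B/(k^2 Q_{N+i}^2)$, hence summing over $k\geq Q_{N+i}^{\eta}$ gives a conditional probability $\ll Q_{N+i}^{-\eta}$ for the event $\{\alpha_{N+i+1}\geq Q_{N+i}^{\eta}\}$ relative to the cylinder $[\alpha_0,\dots,\alpha_{N+i}]$. To pass from "relative to the level-$(N+i)$ cylinder" to "relative to the level-$N$ cylinder", I would sum over all admissible extensions $[\alpha_{N+1},\dots,\alpha_{N+i}]$; using the multiplicativity built into Lemma \ref{Continued Fraction Distribution} (the measure of a cylinder of length $N+i$ inside one of length $N$ behaves like $Q_N^2/Q_{N+i}^2$ up to constants, and these sum to a bounded quantity by the same geometric-series argument as in the proof of Lemma \ref{Continued Fraction Distribution}) this shows the conditional probability of failure at index $i$ is $\ll \mathbb{E}\big[Q_{N+i}^{-\eta}\big]$ over the level-$N$ cylinder. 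Finally, since $Q_{N+i}\geq Q_N\cdot F_{i}$ where $F_i$ is the $i$-th Fibonacci number (the slowest growth of continued-fraction denominators occurs for all partial quotients equal to $1$), we get $Q_{N+i}^{-\eta}\leq Q_N^{-\eta}\phi^{-(\,i-O(1)\,)\eta}\leq \phi^{-\eta N}\phi^{-\eta i}O(1)$ using $Q_N\geq \phi^{\,N-O(1)}$; summing the geometric series in $i$ yields the bound $C\phi^{-\eta N}$.

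The main obstacle I anticipate is the bookkeeping in the passage from conditional probabilities at level $N+i$ to conditional probabilities at level $N$: one must check that summing the bound $Q_{N+i}^{-\eta}$ against the conditional weights of all intermediate cylinders really does converge and does not lose more than a constant factor, i.e. that $\sum_{[\alpha_{N+1},\dots,\alpha_{N+i}]} \frac{\lambda(\text{cyl of length }N+i)}{\lambda(\text{cyl of length }N)}\,Q_{N+i}^{-\eta}$ is controlled. This follows because $\frac{\lambda(\text{cyl }N+i)}{\lambda(\text{cyl }N)}\asymp Q_N^2/Q_{N+i}^2$ and the total mass $\sum Q_N^2/Q_{N+i}^2$ of all length-$(N+i)$ subcylinders is $\asymp 1$, while the extra factor $Q_{N+i}^{-\eta}$ only helps; combined with the Fibonacci lower bound $Q_{N+i}/Q_N\geq F_i\gg \phi^{i}$ one extracts the decaying factor $\phi^{-\eta i}$ uniformly, making the sum over $i$ a convergent geometric series. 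The rest is the elementary estimate $Q_N\geq\phi^{N-1}$ and collecting constants into $C$.
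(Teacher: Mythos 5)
Your argument is correct, and it reaches the stated bound by a genuinely different decomposition than the paper's. You work entirely on the cylinder side: you condition level by level, use Lemma \ref{Continued Fraction Distribution} to bound the conditional probability of $\{\alpha_{N+i+1}\geq Q_{N+i}^{\eta}\}$ inside each level-$(N+i)$ cylinder by $O(Q_{N+i}^{-\eta})$, note that the level-$(N+i)$ subcylinders partition the level-$N$ cylinder so the conditional weights sum to $1$, and then sum over $i$ using $Q_{N+i}\gg Q_N\phi^{i}$ and $Q_N\geq F_N$. The paper instead translates the failure of the partial-quotient bound, via Lemma \ref{Convergents Approximation} and Lemma \ref{Good approximation means convergent}, into the existence of a rational $X/Y$ with $Y>Q_N$ and $\left\vert x-\frac{X}{Y}\right\vert<Y^{-2-\eta}$, and performs a single union bound over such rationals lying in the cylinder (an interval of length $O(Q_N^{-2})$, so there are $O(Y/Q_N^2)$ choices of $X$ for each $Y$), obtaining $Q_N^2\sum_{Y>Q_N}(Y/Q_N^2)Y^{-2-\eta}\ll Q_N^{-\eta}$. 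The two routes use the same two standard lemmas but in different roles: the paper's count over rationals is shorter and avoids any bookkeeping over intermediate cylinders, while your level-by-level conditioning is more transparent probabilistically and adapts more readily to other constraints on the partial quotients; your anticipated "obstacle" (controlling the sum over intermediate cylinders) is indeed handled exactly as you say, since those conditional weights sum to one. One cosmetic remark applying equally to both proofs: the constant produced depends on $\eta$ (through $\sum_i\phi^{-\eta i}$ in your version, through $\sum_{Y>Q_N}Y^{-1-\eta}$ in the paper's), so "absolute" should be read as "depending only on $\eta$".
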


\begin{proof}
By Lemma \ref{Continued Fraction Distribution}, we know that $$\lambda\left(\left\{x\in\R:\,\,\alpha_{i}(x)=\alpha_i \text{ for all $i\leq N$ } \right\}\right)\sim \frac{1}{Q_N^2}.$$ If for some $x$ with starting sequence that of the statement of the Lemma, there exists $i\geq0$ such that $\alpha_{N+i+1}(x)\geq {Q_{N+i}^{\eta}(x)},$ then by Lemma \ref{Convergents Approximation} we can find a rational number $\frac{X}{Y}$ so that $Y>Q_N$ and $$\left\vert x-\frac{X}{Y}\right\vert<\frac{1}{Y^{2+\eta}}.$$ Furthermore, since $x$ has $\frac{P_N}{Q_N}$ as a convergent, we can assume $C_1Y<X<C_2Y,$ where $C_2-C_1=O(\frac{1}{Q_N^2}).$ We obtain the bound  
\begin{align}
    &\frac{\lambda\left(\left\{x\in\R:\,\,\alpha_{i}(x)=\alpha_i \text{ for all $i\leq N$ and } \alpha_{N+i+1}(x)<{Q_{N+i}^{\eta}(x)}\text{ for all $i\geq0$}\right\}\right)}{\lambda\left(\left\{x\in\R:\,\,\alpha_{i}(x)=\alpha_i \text{ for all $i\leq N$ } \right\}\right)}\geq \notag \\& 1-O(Q_N^2\sum_{\substack{C_1Y<X<C_2Y,\\Y>Q_N}}\frac{1}{Y^{{2+\eta}}})\geq 1-O(Q_N^2\sum_{Y>Q_N}\frac{Y}{Q_N^2}\frac{1}{Y^{2+\eta}})\geq\notag\\& 1-O(\sum_{Y>Q_N}\frac{1}{Y^{1+\eta}})\geq 1-O(\phi^{-\eta N}).\notag 
\end{align}
Here we used the fact that $Q_N\geq F_N$, where $F_N$ stands for the $N^{\text{th}}$ Fibonacci number. 
\end{proof} The following technical lemma is the key to constructing points inside $$B_{\epsilon}(\rho)\cap E^{\eta}(\rho).$$
\begin{lemma} \label{S-Lemma}

Let $n\geq3$ a natural number, $\rho\in\R$, $\epsilon>0$ and $\eta\in (0,n-2)$. For any $N\in\mathbb{N}$ and any positive integer $h\leq\alpha_N(\rho)$ define $S_{\rho,\epsilon,N,h,\eta}$ to be the set of real numbers $x$ with 

$$\alpha_i(x)\begin{cases}
=\alpha_i(\rho) &\text{for $i<N$ },\\\in \left[h,\min\left((1-\epsilon)^{-1}\left(Q^{n-2}_{N-1}(\rho)m^{-1}(\rho)+1\right)-1,(1+\epsilon)a_N(\rho)\right)\right] & i=N, \\<{Q_{i-1}^{\eta}(x)}&\text{for all $i>N.$}\end{cases}$$ 
Then for every $\epsilon>0,$ there exists $N_0$ such that for every $N\geq N_0$, every $h\leq a_N(\rho)$ and every $\eta>0$:
$$S_{\rho,\epsilon,N,h,\eta}\subset B_{\epsilon}(\rho)\cap E^{\eta}(\rho).$$

\end{lemma}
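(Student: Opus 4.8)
The plan is to prove the two containments $S_{\rho,\epsilon,N,h,\eta}\subset B_{\epsilon}(\rho)$ and $S_{\rho,\epsilon,N,h,\eta}\subset E^{\eta}(\rho)$ separately, the first being the substantive one and the second following from the Diophantine tail condition $\alpha_i(x)<Q_{i-1}^\eta(x)$ for $i>N$.

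For the containment in $E^{\eta}(\rho)$: suppose $x\in S_{\rho,\epsilon,N,h,\eta}$ and $|X/Y-x|<1/Y^{2+\eta}$. First I would observe that by Lemma \ref{Good approximation means convergent} (applied with $\eta>0$, so certainly $<1/(2Y^2)$ for $Y$ large) the pair $(X,Y)$ must be a convergent $(P_M(x),Q_M(x))$ of $x$; and by Lemma \ref{Convergents Approximation} the inequality $|x-P_M/Q_M|<Q_M^{-2-\eta}$ forces $\alpha_{M+1}(x)\gtrsim Q_M^{\eta}$. Since the tail condition rules out $\alpha_{M+1}(x)\geq Q_{M}^{\eta}(x)$ for $M\geq N$, we must have $M<N$, hence $(X,Y)$ is a convergent of $\rho$ as well because $\alpha_i(x)=\alpha_i(\rho)$ for $i<N$. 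Then $|X/Y-\rho|=|P_M(\rho)/Q_M(\rho)-\rho|<1/Y^{2}\cdot(1/(\alpha_{M+1}(\rho)))\le 2/Y^{2+\eta}$ after possibly enlarging $N_0$ so that the relevant convergents of $\rho$ are good enough; care is needed here to handle the finitely many small $Y$, which is where the factor $2$ and the choice of $N_0$ large enter. (One also checks the edge case $M=N-1$ directly using the explicit upper bound on $\alpha_N(x)$, which keeps $Q_N(x)$ comparable to $Q_N(\rho)$.)

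For the containment in $B_{\epsilon}(\rho)$, i.e. $m(x)>(1-\epsilon)m(\rho)$: I would split $m(x)=\inf_{(X,Y)}|Y^n(X/Y-x)|$ according to the size of $Y$ relative to $Q_{N-1}(\rho)$, using the standard fact (as in the proof of Theorem \ref{D>0}) that the infimum is attained either at a bounded pair or at a convergent. For $Y\le Q_{N-1}(x)=Q_{N-1}(\rho)$ the convergents of $x$ coincide with those of $\rho$, so these contributions to $m(x)$ are literally equal to the corresponding contributions to $m(\rho)$, hence $\ge m(\rho)$. For $Y=Q_N(x)$, Lemma \ref{Convergents Approximation} gives $Y^n|X/Y-x|=Q_N^{n-2}(x)/(\alpha_{N+1}(x)+Q_{N-1}(x)/Q_N(x))\ge Q_{N-1}^{n-2}(x)\alpha_N(x)^{n-2}/(something)$; the upper-bound constraint $\alpha_N(x)\le (1-\epsilon)^{-1}(Q_{N-1}^{n-2}(\rho)m^{-1}(\rho)+1)-1$ is engineered exactly so that this term is $\ge(1-\epsilon)m(\rho)$, while the lower bound $\alpha_N(x)\ge h$ together with $\eta<n-2$ guarantees the tail convergents $Y>Q_N(x)$ contribute $\gtrsim Q_M^{n-2-\eta}(x)\to\infty$ and are therefore harmless for $N\ge N_0$. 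The main obstacle — and the step deserving the most care — is this bookkeeping at $Y=Q_N(x)$: one must track the denominator recursion $Q_N(x)=\alpha_N(x)Q_{N-1}(\rho)+Q_{N-2}(\rho)$ precisely enough to see that the threshold on $\alpha_N(x)$ gives exactly the factor $(1-\epsilon)$ rather than some uncontrolled constant, and one must confirm that the ``bounded pair'' part of $m(x)$ agrees with that of $m(\rho)$ up to negligible error once $N$ is large (so that $x$ is close to $\rho$). Finally, collecting the three regimes and choosing $N_0$ large enough that all error terms and tail contributions are dominated, I conclude $m(x)\ge(1-\epsilon)m(\rho)$, which is the desired inclusion.
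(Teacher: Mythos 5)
Your overall architecture (reduce $m(x)$ to bounded pairs plus convergents, kill the tail $Y\geq Q_N(x)$ with the condition $\alpha_{i}(x)<Q_{i-1}^{\eta}(x)$, and use the cap on $\alpha_N(x)$ to control the one critical convergent) matches the paper's, and your treatment of $E^{\eta}(\rho)$ is fine. But the central step of the $B_{\epsilon}(\rho)$ inclusion contains a genuine error. You assert that for $Y\leq Q_{N-1}(x)=Q_{N-1}(\rho)$ the contributions to $m(x)$ are ``literally equal'' to the corresponding contributions to $m(\rho)$. They are not: the convergents $P_i/Q_i$ coincide as rational numbers for $i\leq N-1$, but the quantity being minimized is $Y^n\left\vert X/Y-x\right\vert$, which depends on the distance from $x$ (not $\rho$) to that rational. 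By Lemma \ref{Convergents Approximation} this contribution equals $Q_i^{n-2}/(a_{i+1}(x)+Q_{i-1}/Q_i)$, and $a_{i+1}(x)\neq a_{i+1}(\rho)$ because the tails differ; for $i$ with $a_{i+1}$ of size $O(1)$ the ratio of the two contributions can be as bad as $1/2$. If the contributions really were equal the lemma would be trivial — the entire content is to show the drop is at most a factor $(1-\epsilon)$. The paper's fix, which your proposal is missing, is to note that the only indices $i\leq N-2$ that can compete with $m(\rho)$ are those with $\alpha_{i+1}(x)\gg Q_i^{n-2}$, and for such $i$ the shared partial quotient $\alpha_{i+1}(x)=\alpha_{i+1}(\rho)$ gives $\left\vert a_{i+1}(x)-a_{i+1}(\rho)\right\vert<1$, whence the two contributions differ by $O(Q_i^{-(n-1)})<\epsilon m(\rho)$ once the threshold separating ``bounded pairs'' from ``convergents'' is taken large in terms of $\epsilon$.

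A second, related confusion: you attach the upper bound $\alpha_N(x)\leq(1-\epsilon)^{-1}(Q_{N-1}^{n-2}(\rho)m^{-1}(\rho)+1)-1$ to the term $Y=Q_N(x)$. That term is controlled purely by the tail condition ($a_{N+1}(x)<Q_N^{\eta}(x)+1$ gives a contribution $\gg Q_N^{n-2-\eta}\to\infty$); the cap on $\alpha_N(x)$ is needed for the term $Y=Q_{N-1}(x)$, whose contribution is $Q_{N-1}^{n-2}/(a_N(x)+Q_{N-2}/Q_{N-1})\geq Q_{N-1}^{n-2}/(\alpha_N(x)+1)\geq(1-\epsilon)m(\rho)$. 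In your decomposition that term sits inside the ``literally equal, hence $\geq m(\rho)$'' regime, which is doubly wrong: it is neither equal to the $\rho$-contribution nor in general $\geq m(\rho)$, only $\geq(1-\epsilon)m(\rho)$, and only because of the cap you deployed elsewhere.
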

\begin{proof}
If $m(\rho)=0$ then $B_{\epsilon}(\rho)=\R$. We assume thus that $m(\rho)>0.$ The fact that $S_{\rho,\epsilon,N,h,\eta}\subset E^{\eta}(\rho)$ follows immediately from the definition of the set $S_{\rho,\epsilon,N,h,\eta}$. As for $B_{\epsilon}(\rho),$ let $x\in S_{\rho,\epsilon,N,h,\eta}.$ Notice that for $Y>Q_{N-1}(x),$ we know by Lemmata \ref{Convergents Approximation} and \ref{Good approximation means convergent} and the fact that $a_i(x)<{Q_{i-1}^{\eta}(x)}$ for $i>N,$ that there exists some absolute constant $C>0$, such that $$\left\vert Y^nx-XY^{n-1}\right\vert\geq CY^{n-2-\eta}.$$ Hence, there exists $N_0$ such that for $N\geq N_0$: $$m(x)=\min_{0<Y<Q_N(x),\vert X\vert<P_N(x)}\left\vert Y^nx-XY^{n-1}\right\vert.$$
By Lemma \ref{Good approximation means convergent}, we know that there exists some constant $T$ that depends only on $\rho$ such that $$m(x)=\min\left(\min_{Y<T,\vert X\vert<T}\left\vert Y^nx-XY^{n-1}\right\vert,\min_{i\leq N-1}\left\vert Q_i^n(x)x-P_i(x)Q_i^{n-1}(x)\right\vert\right).$$
Now since the constant $T$ is fixed and $\bigcap_{N}S_{\rho,\epsilon,N,h,\eta}=\{\rho\}$, we have that by picking $N_0$ large enough: $$\min_{0<Y<T,\vert X\vert<T}\left\vert Y^nx-XY^{n-1}\right\vert\geq m(\rho)(1-\epsilon).$$ Hence, we are done in the case of $$\min_{0<Y<T,\vert X\vert<T}\left\vert Y^nx-XY^{n-1}\right\vert\leq\min_{i\leq N-1}\left\vert Q_i^n(x)x-P_i(x)Q_i^{n-1}(x)\right\vert.$$
Assume now that $$\min_{i\leq N-1}\left\vert Q_i^n(x)x-P_i(x)Q_i^{n-1}(x)\right\vert<\min_{0<Y<T,\vert X\vert<T}\left\vert Y^nx-XY^{n-1}\right\vert.$$ For $i=N-1$ we have by Lemma \ref{Convergents Approximation}, that if $N$ is sufficiently large: $$\left\vert Q_{N-1}^n(x)x-P_{N-1}(x)Q_{N-1}^{n-1}(x)\right\vert\geq\frac{Q_{N-1}^{n-2}(x)}{\alpha_{N}(x)+1}\geq\frac{Q_{N-1}^{n-2}(x)}{(1-\epsilon)^{-1}Q_{N-1}^{n-2}(x)m^{-1}(\rho)}=(1-\epsilon)m(\rho),$$
where here we used that $\alpha_{N}(x)+1\leq(1-\epsilon)^{-1}Q_{N-1}^{n-2}(x)m^{-1}(\rho)$. We are left with showing $$\min_{i\leq N-2}\left\vert Q_i^n(x)x-P_i(x)Q_i(x)^{n-1}\right\vert>(1-\epsilon)m(\rho).$$ Assume $$\min_{i\leq N-1}\left\vert Q_i^n(x)x-P_i(x)Q_i^{n-1}(x)\right\vert=\left\vert Q_{k}^n(x)x-P_k(x)Q_k^{n-1}(x)\right\vert,$$ for some $T<Q_k\leq Q_{N-1}.$ Then we can deduce that for some absolute constant $C$ that depends only on $\rho$, $$\alpha_{k+1}(x)>CQ^{n-2}_k(x),$$ and hence by the triangle inequality,

\begin{multline}\left\vert Q_{k}^n(x)x-P_k(x)Q_k^{n-1}(x)\right\vert\geq  \left\vert Q_{k}^n(\rho)\rho-P_k(\rho)Q_k^{n-1}(\rho)\right\vert\\-\frac{Q_k^{n-1}(\rho)\left\vert a_{k+1}(\rho)-a_{k+1}(x)\right\vert}{(Q_k(\rho)a_{k+1}(\rho)+Q_{k-1}(\rho))(Q_k(\rho)a_{k+1}(\rho)+Q_{k-1}(\rho))}.\notag
\end{multline}

Since $\alpha_{k+1}(\rho)=\alpha_{k+1}(x),$ we have $\left\vert a_{k+1}(\rho)-a_{k+1}(x)\right\vert<1.$
We deduce that$$\frac{Q_k^{n-1}(\rho)\left\vert a_{k+1}(\rho)-a_{k+1}(x)\right\vert}{(Q_k(\rho)a_{k+1}(\rho)+Q_{k-1}(\rho))(Q_k(\rho)a_{k+1}(\rho)+Q_{k-1}(\rho))}<\frac{1}{C^2Q_k^{n-1}}<\frac{1}{C^2T^{n-1}}.$$ By possibly increasing the value of $T$ (only in terms of the fixed $\epsilon$), we hence get $$\left\vert Q_{k}^n(x)x-P_k(x)Q_k^{n-1}(x)\right\vert\geq \left\vert Q_{k}^n(\rho)\rho-P_k(\rho)Q_k^{n-1}(\rho)\right\vert-\epsilon m(\rho)\geq (1-\epsilon)m(\rho).$$

\end{proof}

\section{The structural theorem and density of approximant lattices} \label{The structural theorem and density of approximant lattices}
By Lemmata \ref{Measure of Cutting} and \ref{S-Lemma}, we have a way of constructing points in $B_{\epsilon}(\rho)\cap E^{\eta}(\rho)$ close to $\rho$. For the proof of Theorem \ref{Flexibility of the extremal lattice}, we would like to show that $B_{\epsilon}(\rho)\cap E^{\eta}(\rho)$ has density arbitrarily close to $1$ as we restrict to smaller and smaller open sets around $\rho$, in order to capture points in $X$. This is however not true and we will see that $$\underline{\lim}_{\lambda(I)\rightarrow0}\frac{\lambda\left(\{x:m(x)>m(\rho)-\delta\}\bigcap I\right)}{\lambda\left(I\right)}=0.$$ In order to overcome this obstacle we prove the following Proposition, which constitutes the main ingredient of Theorem \ref{Flexibility of the extremal lattice}. In a very informal way we show that if we pick some interval $I$ around $\rho$, then either $99\%$ of the points in $I$ are in $B_{\epsilon}(\rho)\cap E^{\eta}(\rho)$ or there exists some subinterval $I'\subset I$ containing $\rho$, which has at least $\frac{\epsilon}{100}\%$ the size of $I$, such that $99,99\%$ of the points in $I'$ are inside $B_{\epsilon}(\rho)\cap E^{\eta}(\rho).$ 
\begin{thm}[Structural Theorem]\label{Positive Proportion}
    Let $\rho\in\R$ with $E(\rho)<\infty$ and $\epsilon>0$. There exists some constant $C>0$ such that for every pair of positive parameters $(\tau_1,\tau_2)$, there exists $\delta>0$ such that for every interval $I$ of diameter less than $\delta$ and containing $\rho$, we have that one of the following two  holds: \begin{enumerate}
        \item (Many $B_{\epsilon}(\rho)\cap E^{\eta}(\rho)$-points)\\Either $$\lambda(I\cap B_{\epsilon}(\rho)\cap E^{\eta}(\rho))\geq (1-\tau_1)\lambda(I).$$
        \item (Subinterval with higher concentration of $B_{\epsilon}(\rho)\cap E^{\eta}(\rho)-$points)\\ Or there exists some subinterval $I'\subset I$ containing $\rho$ such that $$\lambda(I')\geq C\tau_1\epsilon\lambda(I)$$ and $$\lambda(I'\cap B_{\epsilon}(\rho)\cap E^{\eta}(\rho))>\lambda(I')(1-\tau_2).$$
        
    \end{enumerate} 
\end{thm}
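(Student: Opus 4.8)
The plan is to analyze the interval $I$ around $\rho$ by looking at where $\rho$ sits inside the dyadic-type hierarchy of continued-fraction cylinders. Fix $\epsilon>0$ and the target thresholds $\tau_1,\tau_2$. For a real number $\rho$ with $E(\rho)<\infty$, the number $m(\rho)$ is determined (up to the $o(1/T)$ error of Lemma \ref{B relevance}) by finitely many convergents, and for any $N$ the cylinder $C_N(\rho)=\{x:\alpha_i(x)=\alpha_i(\rho)\text{ for }i<N\}$ has measure $\asymp Q_{N-1}^{-2}(\rho)$ by Lemma \ref{Continued Fraction Distribution}. Given $I$ with $\rho\in I$ and $\lambda(I)<\delta$, choose $N=N(I)$ to be the largest index such that $C_N(\rho)\supset I$; as $\delta\to 0$ we have $N(I)\to\infty$. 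The key dichotomy will come from comparing $I$ with the sub-cylinder of $C_N(\rho)$ determined by fixing in addition the value $\alpha_N(x)=\alpha_N(\rho)$: either $I$ is ``spread out'' inside $C_N(\rho)$ (comparable in size to the whole cylinder, so that it meets many different values of $\alpha_N$), or $I$ is essentially concentrated in a single sub-cylinder where $\alpha_N$ is pinned down.

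First I would handle the spread-out case, which yields alternative (1). Here $I$ contains cylinders $C_{N+1}$ corresponding to a long run of consecutive values $\alpha_N(x)=h,h+1,\dots$; by Lemma \ref{S-Lemma} the set $S_{\rho,\epsilon,N,h,\eta}$ with the allowed range for $\alpha_N$ — roughly $[h,(1-\epsilon)^{-1}(Q_{N-1}^{n-2}m^{-1}(\rho)+1)-1]$ intersected with $[h,(1+\epsilon)a_N(\rho)]$ — is contained in $B_\epsilon(\rho)\cap E^\eta(\rho)$ once $N$ is large. Using Lemma \ref{Measure of Cutting}, the extra tail condition $\alpha_{i+1}(x)<Q_i^\eta(x)$ for $i>N$ fails only on a proportion $O(\phi^{-\eta N})$ of each cylinder, which is $<\tau_1$ for $\delta$ small. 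Summing the measures from Lemma \ref{Continued Fraction Distribution} over the admissible values of $\alpha_N$ inside $I$, the complementary values (those $\alpha_N$ outside the allowed window, i.e. too large) contribute a tail $\sum_{k>K}k^{-2}\asymp K^{-1}$ which is small relative to $\lambda(I)$ precisely because the window has length $\gg \epsilon$ times the relevant scale; this is where the factor $\epsilon$ enters. So in this regime $\lambda(I\cap B_\epsilon(\rho)\cap E^\eta(\rho))\geq(1-\tau_1)\lambda(I)$, giving (1).

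In the concentrated case — when $I$ fails to be comparable to $C_N(\rho)$ — I would produce the subinterval $I'$ of alternative (2). Let $a=\alpha_N(\rho)$ and let $I'$ be the portion of $C_{N+1}$-cylinders inside $C_N(\rho)$ with $\alpha_N(x)$ ranging over an interval of length $c\epsilon a$ around $a$ (clipped to stay $\geq 1$ and to stay within the Lemma \ref{S-Lemma} window), further intersected with $C_N(\rho)$ so that it contains $\rho$; one checks from Lemma \ref{Continued Fraction Distribution} that $\lambda(I')\asymp \epsilon\, a\cdot (a^2 Q_{N-1}^2)^{-1}\asymp \epsilon\lambda(C_N(\rho))/a$, while $\lambda(I)\leq\lambda$ of the single sub-cylinder $\{\alpha_N=a\}\asymp \lambda(C_N(\rho))/a^2$ up to the next level — so $\lambda(I')\geq C\tau_1\epsilon\lambda(I)$ after absorbing constants (the $\tau_1$ is harmless since we may shrink $I'$). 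Inside $I'$, Lemmas \ref{S-Lemma} and \ref{Measure of Cutting} again show all but an $O(\phi^{-\eta N})<\tau_2$ proportion lies in $B_\epsilon(\rho)\cap E^\eta(\rho)$, giving (2). The constant $C$ depends only on the absolute constants $A,B$ of Lemma \ref{Continued Fraction Distribution} and on $\rho$ through $E(\rho)<\infty$ (which bounds how large $\alpha_N(\rho)$ can be relative to $Q_{N-1}$, ensuring the Lemma \ref{S-Lemma} window is nonempty for all large $N$), and $\delta$ is chosen at the end small enough that $N(I)\geq N_0$ from Lemma \ref{S-Lemma} and $C\phi^{-\eta N}<\min(\tau_1,\tau_2)$.

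The main obstacle I anticipate is the bookkeeping in the concentrated case: making precise the claim that $I$, when not comparable to $C_N(\rho)$, must be essentially trapped in one $\alpha_N$-sub-cylinder, and then verifying that the constructed $I'$ simultaneously (a) contains $\rho$, (b) has length a definite fraction $\asymp\epsilon$ of $\lambda(I)$, and (c) lies in the window of Lemma \ref{S-Lemma}. Condition (c) is the delicate one — if $\rho$ happens to have $\alpha_N(\rho)$ already near the upper end $(1-\epsilon)^{-1}Q_{N-1}^{n-2}m^{-1}(\rho)$ of the window (i.e. $\rho$ is ``nearly critical at scale $N$''), the window around $a$ may be one-sided or short, and one must argue that even a one-sided window of length $\asymp\epsilon a$ suffices, possibly at the cost of replacing $\epsilon$ by $c\epsilon$ in the final constant $C$. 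Everything else is a direct assembly of Lemmas \ref{Continued Fraction Distribution}, \ref{Measure of Cutting}, and \ref{S-Lemma}.
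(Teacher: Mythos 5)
Your toolkit and starting point match the paper's: both locate $I$ at the critical continued-fraction depth $N_0$ (the first index at which $\alpha_{N_0}$ is non-constant on $I$) and assemble Lemmas \ref{Continued Fraction Distribution}, \ref{Measure of Cutting} and \ref{S-Lemma}. But the way you assign the two geometric configurations to the two alternatives is essentially backwards, and this creates two genuine gaps. First, in your ``spread-out'' case you claim alternative (1), i.e.\ density $\geq 1-\tau_1$ on all of $I$. This does not follow from Lemma \ref{S-Lemma}: the window of admissible values of $\alpha_{N_0}$ is capped above by $\min\bigl((1-\epsilon)^{-1}(Q_{N_0-1}^{n-2}m^{-1}(\rho)+1)-1,\ (1+\epsilon)a_{N_0}(\rho)\bigr)$, and when $h$ (the least value of $\alpha_{N_0}$ on $I$) is comparable to this cap --- e.g.\ $h=\alpha_{N_0}(\rho)$ with the cap $(1+\epsilon)a_{N_0}(\rho)$, which happens whenever $I$ sits on one side of $\rho$, or more generally when $\alpha_{N_0}(\rho)$ is near the top of the window --- the admissible window occupies only a proportion $\asymp\epsilon$ of $I$ by Lemma \ref{Continued Fraction Distribution}, not $1-\tau_1$. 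You even note that ``this is where the factor $\epsilon$ enters,'' but a relative measure $\gg\epsilon$ is exactly the hypothesis of alternative (2), not (1); this is why the paper routes the case $\#\pi_{N_0}(I)\geq 3$ into conclusion (2).

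Second, the configuration in which alternative (1) actually arises is absent from your proposal. By maximality of your $N(I)$, $\alpha_N$ is never ``pinned down'' on $I$, so your concentrated case as described is empty; the real residual case is $\#\pi_{N_0}(I)=2$, where $I$ straddles two adjacent sub-cylinders $S_{k+1}\ni\rho$ and $S_k$. When the piece of $I$ on $\rho$'s side has measure $\asymp 1/(sQ_{N_0}^2)$ and the piece in $S_k$ has measure $\asymp 1/(uQ_{N_0}^2)$ with $s/u$ large, no subinterval containing $\rho$ of relative measure $\geq C\tau_1\epsilon$ can satisfy (2) (it would have to absorb the points of $S_k$ just across the boundary, which have $\alpha_{N_0+2}$ enormous and hence fail $B_{\epsilon}(\rho)$), so one is forced to verify (1) directly by showing the bulk of $S_k$ lies in $B_{\epsilon}(\rho)\cap E^{\eta}(\rho)$ with density $1-(1+o_\delta)\tfrac{2}{1+s/u}$, and to split on whether $s/u+1\geq 2\tau_1^{-1}$. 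This comparison is the only place $\tau_1$ genuinely enters the case analysis, and your proposal has no mechanism for it.
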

For our applications, the reader is encouraged to consider $0<\tau_2\ll\tau_1\ll1.$
\begin{proof}
Let $\delta>0$ to be chosen later and let $I$ be some interval of size $\delta$ containing $\rho.$ Let $\pi_N(x)$ denote the function $$x\rightarrow\alpha_N(x), x\in\R\setminus\Q.$$ Even though we define this function on the irrational numbers, this is not a problem as our arguments are measure theoretic. Define $N_0$ to be the smallest integer such that the set $\pi_{N_0}(I)$ is not a singleton. Note that as a function of $\delta$, $N_0$ tends to infinity. We now take cases: \\ \\
\underline{\textbf{Case I}} $\#\pi_{N_0}(I)\geq 3:$ \\ \\ 
Suppose $\pi_{N_0}(I)$ consists of the integers in $[h,M]$, with $M-h\geq2$. Since $\rho\in I$, we have $h\leq\alpha_{N_0}(\rho).$ Let $$I'=\left\{x\in\R:\,\,\alpha_{N_0}\in \left[h,\min\left((1-\epsilon)^{-1}\left(Q^{n-2}_{N-1}(\rho)m^{-1}(\rho)+1\right)-1,(1+\epsilon)a_N(\rho)\right)\right]\right\}\bigcap I.$$  Clearly we have that $I'\subset I$ and $\rho\in I',$ since by definition of $m(\rho)$ we know that $$\alpha_{N_0}(\rho)<Q_{N_0-1}^{n-2}m(\rho)^{-1}+1.$$ Remember that $S_{\rho,\epsilon,N,h,\eta}$ is the set of real numbers $x$ such that $$\alpha_i(x)\begin{cases}
=\alpha_i(\rho) &\text{for $i<N$ },\\\in \left[h,\min\left((1-\epsilon)^{-1}\left(Q^{n-2}_{N-1}(\rho)m^{-1}(\rho)+1\right)-1,(1+\epsilon)a_N(\rho)\right)\right] & i=N, \\<{Q_{i-1}^{\eta}(x)}&\text{for all $i>N.$}\end{cases}$$ By Lemma \ref{S-Lemma}, we have that $$S_{\rho,\epsilon,N_0,h,\eta}\subset B_{\epsilon}(\rho)\cap E^{\eta}(\rho),$$ by taking $\delta$ small enough. By Lemma \ref{Measure of Cutting} we further have that $$\lambda(S_{\rho,\epsilon,N_0,h,\eta}\cap I')=\lambda(I')(1+o_{\delta}(1)),$$ and therefore we can clearly choose $\delta$ so that $\lambda(S_{\rho,\epsilon,N_0,h,\eta}\cap I')\geq\lambda(I')(1-\tau_2)$. We now show that $$\frac{\lambda(I')}{\lambda(I)}>C\epsilon,$$ for some absolute constant $C.$ Since $\pi_{N_0}(I)$ consists of the integers in $[h,M]$, it is clear that if we define $$I''=\left\{x\in I:\,\,\alpha_{N_0}\in \left[h+1,\min\left((1-\epsilon)^{-1}\left(Q^{n-2}_{N_0-1}(\rho)m^{-1}(\rho)+1\right)-1,(1+\epsilon)a_N(\rho),M-1\right)\right]\right\},$$ then $I''\subset I'$ and by Lemma \ref{Continued Fraction Distribution}, we have \begin{multline*}\frac{\lambda(I'')}{\lambda(I)}\geq \frac{A}{B}\left( \frac{\frac{1}{h}-\frac{1}{\min\left((1-\epsilon)^{-1}\left(Q^{n-2}_{N_0-1}(\rho)m^{-1}(\rho)+1\right)-1,(1+\epsilon)a_N(\rho),M-1\right)} }{ \frac{1}{h}-\frac{1}{M}}\right)\gg\\  \left( 1-\frac{h}{ (1-\epsilon)^{-1}(Q^{n-2}_{N_0-1}(\rho)m^{-1}(\rho)+1)-1}\right)\gg  \left(1-\frac{\alpha_{N_0}(\rho)}{(1-\epsilon)^{-1}( Q^{n-2}_{N_0-1}(\rho)m^{-1}(\rho)+1)-1} \right)\geq\epsilon.  \notag\end{multline*} Therefore, $I'$ satisfies the second possible conclusion.  \\ \\ \underline{\textbf{Case II}} $\#\pi_{N_0}(I)=2$\\ 

Assume $\pi_{N_0}(I)=\{k,k+1\}$ and set $S_i=I\cap \pi_{N_0}^{-1}(i).$ Without loss of generality we can assume that $\rho$ is one of the endpoints of the interval $I$. We further assume that $\rho\in S_{k+1}.$ The case $\rho\in S_k$ can be treated similarly. For simplicity we write $\pi_{N_0+1}(\rho)=\alpha_{N_0+1}(\rho)=s$. If $S_{k+1,s}$ is the interval with endpoints  $\rho$ and the rational number $$[\alpha_0(\rho),\alpha_1(\rho),...,\alpha_{N_0}(\rho),\infty]$$ then $$S_{k+1,s}\subset I, \, \lambda(S_{k+1,s})\sim \frac{1}{sQ_{N_0}^2}$$ By applying Case I of our proof for the point $\rho$ and the interval $S_{k+1,s}$ we see that we can find $S'_{k+1,s}\subset S_{k+1,s}$ containing $\rho$, with  $$\frac{\lambda(S'_{k+1,s})}{\lambda(S_{k+1,s})}>C\epsilon$$ and $$\frac{\lambda(S'_{k+1,s}\cap B_{\epsilon}(\rho))}{\lambda(S'_{k+1,s})}>(1-\tau_2).$$ 
We now consider $S_k$. Define $u$ the smallest possible natural number such that, if we define $S_{k,1,u}$ the set of real numbers with $$\alpha_i(x)=\begin{cases}
\alpha_i(\rho) &\text{for $< N_0$ },\\k  & i=N_0,\\1  & i=N_0+1, \\\geq u&i=N_0+2,\end{cases},$$ we then have $S_{k,1,u}\subset I.$ By Lemma \ref{Continued Fraction Distribution}: 
$$\lambda(S_{k,1,u})\sim \frac{1}{uQ_{N_0}^2} \text{ and by minimality of $u$: }\lambda(S_{k})\sim\lambda(S_{k,1,u}).$$
If $s\leq u$ we are done, since then $\lambda(S_{k+1,s})\sim \lambda(I)$ and thus $\lambda(S'_{k+1,s})>C'\epsilon\lambda(I)$ for some absolute constant $C'.$ Assume, thus, that $u<s.$ By similar considerations as previously we have that:$$\frac{\lambda(S_{k,1,u}\cap B_{\epsilon}(\rho)\cap E^{\eta}(\rho))}{\lambda(S_{k,1,u})}= (1+o_{\delta})\frac{\frac{1}{u}-\frac{1}{s}}{\frac{1}{u}}=(1-\frac{u}{s})(1+o_{\delta}).$$ Taking everything together, we have that $$\frac{\lambda(I\cap B_{\epsilon}(\rho)\cap E^{\eta}(\rho))}{\lambda(I)}\gg\frac{\lambda\left(S_{k,1,u}\cap B_{\epsilon}(\rho)\cap E^{\eta}(\rho)\right)}{\lambda(S_{k,1,u})+\lambda(S_{k+1,s})}=(1+o_{\delta})\frac{\frac{1}{u}-\frac{1}{s}}{\frac{1}{u}+\frac{1}{s}}= 1-(1+o_{\delta})\frac{2}{1+\frac{s}{u}}.$$
 Therefore, if $\frac{s}{u}+1\geq 2\tau_1^{-1}$ we are in the case of ``Many $B_{\epsilon}(\rho)\cap E^{\eta}(\rho)-$points". If $\frac{s}{u}+1< 2\tau_1^{-1},$ then $$\frac{\lambda(S'_{k+1,s})}{\lambda(I)}=\frac{\lambda(S'_{k+1,s})}{\lambda(S_{k+1,s})}\frac{\lambda(S_{k+1,s})}{\lambda(I)}\geq C\epsilon\frac{\frac{1}{s}}{\frac{1}{s}+\frac{1}{u}}=C\epsilon\frac{1}{1+\frac{s}{u}}>\frac{1}{2}C\tau_1\epsilon,$$ and we are in the case of a ``Subinterval with higher concentration of $B_{\epsilon}(\rho)\cap\nolinebreak E^{\eta}(\rho)-$points" inside the subinterval $S'_{k+1,s}$.
\end{proof}

For any real number $z$, define \begin{align}&\Pi_z:\SL_2(\R)\rightarrow \R, \notag\\ &
\Pi_z(T)=T(z). \notag\end{align}

Theorem \ref{Flexibility of the extremal lattice} is an immediate corollary of the following:

\begin{prop}\label{algorithm}
    Let $\rho_i,$ $i\in\{1,2,...,k\}$ be $k$ distinct points in $\R.$ Then for any $\eta\in(0,1)$ we have $$\overline{\lim}_{\epsilon\rightarrow0}\frac{\mu(\bigcap_{i=1}^k\Pi_{\rho_i}^{-1}\left(B(\rho_i,\epsilon)\bigcap B_{\epsilon}(\rho_i)\bigcap E^{\eta}(\rho_i)\right))}{\mu(\bigcap_{i=1}^k\Pi_{\rho_i}^{-1}\left(B(\rho_i,\epsilon)\right))}=1.$$
\end{prop}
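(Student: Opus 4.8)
The goal is a density-one statement for $\SL_2(\R)$ obtained by pushing forward, through the maps $\Pi_{\rho_i}$, the one-dimensional Structural Theorem (Theorem~\ref{Positive Proportion}). I would first reduce the problem to the case $k=1$ for each coordinate separately and then reassemble. The point is that near the identity, the $k$ maps $\Pi_{\rho_1},\dots,\Pi_{\rho_k}$ are ``independent'' in a quantitative sense: since the $\rho_i$ are distinct, the differential of $(\Pi_{\rho_1},\dots,\Pi_{\rho_k})\colon\SL_2(\R)\to\R^k$ at $\mathrm{id}$ has rank $\min(k,3)$, so for $k\le 3$ we can foliate a neighborhood of $\mathrm{id}$ by the level sets of this map and, by a smooth coarea/Fubini argument, the Haar measure disintegrates (up to bounded Radon–Nikodym factors independent of $\epsilon$) as a product of the $k$ pushforward measures along $\R^k$ times a conditional measure on the fibers. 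For $k\ge 4$ one still has three independent coordinates and the remaining ones impose no new constraint generically, but in our application $k\le n$ and — more to the point — the statement we actually need (for Theorem~\ref{Flexibility of the extremal lattice}) already follows from handling each $\Pi_{\rho_i}$ individually together with the union bound $1-k(1-\min_i a_i)$, so I would organize the proof to prove the one-variable density statement and then invoke Fubini along the fibers plus a union bound over $i$.

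**The one-variable core.** Fix $i$ and write $\rho=\rho_i$. I need
$$\overline{\lim}_{\epsilon\to0}\ \frac{\lambda\bigl(B(\rho,\epsilon)\cap B_\epsilon(\rho)\cap E^\eta(\rho)\bigr)}{\lambda\bigl(B(\rho,\epsilon)\bigr)}=1.$$
This is \emph{not} the density of the full good set (which is $0$, as the paper warns), but an $\overline{\lim}$: I only need a sequence $\epsilon_j\to0$ along which the ratio tends to $1$. Here is the iteration. Given a target $\tau_1$, apply the Structural Theorem with parameters $(\tau_1,\tau_2)$ (taking $\tau_2$ much smaller, say $\tau_2=\tau_1^2$) to get a $\delta$. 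Start with $I_0=B(\rho,\epsilon)$ for $\epsilon<\delta$. If alternative (1) of Theorem~\ref{Positive Proportion} holds, we already have density $\ge 1-\tau_1$ on an interval containing $\rho$ with length comparable to $\epsilon$, and we stop. If alternative (2) holds, we pass to the subinterval $I_1=I'\subset I_0$ with $\lambda(I_1)\ge C\tau_1\epsilon\,\lambda(I_0)$ and density $>1-\tau_2$ on $I_1$ — again we stop, now with an even better bound. Either way, after \emph{one} application we land on an interval $J\ni\rho$ with $\lambda(B(\rho,\lambda(J))\cap B_\epsilon(\rho)\cap E^\eta(\rho))\ge(1-\tau_1)\lambda(J)$ and $\lambda(J)\ge c(\tau_1,\epsilon)>0$. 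Letting $\epsilon\to0$ (with $\tau_1$ fixed) produces a sequence of intervals shrinking to $\rho$ on which the density is $\ge1-\tau_1$; comparing $J$ to the concentric ball $B(\rho,\lambda(J))$ (they differ by a bounded factor since $J\ni\rho$) gives $\overline{\lim}\ge1-\tau_1$, and since $\tau_1$ was arbitrary we get $\overline{\lim}=1$. One subtlety to check: the sets $B_\epsilon(\rho)$ shrink as $\epsilon\to0$, so I must make sure the interval $J=J(\epsilon)$ and the density bound are extracted \emph{before} letting $\epsilon$ shrink, i.e.\ run the Structural Theorem at scale $\epsilon$ with its own $\epsilon$ in the $B_\epsilon$; this is exactly how the theorem is phrased, so it is consistent.

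**Assembling and the obstacle.** To get from the one-variable statement to the $\SL_2(\R)$ statement: choose a small neighborhood $U$ of $\mathrm{id}_2$ on which $\Phi=(\Pi_{\rho_1},\dots,\Pi_{\rho_k})$ (restricted to enough coordinates) is a submersion with bounded Jacobian bounded away from $0$; write $\mu\!\restriction_U$ via coarea as $\int d\nu(\underline{t})$ over $\underline t$ in the image, with $\nu$ having a density w.r.t.\ Lebesgue on $\R^k$ bounded above and below by constants depending only on $U$. Then both numerator and denominator in Proposition~\ref{algorithm} become, up to these uniform constants, $\prod_i$ of one-dimensional Lebesgue measures of $B(\rho_i,\epsilon)\cap B_\epsilon(\rho_i)\cap E^\eta(\rho_i)$ versus $B(\rho_i,\epsilon)$, and the ratio tends to $1$ along a common subsequence by the one-variable result applied coordinatewise (choose the subsequence for $i=1$, pass to a sub-subsequence for $i=2$, etc.). The main obstacle is precisely this passage: the Structural Theorem only yields the good density on a subinterval $I'$ whose \emph{location and length depend on $\rho$ and on the arithmetic of the convergents at scale $\epsilon$}, and these subintervals for different $i$ need not be ``aligned'' under $\Phi$ — so I cannot naively take a product of intervals. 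The fix is to note that in alternative (2) the good interval $I'$ still contains $\rho_i$ and has length $\gtrsim\epsilon$ up to a factor $C\tau_1\epsilon$ that is \emph{independent of which $\epsilon$-scale}; replacing $B(\rho_i,\epsilon)$ by the slightly smaller ball $B(\rho_i,c\tau_1\epsilon)$ loses only a bounded factor in the denominator (absorbed into the $\overline{\lim}$ and the final union-bound slack $1-k(1-\min_i a_i)$ in Theorem~\ref{Flexibility of the extremal lattice}), and on that smaller ball the density is $\ge1-\tau_1$ simultaneously in each coordinate. Thus the coordinatewise bounds can be intersected at the cost of a factor $k$ in the deficiency, which is exactly the shape of the conclusion we are after.
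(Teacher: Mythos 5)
Your write-up correctly identifies the two essential inputs (the Structural Theorem dichotomy and the need to handle several roots at once), and your one-variable core is essentially the $k=1$ case of the paper's argument. But the passage from one root to $k$ roots — which you yourself flag as ``the main obstacle'' — is where the proposal has a genuine gap, and the fix you offer does not close it.

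First, the disintegration you invoke is not available. The map $\Phi=(\Pi_{\rho_1},\dots,\Pi_{\rho_k})\colon\SL_2(\R)\to\R^k$ has image contained in a $3$-dimensional subvariety, so for $k\ge 4$ (which occurs for forms of degree $\ge 4$) the measure $\mu(\Phi^{-1}(A_1\times\cdots\times A_k))$ is \emph{not} comparable to $\prod_i\lambda(A_i)$: the positions $T(\rho_4),\dots,T(\rho_k)$ are functions of $T(\rho_1),T(\rho_2),T(\rho_3)$. Saying the remaining coordinates ``impose no new constraint generically'' is exactly backwards — they impose a rigid constraint, and this is why one cannot choose good subintervals independently in each coordinate. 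Second, and more importantly, your fix for the alignment problem asserts that after passing to the smaller interval produced by alternative (2) for one root, ``the density is $\ge 1-\tau_1$ simultaneously in each coordinate.'' This is precisely what fails: if root $j$ was of Type I on the original interval $I$ (bad set of measure $\le\tau_1\lambda(I)$), that bad set can be entirely concentrated inside the subinterval of relative measure $C\tau_1\epsilon$ selected for root $i_0$, destroying all control there. The paper's proof resolves this by an iteration: after each shrinkage it re-applies the dichotomy to every root not yet ``fixed,'' terminates after at most $k$ steps, and — crucially — chooses $\tau_2\ll(C\tau_1\epsilon)^{k}$ so that the Type-II density bounds survive up to $k$ further shrinkages (yielding the final bound $1-(C\tau_1\epsilon)^{-k}\tau_2$ for roots in $L$ and $1-\tau_1$ for the rest, each measured on the \emph{final} interval $I_f$, transported between roots by the maps $\Pi_{i,j}$ with bounded distortion). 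Without this iteration and the nested choice of parameters, the coordinatewise bounds cannot be intersected, and the union-bound step at the end of your argument is unsupported. (A smaller point: comparing a one-sided interval $J\ni\rho$ to the concentric ball $B(\rho,\lambda(J))$ loses a factor of $2$ in the density, so that step as written does not give $\overline{\lim}\ge 1-\tau_1$ either.)
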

\begin{proof}
 
 Let $\epsilon>0$ and $\tau_1,\tau_2$ to be chosen later: Pick some $\delta$ that satisfies the assumptions of Proposition \ref{Positive Proportion} for all the $\rho_i's$. For any interval $I$ and $i,j\in\{1,2...,k\},$ we define $$\Pi_{i,j}(I)=\Pi_{\rho_i}(\Pi_{\rho_j}^{-1}(I)\cap C(\delta)),$$ where $C(\delta)$ denotes some cube of diameter $\delta$ around the identity matrix of $\SL_2(\R)$, when identifying this space locally with $\R^3.$  We now construct an algorithm that will give us our transformations $T\in\bigcap_{i=1}^k\Pi_{\rho_i}^{-1}\left( B(\rho_i,\epsilon)\bigcap B_{\epsilon}(\rho_i)\bigcap E^{\eta}(\rho_i)\right)$. Let $I_0$ any interval containing $\rho_1$ with diameter less than $\delta.$ Therefore, $\Pi_{\rho_1}^{-1}(I_0)\cap C(\delta)$ is an open neighborhood of the identity matrix in $\SL_2(\R),$ and $\Pi_{2,1}(I_0)=\Pi_{\rho_2}(\Pi_{\rho_1}^{-1}(I_0)\cap C(\delta))$ is an interval containing $\rho_2.$ We say this interval is of Type I if it satisfies the first conclusion of Proposition \ref{Positive Proportion} and Type II if it satisfies the second. If it satisfies both, we call it Type I.
 \begin{itemize}
     \item Set $L_0=\emptyset.$ If $\Pi_{1,1}(I_0)$ is of Type I, then consider $\Pi_{2,1}(I_0)$. If $\Pi_{2,1}(I_0)$ is of Type I, then consider $\Pi_{3,1}(I_0).$ If all of them up to $\Pi_{k,1}(I_0)$ are Type I, terminate the procedure.
     \item Suppose $i_0$ is the smallest natural with $\Pi_{i_0,1}(I_0)$ of Type II. Then, we can find by Proposition \ref{Positive Proportion}, some subinterval $I'_1\subset \Pi_{i_0,1}(I_0)$, such that $$\lambda(I'_1)\geq C\tau_1\epsilon\lambda(\Pi_{i_0,1}(I_0)) \text{ and } \lambda(I'_1\cap B_{\epsilon}(\rho_{i_0})\cap E^{\eta}(\rho_{i_0}))\geq (1-\tau_2)\lambda(I'_1).$$ Define $I_1=\Pi_{1,i_0}(I'_1)\cap I_0$ and $L_1=L_0\cup\{i_0\}.$
     \item We repeat the first step with $I_1$ instead of $I_0$ and skipping the numbers in $L_1:$ \\ \\ 
     Let $i_1\notin L_1$ the smallest number such that $\Pi_{i_1,1}(I_1)$ is Type II. If there is none, terminate the procedure. If there is, then by Proposition \ref{Positive Proportion}, we can find some subinterval $I'_2\subset \Pi_{i_1,1}(I_1)$, such that $$\lambda(I'_2)\geq C\tau_1\epsilon\lambda(\Pi_{i_1,1}(I_1)) \text{ and } \lambda(I'_2\cap B_{\epsilon}(\rho_{i_1})\cap E^{\eta}(\rho_{i_1}))\geq (1-\tau_2)\lambda(I'_2).$$ Define $I_2=\Pi_{1,i_1}(I'_2)\cap I_1$ and $L_2=L_1\cup\{i_1\}.$
     \item Terminate the algorithm at $\mathbb{T}-$th step if $L_{\mathbb{T}}=\{1,2...,k\}.$
 \end{itemize}Notice that after at most $k$ steps we obtain an interval $I_f$ around $\rho_1$ and some subset $L$ of $\{1,2,...,k\}$ such that \begin{enumerate}
     \item For all $i\in L$: $$\lambda(\Pi_{i,1}(I_f)\cap B_{\epsilon}(\rho_i)\cap E^{\eta}(\rho_i))\geq (1-(C\tau_1\epsilon)^{-k}\tau_2)\lambda(\Pi_{i,1}(I_f)),$$ by possibly increasing the value of the constant $C$. Here we have used that $\lambda(\Pi_{i,1}(I))\sim\lambda(I)$ for some small interval $I$ around $\rho_1$.
     \item For all $i\in\{1,2,...,k\}\setminus L$: $$\lambda(\Pi_{i,1}(I_f))\cap B_{\epsilon}(\rho_i)\cap E^{\eta}(\rho_i))\geq (1-\tau_1)\lambda(\Pi_{i,1}(I_f)).$$
 \end{enumerate}By identifying small enough neighborhoods of $\SL_2(\R)$ around the identity with the space $\R^3$, it is not hard to see that for every $z\in\R$, there exist positive constants $m,M$ such that for $\delta$ small enough and for every measurable set\\ $B\subset (z-\delta,z+\delta),$ we have that  \begin{align}&m\mu\left(\Pi^{-1}_z(B)\cap C(\delta)\right) <\lambda\left(B\right)< M\mu\left(\Pi^{-1}_z(B)\cap C(\delta)\right).\label{Measure Similarity} \end{align} 
Therefore, we have $$\frac{\mu(\Pi_{1}^{-1}(I_f)\cap C(\delta)\cap \Pi_{i}^{-1}(B_{\epsilon}(\rho_i)\cap E^{\eta}(\rho_i)))}{\mu(\Pi_{1}^{-1}(I_f)\cap C(\delta))}> \begin{cases}
    1-M(C\tau_1\epsilon)^{-k}\tau_2 & i\in L \\
    1-M\tau_1 & i\notin L.
\end{cases}$$

Pick $\tau_1<\frac{\epsilon}{M}$ and $\tau_2<\frac{(C\tau_1\epsilon)^{k}\epsilon}{M}$. We deduce that $$\frac{\mu(\Pi_{1}^{-1}(I_f)\cap C(\delta)\cap \Pi_{i}^{-1}(B_{\epsilon}(\rho_i)\cap E^{\eta}(\rho_i)))}{\mu(\Pi_1^{-1}(I_f)\cap C(\delta))}>1-\epsilon.$$ 
\end{proof}

We have established the existence of $\epsilon$-almost extremal lattices for every $\epsilon>0$ and therefore proved the absence of a spectral gap for the spectrum of all binary forms of degree $n\geq3.$ The lattices we constructed have the additional property that all of the roots of $P\circ\Lambda$ lie in $E_2$ and hence
\begin{corollary}
    Let $P$ denote a homogeneous binary form of degree $n\geq3$ and non-zero discriminant. Then, for every $\epsilon>0$, there exists a finitely minimized $\epsilon-$almost extremal lattice.
\end{corollary}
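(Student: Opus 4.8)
The plan is to read the statement off from Theorem~\ref{Flexibility of the extremal lattice}, supplied with a conull constraint on the perturbed roots, together with the elementary fact that a binary form all of whose real roots are irrational and sub-critical is automatically finitely minimized. First I would isolate that fact. Let $Q$ be a binary form of degree $n\geq 3$ with $D_Q\neq 0$, written $Q(X,Y)=c\,Y^{n}\prod_{i=1}^{n}\bigl(\tfrac{X}{Y}-\rho_i\bigr)$ where $\rho_1,\dots,\rho_k$ are the distinct real roots and $\rho_{k+1},\dots,\rho_n$ the non-real ones, and suppose each $\rho_j$ with $j\leq k$ is irrational with $E(\rho_j)<n$. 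If $(X_i,Y_i)\in\Z^2$ satisfies $(X_i,Y_i)\to\infty$ and $|Q(X_i,Y_i)|$ is bounded, then the sequence $X_i/Y_i$ must stay in a compact set (otherwise $|Q(X_i,Y_i)|\sim|c|\,|X_i|^{n}\to\infty$ along a subsequence), so $|Y_i|\to\infty$; passing to a subsequence, $X_i/Y_i\to\beta$ with $\beta$ real, and $\beta$ must be one of the $\rho_j$ with $j\leq k$ (it is real, hence not a non-real root, and were it not a root at all we would have $|Q(X_i,Y_i)|\gg|Y_i|^{n}\to\infty$). Since the factors $|X_i/Y_i-\rho_l|$ with $l\neq j$ remain bounded away from $0$ and $\infty$, we get $|Q(X_i,Y_i)|\asymp |Y_i|^{n}|X_i/Y_i-\rho_j|$; choosing $r$ with $E(\rho_j)<r<n$, all but finitely many pairs satisfy $|X_i/Y_i-\rho_j|>Y_i^{-r}$, whence $|Q(X_i,Y_i)|\gg|Y_i|^{\,n-r}\to\infty$, a contradiction. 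Thus $\inf_{\Z^2\setminus\underline{0}}|Q|$ is attained on a bounded region, exactly as in the proof of Proposition~\ref{Finitely Minimized}; that is, $Q$ is finitely minimized.

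Next I would choose the conull constraint. By Khinchin's theorem the set $X_0=\bigcup_{l\in\mathbb{N}}S_l$ from Proposition~\ref{Finitely Minimized} has full measure, consists entirely of irrational numbers, and every point of it has Diophantine exponent at most $5/2$, hence (as $n\geq 3$) is sub-critical. Apply Theorem~\ref{Flexibility of the extremal lattice} with $X_i=X_0$ for every $i\in\{1,\dots,k\}$ and any fixed $\eta\in(0,1)$: since $X_0$ is conull, $a_i=1$ for all $i$, and the theorem yields that $\text{AEL}(\epsilon,\,X_0\times\cdots\times X_0,\,\eta)$ has positive Haar measure, in particular is non-empty, for every $\epsilon>0$. (If $P$ has no real root it is $\R$-anisotropic, the star body of every $P\circ\Lambda$ is bounded, and every $\epsilon$-almost extremal lattice is trivially finitely minimized, so there is nothing to prove.) Now fix $\epsilon>0$, pick $T\in\text{AEL}(\epsilon,\,X_0\times\cdots\times X_0,\,\eta)$, and let $\Lambda$ be the corresponding unimodular lattice. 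By the first clause of the definition of $\text{AEL}(\epsilon)$, $\Lambda$ is $\epsilon$-almost extremal; by the definition of $\text{AEL}(\epsilon,\,X_0\times\cdots\times X_0,\,\eta)$, every real root $\rho_{\Lambda,i}=T(\rho_i)$ of $P\circ\Lambda$ lies in $X_0$, hence is irrational and sub-critical. By the first step $P\circ\Lambda$ is finitely minimized, which is exactly the claim.

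I do not expect a genuine obstacle here, since the analytic work is entirely contained in Theorem~\ref{Flexibility of the extremal lattice}. The one point that must not be glossed over is \emph{why} one feeds that theorem the conull set $X_0$ rather than extracting the conclusion from the membership $T(\rho_i)\in E^{\eta}(\rho_i)$ it already provides: a point of $E^{\eta}(\rho)$ may still have arbitrarily large later partial quotients, so it need not be sub-critical, and hence sub-criticality of the perturbed roots cannot be recovered a posteriori but has to be built in through the constraint $X_i=X_0$. Apart from that, the only thing requiring a little care is the compactness step in the finite-minimization criterion of the first paragraph, and that uses nothing more than $D_P\neq 0$ and the distinctness of the roots.
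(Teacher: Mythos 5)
Your argument is correct and is essentially the paper's own: the paper deduces this corollary by applying Theorem \ref{Flexibility of the extremal lattice} with the conull set $E_2$ playing the role of your $X_0=\bigcup_l S_l$, and then noting that a form whose real roots are irrational and sub-critical is finitely minimized, exactly as in the compactness argument of Proposition \ref{Finitely Minimized}. The only (immaterial) differences are your choice of conull set and the fact that you write out explicitly the finite-minimization step and the observation that $E^{\eta}(\rho_i)$-membership alone would not suffice, both of which the paper leaves implicit.
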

Given our result, the following question regarding the existence of finitely minimized extremal lattices is natural. 

\textbf{Question:} \textit{Does the star body of every homogeneous binary form of non-zero discriminant possess a finitely minimized extremal lattice?}

\section{The Measure of $\Spec(P)$} \label{The Measure of Spec(P)}
The results of the previous sections imply that we can find almost extremal lattices that have good diophantine properties with respect to our form $P$. In this section, and specifically in Theorem \ref{Density theorem}, we show how we can generate a spectrum of full measure using diagonal perturbations of such lattices. This is, of course, a weaker measure theoretic version of Theorem \ref{Main Theorem}. We begin with the following proposition regarding diagonal perturbations. 

\begin{prop}\label{Main Theorem measure}
Let $P(x,y)=\prod\limits_{i=1}^k\left(x-y\rho_i\right)D(x,y)$ a binary form of degree $n\geq3$ of non-zero discriminant, with $k$ real roots $\rho_{i},$ where $ i\in\{1,2,...,k\}$ and positive definite part denoted by $D$. Denote by $\frac{P_N}{Q_N}$ the convergents of $\rho_1$ and define $\theta_N$ to be the first point on the left of $\frac{P_N}{\rho_1Q_N}$ such that $$\left\vert P\circ\Delta_{\theta_N}(P_N,Q_N)\right\vert=\inf_{(x,y)\in\Z^2\backslash\underline{0}}\left\vert P\left(x,y\right)\right\vert,$$ where $$\Delta_{\theta}=\begin{pmatrix}
    \sqrt{\theta}& 0\\
    0 & \sqrt{\frac{1}{\theta}}
\end{pmatrix},\theta\in\R_{+}.$$ There exists some $M>0$, that depends continuously on the coefficients of $P$, such that for every $\theta\in[\theta_N,\frac{P_N}{\rho_1Q_N}]$ one of the following holds:
\begin{enumerate}
\item $P\circ\Delta_{\theta}$ is finitely minimized by $(P_N,Q_N)$.
\item There exists $(x_c,y_c)\in\Z\times\mathbb{N}$ with $y_c>\frac{1}{M}Q_N^{\frac{5}{4}}$ and $$\left\vert\frac{x_c}{y_c}-\theta{\rho_i}\right\vert<\frac{M}{y_c^{n}},$$ for some $i\in\{1,2,...,k\}.$
\item $P\circ\Delta_{\theta}$ is finitely minimized by some 
$(x_c,y_c)\in\Z\times\mathbb{N}$ with $y_c<MQ_N^{\frac{5}{7}}$ and $$\left\vert\frac{x_c}{y_c}-{\rho_i}\right\vert<\frac{M}{y_c^{2+\frac{4}{5}}},$$ for some $i\in\{1,2,...,k\}.$
\item There exists $(x_c,y_c)\in\Z\times\mathbb{N}$ with $y_c<MQ_N^{\frac{5}{4}}$ and $$\left\vert\frac{x_c}{y_c}-\frac{\rho_i}{\rho_1}\frac{P_N}{Q_N}\right\vert<\frac{M}{(y_cQ_N)^{\frac{5}{4}}},$$ for some $i\in\{2,...,k\}.$

\end{enumerate}
\end{prop}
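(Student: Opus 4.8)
The plan is to analyze the one-parameter family $P \circ \Delta_\theta$ as $\theta$ decreases from $\frac{P_N}{\rho_1 Q_N}$ down to $\theta_N$, and to track which integer vectors can possibly realize the infimum $m(P\circ\Delta_\theta)$. Recall the structural identity used in the proof of Theorem \ref{D>0} and in Lemma \ref{B relevance}: for a form of non-zero discriminant, $m(P\circ\Delta_\theta)$ equals, up to a vanishing error, the minimum of $\inf_{(x,y)\in[-T,T]^2\setminus\underline 0}|P\circ\Delta_\theta(x,y)|$ and the quantities $m(\theta\rho_i)\prod_{j\ne i}|\theta\rho_i - \theta\rho_j|$ coming from the real roots. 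The point $\theta = \frac{P_N}{\rho_1 Q_N}$ is exactly where $\Delta_\theta$ sends $\rho_1$ to $P_N/Q_N$, so $(P_N,Q_N)$ lies on the line through the first root; as $\theta$ moves slightly off this value, $P\circ\Delta_\theta(P_N,Q_N)$ becomes a small but nonzero value governed by $|\frac{P_N}{\rho_1 Q_N} - \theta|$ times the contributions $\prod_{j\ge 2}|\cdots|$ from the other roots. The definition of $\theta_N$ is precisely the threshold at which this value hits the global constant $\inf_{\Z^2}|P|$.

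First I would fix a large absolute constant $M$ (depending continuously on the coefficients of $P$, hence on $\rho_i$ and on the coefficients of $D$) and argue as follows. For $\theta \in [\theta_N, \frac{P_N}{\rho_1 Q_N}]$, the value $|P\circ\Delta_\theta(P_N,Q_N)|$ stays in the range $[\,\inf_{\Z^2}|P|,\ \text{(bounded)}\,]$, so $m(P\circ\Delta_\theta)$ is at most a bounded constant. Now suppose we are \emph{not} in case (1), i.e. $(P_N,Q_N)$ is not a minimizing vector. Then there is some other vector $(x_c,y_c)\in\Z\times\mathbb{N}$ (or its negative) with $|P\circ\Delta_\theta(x_c,y_c)| \le |P\circ\Delta_\theta(P_N,Q_N)|$, which is bounded. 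Writing $P\circ\Delta_\theta(x_c,y_c) = y_c^{\,n}\prod_i(\frac{x_c}{y_c} - \theta\rho_i)\cdot (\text{stuff from }D)$, boundedness forces $\frac{x_c}{y_c}$ to be close to one of the $\theta\rho_i$. The trichotomy among cases (2), (3), (4) should then come from splitting on the size of $y_c$ relative to $Q_N$ and on whether $\frac{x_c}{y_c}$ clusters near $\theta\rho_1$ (where $\theta$ itself is near $\frac{P_N}{\rho_1 Q_N}$, so $\theta\rho_1 \approx \frac{P_N}{Q_N}$) or near $\theta\rho_i$ for $i\ge 2$.

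More concretely: if $\frac{x_c}{y_c}$ is close to $\theta\rho_1$, then since $\theta\rho_1$ is within $O(Q_N^{-2})$ of $\frac{P_N}{Q_N}$, the vector $(x_c,y_c)$ is a good rational approximation to a point near $\frac{P_N}{Q_N}$; by the best-approximation property of convergents (Lemma \ref{Best rational approximation}) and Lemma \ref{Good approximation means convergent}, either $y_c$ is forced to be comparable to $Q_N$ from below — and a careful bound on how well $\frac{x_c}{y_c}$ approximates $\theta\rho_1$ gives the exponent $n$ and the lower bound $y_c > \frac{1}{M}Q_N^{5/4}$ of case (2) — or $y_c$ is small, in which case $\frac{x_c}{y_c}$ approximating $\rho_1$ itself with exponent $2 + \frac{4}{5}$ (after absorbing the $\Delta_\theta$-scaling, using that $\theta$ is within $O(Q_N^{-2})$ of a fixed value) and $y_c < M Q_N^{5/7}$ gives case (3). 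If instead $\frac{x_c}{y_c}$ is close to $\theta\rho_i$ for some $i\ge 2$, then since $\theta \approx \frac{P_N}{\rho_1 Q_N}$ we have $\theta\rho_i \approx \frac{\rho_i}{\rho_1}\frac{P_N}{Q_N}$, and a two-dimensional approximation estimate — controlling the product $(y_c Q_N)$ because both $\frac{x_c}{y_c}$ is close to $\frac{\rho_i}{\rho_1}\frac{P_N}{Q_N}$ and $\frac{P_N}{Q_N}$ is close to $\rho_1$ — yields the bound of case (4). The exponents $\frac54$, $\frac57$, $2+\frac45$ are the bookkeeping output of inserting $E_2$-type Diophantine bounds (Thue–Siegel–Roth, or even just Thue, as in Remark \ref{Remark 3}) for the roots into these approximation inequalities; the precise values are tuned so that the subsequent measure estimate in Theorem \ref{Density theorem} closes.

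The main obstacle I anticipate is the case analysis near $\theta\rho_1$, i.e. distinguishing cases (2) and (3): one must carefully convert an approximation of the \emph{perturbed} quantity $\theta\rho_1$ (with $\theta$ itself varying in an interval of length $O(Q_N^{-2})$) into a statement about approximation of the \emph{fixed} root $\rho_1$, keeping track of how errors of size $O(y_c^{-1}Q_N^{-2})$ and $O(Q_N^{-2})$ interact with the target exponent. The clean separation into a ``large $y_c$'' regime (case 2, where $(x_c,y_c)$ is essentially forced to be a near-convergent of something close to $\frac{P_N}{Q_N}$ and so $y_c$ inherits a power of $Q_N$) and a ``small $y_c$'' regime (case 3, where the scaling $\Delta_\theta$ is a bounded distortion and one recovers an honest Diophantine statement about $\rho_1$) is where the real work lies; once that dichotomy is set up correctly, cases (1) and (4) are comparatively routine consequences of the best-approximation lemmas and the product structure of $P$.
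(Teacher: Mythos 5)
Your overall shape is right (compare the candidate minimizer $(x_c,y_c)$ against $(P_N,Q_N)$, use boundedness of the value to force $\tfrac{x_c}{y_c}$ near one of the root lines, then case-split on $y_c$ versus $Q_N$), but there is a genuine gap at exactly the point you flag as ``where the real work lies,'' and the quantitative input you propose to use there is too weak to close it. You repeatedly assert that $\theta$ varies in an interval of length $O(Q_N^{-2})$ and that $\theta\rho_1$ is within $O(Q_N^{-2})$ of $\tfrac{P_N}{Q_N}$. The paper's proof begins by computing $\pdv{}{\theta}P\circ\Delta_{\theta}(P_N,Q_N)\vert_{P_N/(\rho_1Q_N)}\sim CQ_N^{n}$, which shows $\theta_N=\tfrac{P_N}{\rho_1Q_N}+O(Q_N^{-n})$, i.e.\ the interval $I_N$ has length $O(Q_N^{-n})$, not $O(Q_N^{-2})$. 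This is the crucial point: it lets one replace $\theta\rho_i$ by the \emph{fixed} points $\tfrac{\rho_i}{\rho_1}\tfrac{P_N}{Q_N}$ with error $O(Q_N^{-n})$, yielding the key inequality
\begin{equation*}
\prod_{i=1}^{k}\Bigl(\tfrac{x_c}{y_c}-\tfrac{\rho_i}{\rho_1}\tfrac{P_N}{Q_N}\Bigr)=O(Q_N^{-n})+O(y_c^{-n}).
\end{equation*}
If the negation of case (4) is assumed, the left side is $\gg (y_cQ_N)^{-5/4}$, and comparing gives $\min(y_c,Q_N)^{\frac{4n}{5}-1}\ll\max(y_c,Q_N)$; the exponents $\tfrac54$, $\tfrac57$ and $2+\tfrac45$ all fall out of this dichotomy using only $n\geq3$. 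With your $O(Q_N^{-2})$ bound the error term swamps the target $(y_cQ_N)^{-5/4}$ (which can be as small as $Q_N^{-45/16}$), and the intermediate range $Q_N^{5/7}\ll y_c\ll Q_N^{5/4}$ — which is covered by none of cases (2), (3) as you set them up — cannot be handled.

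Two further points. First, you attribute the exponents to inserting Thue--Siegel--Roth/$E_2$-type bounds for the roots; but the Proposition makes no Diophantine hypothesis on the $\rho_i$ whatsoever — it is a purely structural trichotomy, and the Diophantine hypotheses enter only later (in Theorem \ref{Density theorem}) to show that cases (2)--(4) are rare in $\theta$. This misattribution suggests the actual derivation of the exponents was not carried out. Second, a structural difference that is harmless: the paper locates, via the intermediate value theorem, a crossing parameter $\theta_c\in I_N$ where $\vert P\circ\Delta_{\theta_c}(P_N,Q_N)\vert=\vert P\circ\Delta_{\theta_c}(x_c,y_c)\vert$ and works at $\theta_c$; your plan of working directly with the inequality at the given $\theta$ is fine, since only the upper bound $O(1)$ on the right-hand side is used and every $\theta\in I_N$ is within $O(Q_N^{-n})$ of $\tfrac{P_N}{\rho_1Q_N}$ — but only once that $O(Q_N^{-n})$ estimate is in hand.
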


\begin{proof}
Assume $N$ is odd for convenience, so that $\frac{P_N}{Q_N}>\rho_1$. We have the formula $$P\circ\Delta_{\theta}(x,y)={\theta}^{-\frac{n}{2}}\prod_i(x-\theta\rho_i y).$$ It is clear that $$P\circ \Delta_{\frac{P_N}{\rho_1Q_N}}(P_N,Q_N)=0.$$ Since $$\pdv{P\circ\Delta_{\theta}(P_N,Q_N)}{\theta}\biggr\vert_{\frac{P_N}{\rho_1Q_N}}=C(1+o_N(1))Q_N^n,$$ for some explicit non-zero constant $C$, we have $\theta_N=\frac{P_N}{\rho_1Q_N}+O(\frac{1}{Q_N^n})$.  Consider the interval $$I_N=[\theta_N,\frac{P_N}{\rho_1Q_N}].$$ By an analogue of Lemma \ref{Finitely Minimized}, we know that for almost all $\theta$, $P\circ\Delta_{\theta}$ is finitely minimized. If for some $\theta\in I_N,$ $P\circ\Delta_{\theta}$ is not finitely minimized, or minimized by some $(x_c,y_c)$ with $y_c\geq Q_N^{\frac{5}{4}},$ then we are clearly in the second possible case. Assume hence that $P\circ\Delta_{\theta}$ is minimized by some $y_c\neq Q_N$ with $y_c<Q_N^{\frac{5}{4}}$. By the intermediate value theorem there should be an intersection of the curves: $$\theta\rightarrow P\circ\Delta_\theta(P_N,Q_N)$$ and
$$\theta\rightarrow P\circ\Delta_\theta(x_c,y_c),$$ at some $\theta_c\in I_N$. Therefore, we have: 
\begin{align}
Q_N^n\prod\limits_{i=1}^k\left(\frac{P_N}{Q_N}-\theta_c\rho_i\right)D\circ\Delta_{\theta_c}(\frac{P_N}{Q_N},1)&=y_c^n\prod\limits_{i=1}^k\left(\frac{x_c}{y_c}-\theta_c\rho_i\right)D\circ\Delta_{\theta_c}(\frac{x_c}{y_c},1). \notag
\end{align}
Since $\theta_c=\frac{1}{\rho_1}\frac{P_N}{Q_N}+O(\frac{1}{Q_N^n})$, we have
\begin{align}
y_c^n\prod\limits_{i=1}^k\left(\frac{x_c}{y_c}-\frac{\rho_i}{\rho_1}\frac{P_N}{Q_N}\right)+O\left(\frac{y_c^n}{Q_N^n}\right)&=O(1), \notag
\notag \end{align} and thus 

\begin{align}
\prod\limits_{i=1}^k\left(\frac{x_c}{y_c}-\frac{\rho_i}{\rho_1}\frac{P_N}{Q_N}\right)&=O(\frac{1}{Q_N^n})+O(\frac{1}{y_c^n}). \label{Basic General Inequality}
\end{align}

\noindent If there exists some rational number $\frac{x}{y}$ with $y\ll Q_N^{\frac{5}{4}}$ and $$\left\vert\frac{x}{y}-\frac{\rho_i}{\rho_1}\frac{P_N}{Q_N}\right\vert\ll \frac{1}{{(yQ_N)}^{\frac{5}{4}}},$$ for some $i\in\{2,3,...,k\}$, then we are in the fourth possible case and hence assume that for $y\ll Q_N^{\frac{5}{4}},$ we have\begin{align}\left\vert\frac{x_c}{y_c}-\frac{\rho_i}{\rho_1}\frac{P_N}{Q_N}\right\vert&\gg \frac{1}{{(yQ_N)}^{\frac{5}{4}}}. \label{General inequality}\end{align} 
Therefore, $$\frac{1}{(y_cQ_N)^{\frac{5}{4}}}=O(\frac{1}{Q_N^n})+O(\frac{1}{y_c^n}),$$
which gives $$\min(y_c,Q_N)^{n-\frac{5}{4}}\ll \max(y_c,Q_N)^{\frac{5}{4}},$$
    or equivalently 
\begin{align}  
\min(y_c,Q_N)^{\frac{4n}{5}-1}&\ll\max(y_c,Q_N).\label{Main general inequality}
\end{align}
We now consider individually each possibility for the $\min(y_c,Q_N)$: 
\begin{itemize}
    \item If $\min(y_c,Q_N)=y_c$, then by Eq. \ref{Main general inequality}, $y_c\ll Q_N^{\frac{5}{4n-5}}\ll Q_N^{\frac{5}{7}}.$ 
    Hence, we obtain
    $$
        \frac{1}{y_c^n}\gg\left\vert\frac{x_c}{y_c}-\frac{\rho_i}{\rho_1}\frac{P_N}{Q_N}\right\vert\geq \left\vert\frac{x_c}{y_c}-\rho_i\right\vert-\left\vert\frac{\rho_i}{\rho_1}(\frac{P_N}{Q_N}-\rho_1)\right\vert \gg \notag   \left\vert\frac{x_c}{y_c}-\rho_i\right\vert-\frac{1}{Q_N^2}, $$ and thus $$\left\vert\frac{x_c}{y_c}-\rho_i\right\vert\ll \frac{1}{y_c^{2+\frac{4}{5}}}.$$ 
\item If $\min(y_c,Q_N)=Q_N$, we have $Q_N\ll y_c^{\frac{5}{4n-5}}$ and $y_c\gg Q_N^{\frac{7}{5}}\gg Q_N^{\frac{5}{4}}$, since $n\geq3.$ 
\end{itemize}
\end{proof}
As we will see, the usefullness of Proposition \ref{Main Theorem measure} lies in the fact that if $\rho_i$ have ``good diophantine properties", then the possible conclusions $(2),(3),(4)$ of the proposition, occupy only a $o_N(1)\%$ proportion of $I_N,$ providing a way to construct curves of almost full spectral measure near an extremal lattice. We begin with conclusion $(4)$:

\begin{lemma} \label{D has measure 0}
Let $\rho_1\in\R$ and $\frac{P_N}{Q_N}$ its convergents. Let $D_{\rho_1}$ denote the set of all $\rho\in\R$, such that for some $M>0$, there exist infinitely many $N\in\mathbb{N}$ and integers $x_c,y_c$ satisfying \begin{itemize}
     \item $y_c<MQ_N^{\frac{5}{4}}$,
     \item $$\left\vert\frac{x_c}{y_c}-\frac{\rho}{\rho_1}\frac{P_N}{Q_N}\right\vert<\frac{M}{(y_cQ_N)^{\frac{5}{4}}}.$$
 \end{itemize}   Then $$\lambda(D_{\rho_1})=0.$$   
\end{lemma}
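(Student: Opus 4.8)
The plan is to fix $\rho_1$ and show that $D_{\rho_1}$ is a $\limsup$ of a sparse family of intervals, so that a Borel--Cantelli argument (or a direct measure summation) forces $\lambda(D_{\rho_1})=0$. First I would record that the relevant constraint couples $\rho$ to a rational approximation of the fixed number $\frac{\rho}{\rho_1}\cdot\frac{P_N}{Q_N}$; since $\rho_1$ and $\frac{P_N}{Q_N}$ are fixed, the condition $\left\vert\frac{x_c}{y_c}-\frac{\rho}{\rho_1}\frac{P_N}{Q_N}\right\vert<\frac{M}{(y_cQ_N)^{5/4}}$ translates, after multiplying through by $\frac{\rho_1 Q_N}{P_N}$, into a condition of the form $\left\vert\rho-\frac{x_c}{y_c}\cdot\frac{\rho_1 Q_N}{P_N}\right\vert<\frac{M\rho_1}{P_N\, y_c^{1/4} Q_N^{1/4}\cdot y_c}$, i.e.\ $\rho$ lies within distance roughly $\frac{M'}{y_c^{5/4}Q_N^{5/4}}$ of the rational-ish point $\frac{x_c\rho_1 Q_N}{y_c P_N}$ (whose denominator, when reduced, divides $y_c P_N$ and in particular is $\lesssim y_c Q_N$, using $P_N\asymp \rho_1 Q_N$).

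Next I would fix $M$ (take a countable union over $M\in\mathbb{N}$ at the end) and, for each $N$, cover the set of admissible $\rho$ by the union over all $y_c<MQ_N^{5/4}$ and all relevant $x_c$ of the intervals just described. For a given $y_c$ the number of integers $x_c$ for which the centre $\frac{x_c\rho_1 Q_N}{y_c P_N}$ lands in a fixed bounded window (say a neighbourhood of $\rho_1$, which is where $D_{\rho_1}$ lives after truncation) is $O\!\left(\frac{y_c P_N}{\rho_1 Q_N}\right)=O(y_c)$. Hence the total measure of the cover at level $N$ is
\[
\ll \sum_{y_c\le MQ_N^{5/4}} y_c\cdot \frac{M'}{(y_c Q_N)^{5/4}}
= \frac{M'}{Q_N^{5/4}}\sum_{y_c\le MQ_N^{5/4}} y_c^{-1/4}
\ll \frac{M'}{Q_N^{5/4}}\cdot \bigl(Q_N^{5/4}\bigr)^{3/4}
= M'\,Q_N^{-5/16}.
\]
Since $Q_N\ge F_N$ grows at least geometrically (golden-ratio rate), $\sum_N Q_N^{-5/16}<\infty$, so the Borel--Cantelli lemma gives that the set of $\rho$ lying in infinitely many of these level-$N$ covers — which contains $D_{\rho_1}\cap(\text{bounded window})$ for this $M$ — has measure zero. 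Taking the union over the bounded windows (a countable exhaustion of $\R$) and over $M\in\mathbb{N}$ yields $\lambda(D_{\rho_1})=0$.

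The main obstacle is the bookkeeping in the second step: one must be careful that the "denominator" of the centre $\frac{x_c\rho_1 Q_N}{y_c P_N}$ is genuinely of size $\lesssim y_c Q_N$ rather than something larger (it is, because $P_N\asymp\rho_1 Q_N$ and cancellation only helps), and that the count of valid $x_c$ per $y_c$ really is $O(y_c)$ on the relevant bounded window rather than growing with $Q_N$ — this uses crucially that $\frac{P_N}{Q_N}$ is bounded away from $0$ and $\infty$. Once the exponent $Q_N^{-5/16}$ (any negative power would do) is extracted, summability is immediate from the geometric growth of $Q_N$, and no subtlety remains; the uniformity in $M$ and the localization to a bounded window are handled by the two countable unions at the end. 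An alternative, slightly cleaner route avoiding Borel--Cantelli is to note that $D_{\rho_1}=\bigcup_M \bigcap_{N_0}\bigcup_{N\ge N_0}(\text{level-}N\text{ cover})$ and bound the measure of the inner tail union directly by $\sum_{N\ge N_0}Q_N^{-5/16}\to 0$.
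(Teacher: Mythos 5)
Your proposal is correct and follows essentially the same route as the paper: the same covering of $D_{\rho_1}$ by intervals $I(N,x_c,y_c)$ localized to a bounded window, the same $O(y_c)$ count of admissible $x_c$ per denominator, the identical measure bound $\ll Q_N^{-5/16}$ at level $N$, and summability via the geometric growth $Q_N\ge F_N$. The ``alternative cleaner route'' you mention at the end is in fact exactly the form the paper's argument takes.
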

\begin{proof}
We have  
$$D_{\rho_1}=\bigcup\limits_{M\in\mathbb{N}}\bigcap\limits_{N_0\in\mathbb{N}}\bigcup\limits_{N>N_0}\bigcup\limits_{\substack{(X,Y)\in\Z\times\mathbb{N},\\Y<MQ_N^{\frac{5}{4}}}}I(N,X,Y),$$where $$I(N,X,Y)=\left(\rho_1\frac{Q_N}{P_N}\frac{X}{Y}-{\rho_1}\frac{Q_N}{P_N}\frac{M}{(YQ_N)^{\frac{5}{4}}},\rho_1\frac{Q_N}{P_N}\frac{X}{Y}+{\rho_1}\frac{Q_N}{P_N}\frac{M}{(YQ_N)^{\frac{5}{4}}}\right).$$
 We therefore have that for any interval $[a,b]$, there exist some constants $A,B,$ such that $$D_{\rho_1}\cap[a,b]=\bigcup\limits_{M\in\mathbb{N}}\bigcap\limits_{N_0\in\mathbb{N}}\bigcup\limits_{N>N_0}\bigcup\limits_{\substack{AY<X,\\X<BY}}\bigcup\limits_{\substack{(X,Y)\in\Z\times\mathbb{N},\\Y<MQ_N^{\frac{5}{4}}}}I(N,X,Y).$$To show that this set is of measure zero it suffices by classical measure theory to show that for any $M>0$, 
$$\sum\limits_{N\in \mathbb{N}} \lambda\left(\bigcup\limits_{\substack{AY<X,\\X<BY}}\bigcup\limits_{\substack{Y\in\mathbb{N},\\Y<MQ_N^{\frac{5}{4}}}}I(N,X,Y)\right)<\infty.$$By the trivial bounds, we have
\begin{align}
    &\sum\limits_{N\in \mathbb{N}} \lambda\left(\bigcup\limits_{\substack{AY<X,\\X<BY}}\bigcup\limits_{\substack{Y\in\mathbb{N} \\ Y<MQ_N^{\frac{5}{4}}}}I(N,X,Y)\right)\ll\sum\limits_{N\in \mathbb{N}} \sum\limits_{Y<MQ_N^{\frac{5}{4}}}\frac{1}{Y^{\frac{1}{4}}Q_N(\rho)^{\frac{5}{4}}}\ll \notag \\ &\sum\limits_{N\in \mathbb{N}}\frac{Q_N^{\frac{3}{4}\frac{5}{4}}}{Q_N^{\frac{5}{4}}}\ll \sum\limits_{N\in\mathbb{N}}\phi^{-\frac{5}{16} N}, \notag
\end{align} The result follows.
\end{proof}
\begin{thm}  
\label{Density theorem}
    Let $P$ a binary form of degree $n\geq3$ of non-zero discriminant. Then, $\Spec(P)$ is a set of full measure inside $[0,M_P].$

\end{thm}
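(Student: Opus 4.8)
The plan is to combine the flexibility of near‑extremal lattices from Theorem~\ref{Flexibility of the extremal lattice} with the diagonal‑perturbation mechanism of Proposition~\ref{Main Theorem measure}, upgrading the density argument behind the weak form of Theorem~\ref{D>0} to a full‑measure statement. The first step is a reduction: precomposing $P$ with a fixed $T\in\SL_2(\R)$ merely reparametrizes the family of unimodular lattices, so $\Spec(P\circ T)=\Spec(P)$, and it therefore suffices to produce, for every $\epsilon>0$, a form $P'$ in the $\SL_2(\R)$‑orbit of $P$ with $m(P')>M_P-\epsilon$ whose real roots $\rho_i$ in addition lie in $E_2$ and avoid the exceptional null sets described below. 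Such a $P'$ exists: by Theorem~\ref{Flexibility of the extremal lattice} (applied with conull choices $X_i$) together with Lemma~\ref{B relevance}, a positive‑measure set of $T$ near the identity yields an $O(\epsilon)$‑almost extremal $P'=P\circ T$ all of whose real roots lie in $E_2$; intersecting this set with the requirements that the roots avoid the set $D_{\rho_1}$ of Lemma~\ref{D has measure 0} and the analogous null sets controlling conclusions $(2)$ and $(3)$ of Proposition~\ref{Main Theorem measure} costs nothing, because each such requirement is a null condition on $(\rho_1,\dots,\rho_k)$ and the maps $T\mapsto T(\rho_i)$ and $T\mapsto\bigl(T(\rho_1),T(\rho_i)\bigr)$ are submersions near the identity, hence pull back null sets to null sets.

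Next I would run Proposition~\ref{Main Theorem measure} on $P'$. Fix $N$ large and odd, let $I_N=[\theta_N,\tfrac{P_N}{\rho_1Q_N}]$ (of length $\asymp Q_N^{-n}$), and set $f_N(\theta)=|P'\circ\Delta_\theta(P_N,Q_N)|$. From the proof of Proposition~\ref{Main Theorem measure}, $f_N$ decreases monotonically from $m(P')$ to $0$ across $I_N$, and on this tiny interval $f_N$ equals a smooth nonvanishing factor of size $\asymp Q_N^{n-1}$ times the linear form $\theta\mapsto P_N-\theta\rho_1Q_N$, so $\|f_N'\|_\infty\asymp Q_N^n$ uniformly on $I_N$ (bounded distortion). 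The point is that, because the roots of $P'$ lie in $E_2$ and avoid the exceptional null sets, the ``bad'' conclusions $(2),(3),(4)$ of Proposition~\ref{Main Theorem measure} hold only on a subset $B_N\subset I_N$ with $\lambda(B_N)=o_N(1)\lambda(I_N)$: conclusions $(3)$ and $(4)$ force an unusually good rational approximation of some $\rho_i$ (resp.\ of $\tfrac{\rho_i}{\rho_1}\tfrac{P_N}{Q_N}$, excluded by Lemma~\ref{D has measure 0}), and conclusion $(2)$ forces $\theta\rho_i$ to be abnormally close to a rational of denominator $\gtrsim Q_N^{5/4}$, whose $\theta$‑measure is controlled by a Borel--Cantelli count. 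Hence on $G_N:=I_N\setminus B_N$ conclusion $(1)$ holds, so $m(P'\circ\Delta_\theta)=f_N(\theta)$ and thus $f_N(\theta)\in\Spec(P')=\Spec(P)$ for $\theta\in G_N$.

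Finally I would assemble the pieces. As $f_N\colon I_N\to[0,m(P')]$ is a monotone bijection, bounded distortion gives
\[
\lambda\bigl([0,m(P')]\setminus f_N(G_N)\bigr)=\lambda\bigl(f_N(I_N\setminus G_N)\bigr)\le\|f_N'\|_\infty\,\lambda(I_N\setminus G_N)=o_N(1).
\]
Writing $Z_N=[0,m(P')]\setminus f_N(G_N)\subset[0,M_P]$, we have $\lambda(Z_N)\to0$, hence $\lambda\bigl(\bigcap_N Z_N\bigr)\le\inf_N\lambda(Z_N)=0$, and therefore $\Spec(P)\supseteq\bigcup_N f_N(G_N)=[0,m(P')]\setminus\bigcap_N Z_N$ contains $[0,m(P')]$ up to a null set. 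Letting $\epsilon\to0$, so that $m(P')\to M_P$, and using $M_P\in\Spec(P)$ (extremal lattices exist, Proposition~\ref{existence of extremal lattice}), we obtain $\lambda\bigl([0,M_P]\setminus\Spec(P)\bigr)=0$, which is the assertion.

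I expect the main obstacle to be the estimate $\lambda(B_N)=o_N(1)\lambda(I_N)$, and in particular the part coming from conclusion $(2)$: one must show, for a Diophantine‑generic root $\rho_i$, that the interval $\rho_iI_N$ stays far from rationals of denominator $\gtrsim Q_N^{5/4}$ for all but a negligible range of scales, which is precisely where the hypotheses $\rho_i\in E_2$ (and the further null exclusions, possibly involving the ratios $\rho_i/\rho_1$) are genuinely used rather than just membership in $E_2$. A secondary, routine point is verifying that all these null conditions can indeed be imposed simultaneously with the conclusion of Theorem~\ref{Flexibility of the extremal lattice}, which is the submersion remark in the first paragraph.
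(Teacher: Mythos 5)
Your proposal is correct and follows essentially the same route as the paper: conjugate by a near-identity $T_\epsilon$ supplied by Theorem~\ref{Flexibility of the extremal lattice} so that the roots lie in $E_2$ and avoid the null set of Lemma~\ref{D has measure 0}, then show that on all but an $o_N(1)$ proportion of $I_N$ only conclusion $(1)$ of Proposition~\ref{Main Theorem measure} survives, and push the good set forward under $\theta\mapsto|P'\circ\Delta_\theta(P_N,Q_N)|$. Your explicit bounded-distortion phrasing of the last step is just a more formal rendering of the paper's ``$\lambda(\Spec(P))\geq m(P\circ T_\epsilon)-o_N(1)$'' conclusion.
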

\begin{proof}
 If $\Z^2$ is not an extremal lattice for $P$, we can simply conjugate by some $T\in\SL_2(\R)$ and arrange that. Hence, assume that $m(P)=M_P$. Let $\epsilon>0$. By Theorem \ref{Flexibility of the extremal lattice}, Lemma \ref{D has measure 0} and the fact that $E_2$ has full measure, we can find $T_{\epsilon}\in\SL_2(\R),$ such that \begin{itemize}
\item $\vert\vert T_{\epsilon}-id_2\vert\vert<\epsilon,$
    \item $m(P\circ T_{\epsilon})>m(P)-\epsilon$, 
    \item $\rho_i\notin D_{T_{\epsilon}(\rho_1)}$, for $i\in\{1,2,...,k\}$,
    \item The roots of $P\circ T_{\epsilon}$ lie in $E_2$.
\end{itemize} Denote by $I_{N,\epsilon}$ the interval defined in Proposition \ref{Main Theorem measure}, corresponding to the form $P\circ T_{\epsilon}$ and the root $T(\rho_1)$. It follows that there exists some $M_{\epsilon}>0,$  such that for $N$ large enough and $\theta\in I_{N,\epsilon}$, one of the following holds: \begin{enumerate}
    \item $P\circ T_{\epsilon}\circ\Delta_{\theta}$ is minimized by $(P_N(T_{\epsilon}(\rho_1)),Q_N(T_{\epsilon}(\rho_1))),$ 
    \item $P\circ T_{\epsilon}\circ\Delta_{\theta}$ is minimized by $(x_c,y_c)$, for some $(x_c,y_c)\in\Z\times\mathbb{N}$ with $y_c<M_{\epsilon}$,
    \item there exist $(x_c,y_c)\in\Z\times\mathbb{N}$ with $y_c>\frac{1}{M_{\epsilon}}Q_N(T_{\epsilon}(\rho_1))^{\frac{5}{4}}$ and $$\left\vert\frac{x_c}{y_c}-\theta{\rho_i}\right\vert<\frac{M_{\epsilon}}{y_c^{n}},$$ for some $i\in\{1,2,...,k\}.$
    \end{enumerate}
    Here, we used that $\rho_i\in E_2$ along with the fact that the constant $M$ behaves continuously on the roots $\rho_i$. Denote $S_i$ the set of those $\theta\in I_{N,\epsilon}$ for $i=1,2,3$ in each of the above cases respectively. We now prove that $$\lambda(S_1)=(1+o_N(1))\lambda(I_{N,\epsilon}).$$
    It clearly suffices to show that $$\lambda(S_2)=o_N(1)Q_N^{-n}(T_{\epsilon}(\rho_1))\text{ and }\lambda(S_3)=o_N(1)Q_N^{-n}(T_{\epsilon}(\rho_1)).$$
    Notice that for some fixed $(x_0,y_0)\in\Z\times\mathbb{N}$, the set $$\{\theta\in I_{N,\epsilon}: \,P\circ T_{\epsilon}\circ\Delta_{\theta}\text{ is minimized by $(x_0,y_0)$}\}$$ has measure $o_N(1)\frac{1}{Q_N^n(T_{\epsilon}(\rho_1))}$ and therefore, $\lambda(S_2)=o_N(1)\frac{1}{Q_N^n(T_{\epsilon}(\rho_1))}$. As for $S_3,$ fix some $i\in\{1,2,...,k\}$ and let $S_{3,i}\subset I_{N,\epsilon}$ denote the set of those $\theta$, such that there exist $(x_c,y_c)\in\Z\times\mathbb{N}$ with $y_c>\frac{1}{M_{\epsilon}}Q_N(T_{\epsilon}(\rho_1))^{\frac{5}{4}}$ and $$\left\vert\frac{x_c}{y_c}-\theta{\rho_i}\right\vert<\frac{M_{\epsilon}}{y_c^{n}}.$$ We have $$\lambda(S_{3,i})\leq\sum\limits_{\substack{\frac{X}{Y}\in\mathbb{Q}\cap I_{N,\epsilon}, \\  Y > \frac{1}{M_{\epsilon}}Q_N(T_{\epsilon}(\rho_1))^{\frac{5}{4}}}}\frac{1}{Y^n}.$$
For some $Y\in \mathbb{N},$ denote by $N(Y)$ the cardinality of the set $$\left\{X\in\Z:\, \frac{X}{Y}\in I_{N,\epsilon}\right\}.$$ For $N(Y)$, we have the trivial bound $N(Y)\ll \frac{Y}{Q_N^n(T_{\epsilon}(\rho_1))}.$ We thus get the bound 
$$\lambda(S_{3,i})\ll\sum\limits_{Y > \frac{1}{M_{\epsilon}}Q_N(T_{\epsilon}(\rho_1))^{\frac{5}{4}}} \frac{N(Y)}{Y^n}\ll\frac{1}{Q_N(T_{\epsilon}(\rho_1))^n}\sum\limits_{Y > \frac{1}{M_{\epsilon}}Q_N(T_{\epsilon}(\rho_1))^{\frac{5}{4}}} \frac{1}{Y^{n-1}}\ll \frac{1}{Q_N(T_{\epsilon}(\rho_1))^{n+\frac{5(n-2)}{4}}}.$$

\noindent However, for $\theta\in S_1,$ $$\left\vert P\circ T_{\epsilon}\circ\Delta_{\theta}(P_N,Q_N)\right\vert\in\Spec (P),$$ and hence $$\lambda(\Spec(P))\geq m(P\circ T_{\epsilon})-o_N(1).$$ By taking $N\rightarrow\infty$, we have proved that $$\lambda(\Spec(P))\geq m(P\circ T_{\epsilon})\geq m(P)-\epsilon.$$ The theorem follows by taking $\epsilon\rightarrow0$.
\end{proof}

\section{Fixed point perturbations and intervals in the spectrum} \label{Fixed point perturbations and intervals in the spectrum}
The machinery we have built so far, coupled with a generalization of the classical Steinhaus theorem (\cite{Steinhaus},\cite{erdos1955partitions}), is already enough to prove Theorem \ref{D>0}.
\begin{lemma}[General Steinhaus Theorem]\label{General Steinhaus}
    Fix $\epsilon,m,M>0$. There exists some $\delta$, such that for every $a_2>a_1>0,b_2>b_1>0,c_2>c_1>0$, every $C^{\infty}$ function $f:[a_1,a_2]\times [b_1,b_2]\times [c_1,c_2]\rightarrow \R$ satisfying  $$m<\vert\pdv{u}{a}(a_0,b_0,c_0)\vert,\vert\pdv{f}{b}(a_0,b_0,c_0)\vert,\vert\pdv{f}{c}(a_0,b_0,c_0)\vert<M,$$ for all $(a_0,b_0,c_0)\in [a_1,a_2]\times [b_1,b_2]\times [c_1,c_2]$, has the following property: For any $X_1\times X_2\times X_3\subset [a_1,a_2]\times [b_1,b_2]\times [c_1,c_2]$ measurable set with \begin{align*}
    &\lambda(X_1)\geq(1-\delta)\lambda([a_1,a_2]),\\ & \lambda(X_2)\geq(1-\delta)\lambda([b_1,b_2]),\\ & \lambda(X_3)\geq(1-\delta)\lambda([c_1,c_2]),\end{align*}we have $$f\left(\left[a_1(1+\epsilon),a_2(1-\epsilon)\right]\times \left[b_1(1+\epsilon),b_2(1-\epsilon)\right]\times \left[c_1(1+\epsilon),c_2(1-\epsilon)\right]\right)\subset f(X_1\times X_2\times X_3).$$
\end{lemma}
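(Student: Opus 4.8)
The plan is, for each value $y$ attained by $f$ on the shrunk box, to produce a point of $X_1\times X_2\times X_3$ on the fibre $f^{-1}(y)$; the geometric input is that $f$ is a monotone submersion with uniformly controlled gradient, so its fibres are bi-Lipschitz graphs over the coordinate faces, and the measure-theoretic input is a union bound inside one such fibre. First I would make the standard reductions. Each of $\partial f/\partial a$, $\partial f/\partial b$, $\partial f/\partial c$ is continuous and nowhere zero on the (convex) box, hence of constant sign; replacing coordinates by their negatives if necessary (which does not affect the statement once the target point has been located in the shrunk box), assume all three are positive, so that $f$ is strictly increasing in each variable and, for fixed values of any two coordinates, the third is a $C^\infty$ diffeomorphism onto its image with derivative of modulus in $[m,M]$. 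By the implicit function theorem $f^{-1}(y)$ is locally a graph $a=g(b,c)$ with $|\partial_b g|,|\partial_c g|\in[m/M,M/m]$ (and likewise a graph over the $(a,c)$- and $(a,b)$-faces). Fix $y=f(a_0,b_0,c_0)$ with $(a_0,b_0,c_0)$ in the shrunk box; it suffices to exhibit a point of the surface $\Sigma_y:=f^{-1}(y)\cap\big([a_1,a_2]\times[b_1,b_2]\times[c_1,c_2]\big)$ lying in $X_1\times X_2\times X_3$.

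Next I would bound the fibre from below. Let $R$ be the projection of $\Sigma_y$ to the $(b,c)$-face, so that on $R$ the surface is the graph $a=g(b,c)$ with values in $[a_1,a_2]$; explicitly $R=\{(b,c):f(a_1,b,c)\le y\le f(a_2,b,c)\}$. The crucial claim is that $R$ occupies a definite proportion of the face: since $(a_0,b_0,c_0)$ lies in the shrunk box, $\partial_a f\ge m$ forces $y$ to exceed $f(a_1,b_0,c_0)$ and to fall short of $f(a_2,b_0,c_0)$ by a positive amount comparable to $\epsilon$ times the size of the box, while $|\nabla_{b,c}f(a_i,\cdot)|\le\sqrt2\,M$; hence $\{f(a_1,b,c)\le y\le f(a_2,b,c)\}$ contains a Euclidean disc about $(b_0,c_0)$ whose radius is a fixed multiple of the size of the box depending only on $\epsilon,m,M$, and since $(b_0,c_0)$ is $\epsilon$-interior to the face this disc lies in the face. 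Therefore $\lambda_2(R)\ge c(\epsilon,m,M)\,\lambda_2(\text{face})$ for a positive constant $c$.

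Then I would run a union bound inside $R$. Consider the bad sets $B_1=\{(b,c)\in R:g(b,c)\notin X_1\}$, $B_2=\{(b,c)\in R:b\notin X_2\}$ and $B_3=\{(b,c)\in R:c\notin X_3\}$. For $B_1$, fixing $c$ and using that $b\mapsto g(b,c)$ is injective with slope of modulus at least $m/M$ gives $\lambda_2(B_1)\le\tfrac{M}{m}\,\lambda\!\big([a_1,a_2]\setminus X_1\big)\,(c_2-c_1)\le\tfrac{M}{m}\,\delta\,(a_2-a_1)(c_2-c_1)$, while Fubini gives $\lambda_2(B_2)\le\delta\,(b_2-b_1)(c_2-c_1)$ and $\lambda_2(B_3)\le\delta\,(b_2-b_1)(c_2-c_1)$ directly. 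Comparing these with the lower bound $\lambda_2(R)\ge c(\epsilon,m,M)\,\lambda_2(\text{face})$ and choosing $\delta$ small in terms of $\epsilon,m,M$ makes $\lambda_2(B_1\cup B_2\cup B_3)<\lambda_2(R)$, so there is $(b,c)\in R\setminus(B_1\cup B_2\cup B_3)$; then $(g(b,c),b,c)\in X_1\times X_2\times X_3$ and $f(g(b,c),b,c)=y$, as desired. Taking the union over all admissible $y$ yields $f$ of the shrunk box inside $f(X_1\times X_2\times X_3)$.

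The step I expect to be the main obstacle is the lower bound $\lambda_2(R)\ge c(\epsilon,m,M)\,\lambda_2(\text{face})$, i.e., controlling uniformly how large the projection of the fibre is; this is precisely where the $\epsilon$-shrinking is used, since it keeps $y$ a definite distance from the extreme values $f$ takes on the box in each coordinate direction, and here one has to be careful about the shape of the box so that the disc produced around $(b_0,c_0)$ genuinely sits inside the face and the ensuing comparison with $\lambda_2(B_1\cup B_2\cup B_3)$ survives (if the face is very eccentric one projects $\Sigma_y$ onto whichever coordinate face is most favourable). Everything else — the monotonicity, the implicit-function estimates and the final union bound — is routine. Note that the hypothesis that $f$ depends on more than one variable is essential: in one variable the statement is false, since a set of full relative measure can omit any prescribed point.
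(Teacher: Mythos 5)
Your overall strategy --- realize the fibre $f^{-1}(y)$ as a bi-Lipschitz graph over a coordinate face via the implicit function theorem and then run a union bound over the three bad sets --- is in the same spirit as the paper's proof, which works with a curve inside the fibre parametrized by $a$ and an inclusion--exclusion estimate. However, the step you yourself flag as the main obstacle is a genuine gap, and it cannot be closed. The claim $\lambda_2(R)\geq c(\epsilon,m,M)\,\lambda_2(\mathrm{face})$ is false: the multiplicative shrinking only guarantees that $y$ stays at distance $\geq m\epsilon a_1$ from $f(a_1,b_0,c_0)$, and $\epsilon a_1$ admits no lower bound in terms of the side lengths $a_2-a_1$, $b_2-b_1$, $c_2-c_1$, which is the scale at which the bad sets live. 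Concretely, take $f(a,b,c)=a+b+c$ on $[1,K]^3$ and $y=f(1+\epsilon,1+\epsilon,1+\epsilon)=3+3\epsilon$: the fibre is a triangle of area $O(\epsilon^2)$ near the corner $(1,1,1)$, each of its three coordinate projections also has area $O(\epsilon^2)$, while every face has area $(K-1)^2$, so passing to a ``most favourable face'' does not help. Worse, with $X_1=[1+\delta(K-1),K]$ and $X_2=X_3=[1,K]$ one has $f(X_1\times X_2\times X_3)=[3+\delta(K-1),3K]$, which misses $3+3\epsilon$ as soon as $K>1+3\epsilon/\delta$; in your language $B_1$ is then all of $R$, and no union bound can succeed.

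This example shows that the statement as written (with $\delta$ depending only on $\epsilon,m,M$, uniformly over all boxes) is not provable, so the defect lies partly in the formulation and not only in your estimates. A repairable version shrinks each side by $\epsilon$ times its length and assumes the three side lengths are mutually comparable; under those hypotheses your argument does go through, provided you also replace the crude Fubini bounds $\lambda_2(B_2),\lambda_2(B_3)\leq\delta\,\lambda_2(\mathrm{face})$ by the sharper bounds $\lambda_2(B_2)\leq\frac{M}{m}\delta(b_2-b_1)(a_2-a_1)$ and $\lambda_2(B_3)\leq\frac{M}{m}\delta(c_2-c_1)(a_2-a_1)$, which follow from the fact that every slice of $R$ transverse to the graph direction has length at most $\frac{M}{m}(a_2-a_1)$; without this refinement the comparison with $\lambda_2(R)$ fails even for boxes with comparable sides once they are far from cubes in the multiplicative sense. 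For what it is worth, the paper's own proof stops at the implication ``$\lambda_2(g_h(I_h))\geq M\delta+\delta(2-\delta)(\mathbf{b}+\mathbf{c})$ implies $g_h(X_1)\cap(X_2\times X_3)\neq\emptyset$'' and never verifies the hypothesis, which fails in exactly the regime above; so your diagnosis of where the difficulty sits is accurate, but the proposed resolution does not resolve it.
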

\begin{proof}
Fix some $h\in f([a_1,a_2]\times [b_1,b_2]\times [c_1,c_2]),$ and assume that for some $a,b,c\in[a_1,a_2]\times [b_1,b_2]\times [c_1,c_2]$, we have $h=f(a,b,c).$ Then by the implicit function theorem, we can find some $C^{\infty}$ function $$g_h: I_h\rightarrow[b_1,b_2]\times [c_1,c_2],$$ where $$I_h=[a_1,a_2]\cap B(a,\min(\frac{m}{M}\min(b-b_1,b_2-b),\frac{m}{M}\min(c-c_1,c_2-c))),$$ such that $f(a,g_h(a))=h.$ Write $\textbf{b}$ for $b_2-b_1$ and $\textbf{c}$ for $c_2-c_1$. We have,
\begin{align*}
    &\lambda_2(g_h(X_1\cap I_h)\cap X_2\times X_3)=\lambda_2(g_h(X_1\cap I_h))+\lambda_2(X_2\times X_3)-\lambda_2(g_h(X_1\cap I_h)\cup X_2\times X_3)\geq \\&
    \lambda_2(g_h(X_1\cap I_h))+(1-\delta)^2(\textbf{b}+\textbf{c})-(\textbf{b}+\textbf{c})=\lambda_2(g_h(I_h))-\lambda_2(g_h(X_1^c\cap I_h))-\delta(2-\delta)(\textbf{b}+\textbf{c})\geq\\ & \lambda_2(g_h(I_h))-M\lambda(X_1^c\cap I_h)-\delta(2-\delta)(\textbf{b}+\textbf{c})\geq
    \lambda_2(g_h(I_h))-M\delta-\delta(2-\delta)(\textbf{b}+\textbf{c}).
\end{align*} We have shown that   $$\lambda_2(g_h(I_h))\geq M\delta+\delta(2-\delta)(\textbf{b}+\textbf{c})\implies g_h(X_1)\cap \left(X_2\times X_3\right)\neq\emptyset,$$ which concludes our proof.
\end{proof}
We are now in position to prove Theorem \ref{D>0}.
\begin{proof}[Proof of Theorem \ref{D>0}] Let $\rho>\chi>\psi$ denote the real roots of $x^3+x^2-2x-1=0$, with $P(x,y)=x^3+x^2y-2xy^2-y^3$ being the extremal form for the lattice $\Z^2$, as proved by Mordell. Consider the following family of binary cubic forms $$P_{\theta_1,\theta_2,\theta_3}(x,y)=c_{\theta_1,\theta_2,\theta_3}(x-\theta_1\rho y)(x-\theta_2\chi y)(x-\theta_3\psi y),$$ where $c_{\theta_1,\theta_2,\theta_3}$ is such that the discriminant of the form is $1$. Denote by $\frac{P_N(\rho)}{Q_N(\rho)}$ the convergents of $\rho$ and define $\theta_N(\rho)$ to be the first point on the left of $\frac{P_N(\rho)}{\rho Q_N(\rho)}$ such that $$\left\vert P_{\theta_N(\rho),1,1}(P_N(\rho),Q_N(\rho))\right\vert=\sqrt[4]{\frac{1}{49}}.$$ We define $I_N^{\rho}=[\theta_N(\rho),\frac{P_N(\rho)}{\rho Q_N(\rho)}]$ and analogously we can define the sets $I_N^{\chi}$, $I_N^{\psi}.$ Similarly to the proof of Theorem \ref{Density theorem}, we can find subsets $\overline{I_N^{\rho}},\overline{I_N^{\chi}},\overline{I_N^{\psi}}$ of proportion $100-o_N(1)\%$ such that for all $(\theta_1,\theta_2,\theta_3)\in \overline{I_N^{\rho}}\times \overline{I_N^{\chi}}\times \overline{I_N^{\psi}}$, we have that $m(P_{\theta_1,\theta_2,\theta_3})$ is minimized by \begin{itemize}
    \item $(P_N(\rho),Q_N(\rho))$ or
    \item $(P_N(\chi),Q_N(\chi))$ or
    \item $(P_N(\psi),Q_N(\psi))$.
\end{itemize}
Hence, by Lemma \ref{General Steinhaus}, we get by taking $N\rightarrow\infty$, that $\Spec(P)$ contains the set $$\left\{\min_{(P_N(\rho),Q_N(\rho)),(P_N(\chi),Q_N(\chi)),(P_N(\psi),Q_N(\psi))}\vert P_{\theta_1,\theta_2,\theta_3}(x,y)\vert:\,(\theta_1,\theta_2,\theta_3)\in I_N^{\rho}\times I_N^{\chi}\times I_N^{\psi} \right\}^{\mathrm{o}},$$ which implies that $$\left(0,\sqrt[4]{\frac{1}{49}}\right)\in\Spec(P).$$ Since we have already shown that $0$ and $\sqrt[4]{\frac{1}{49}}$ are in the spectrum, we have concluded the proof of the theorem.
\end{proof}
The use of the generalization of Steinhaus' theorem relies on the product structure of the family we constructed. This construction is not possible for $n\geq4$ as we have discussed.

\begin{defn}
 Let $P$ denote some binary form of degree $n\geq3$. As usual, we denote its roots by $\rho_i$, $i\in\{1,2,...,n\}$, with $\rho_i$ being real for $i\in\{1,2,...,k\}.$ For $N\in\mathbb{N}$ let $I_N$ as in Proposition \ref{Main Theorem measure}. For $\theta\in I_N$, we define the curve $$\Sigma_{N,\theta,u}=\left\{T\in\SL_2(\R):\, T(\rho_1)=\rho_1,\,\vert\vert T-id_2\vert\vert<1,\,\,\prod_{i\geq2}(\frac{P_N}{Q_N}-\theta T(\rho_i))=u\right\}.$$ \end{defn}

Since $\SL_2(\R)$ is $3$ dimensional, any $T\in\Sigma_{N,\theta,u}$ is completely determined by the value of $T(\rho_2)$. Fix some $i\in\{3,...,n\}$. By solving the equation $$\prod_{i\geq2}(\frac{P_N}{Q_N}-\theta T(\rho_i))=u$$ in the two variables $T(\rho_2),T(\rho_i)$, we obtain a non trivial polynomial equation $$F_{N,\theta,u,i}(T(\rho_2),T(\rho_i))=0$$ unique up to a constant. Taking $N\rightarrow\infty$ and $u\rightarrow\prod_{i\geq2}(\rho_1-\rho_i),$ $F_{N,\theta,u,i}$ converges to some polynomial $F_i$ in the two variables $T(\rho_2)$ and $T(\rho_i)$, unique up to rescaling, which trivially does not reduce to a one variable polynomial. The non-vanishing of the derivative of $F_i$ with respect to the variable $T(\rho_i)$ implies by the implicit function theorem, that $T(\rho_i)$ is a $C^{\infty}$ function of $T(\rho_2)$ on the curve $\Sigma_{N,\theta,u}$. We show that can find many matrices $T\in\Sigma_{N,\theta,u}$, such that the roots of $P\circ T$ have our desired Diophantine properties:

\begin{prop}\label{Sigma subvariety}
    Let $P$ some binary form with real roots denoted by $\rho_i$, $i\in\{1,2,...,k\}$ and let $\eta>0$. Assume that $$\pdv{F_i}{\rho'_2}\vert_{(\rho_2,\rho_i)}\neq 0$$ for $i\in \{3,...,k\}$. For every $\epsilon>0,$ there exists some $\delta>0$, such that for every $N$ large enough, $\theta\in I_N$ and $u$ satisfying $$\vert v-\prod_{i\geq2}(\rho_i-\rho_1)\vert<\delta,$$ there exists some non-trivial open subset $U\subset \Sigma_{N,\theta,v}$ such that $$\mu\left(U\bigcap_{i\geq2}\Pi_i^{-1}(E^{\eta}(\rho_i)\cap B_{\epsilon}(\rho_i)\cap B(\epsilon,\rho_i))\right)\geq (1-\epsilon)\mu\left(U\right),$$ where here $$\mu(U)=\lambda(\left\{T(\rho_2), \,T\in U\right\}).$$    \end{prop}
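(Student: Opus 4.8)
The plan is to transport the Structural Theorem (Theorem~\ref{Positive Proportion}) from a single real line to the curve $\Sigma_{N,\theta,v}$, parametrized by the single coordinate $T(\rho_2)$, using the implicit-function description of the remaining roots $T(\rho_i)$ as $C^\infty$ functions of $T(\rho_2)$. First I would fix a base point: the identity matrix lies (up to the limit $N\to\infty$, $v\to\prod_{i\ge2}(\rho_i-\rho_1)$) in $\Sigma_{N,\theta,v}$, and its image roots are exactly the $\rho_i$. By the non-vanishing hypothesis $\partial F_i/\partial\rho_2'|_{(\rho_2,\rho_i)}\neq0$ for $i\in\{3,\dots,k\}$, each map $\phi_i:T(\rho_2)\mapsto T(\rho_i)$ is a $C^\infty$ diffeomorphism onto its image near the base point, with derivative bounded above and below on a small interval $J\ni\rho_2$; the same is true for the complex roots $i>k$, though there we only need closeness, not Diophantine control. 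For $\delta$ small enough (chosen after $\epsilon$) these bounds are uniform in $N$ large, $\theta\in I_N$, and $v$ in the prescribed $\delta$-ball, because $F_{N,\theta,v,i}\to F_i$ uniformly on compacta.

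Next, for each real index $i\in\{2,\dots,k\}$ I would pull back the sets $E^\eta(\rho_i)\cap B_\epsilon(\rho_i)\cap B(\rho_i,\epsilon)$ under $\phi_i$ (with $\phi_2=\mathrm{id}$). By Proposition~\ref{algorithm} (equivalently the conclusion of Theorem~\ref{Flexibility of the extremal lattice}), applied with the $k-1$ points $\rho_2,\dots,\rho_k$ in place of $\rho_1,\dots,\rho_k$, the joint set $\bigcap_{i=2}^k \Pi_{\rho_i}^{-1}(B(\rho_i,\epsilon)\cap B_\epsilon(\rho_i)\cap E^\eta(\rho_i))$ has density tending to $1$ inside $\bigcap_{i=2}^k\Pi_{\rho_i}^{-1}(B(\rho_i,\epsilon))$ as $\epsilon\to0$. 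The key point is that on the one-dimensional curve the relevant measure is Lebesgue measure in the coordinate $T(\rho_2)$, and the event ``$T(\rho_i)\in E^\eta(\rho_i)\cap B_\epsilon(\rho_i)$'' corresponds, via the bi-Lipschitz map $\phi_i$, to $T(\rho_2)$ lying in a set of Lebesgue density close to $1$ inside a small interval around $\rho_2$. Intersecting finitely many ($k-1$) such full-density sets and invoking the uniform derivative bounds on $\phi_i$ to convert densities in the $T(\rho_i)$-coordinate back into densities in the $T(\rho_2)$-coordinate, I obtain a subinterval $U'\subset J$ with $\mu(U'\cap\bigcap_{i\ge2}\Pi_i^{-1}(\cdots))\ge(1-\epsilon)\mu(U')$; taking $U$ to be the corresponding open subset of $\Sigma_{N,\theta,v}$ finishes the argument. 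For the complex roots $i>k$, shrinking $\delta$ further guarantees $T(\rho_i)\in B(\rho_i,\epsilon)$ automatically, so they impose no density cost.

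\textbf{Main obstacle.} The delicate step is the uniformity: the density estimates from Proposition~\ref{algorithm} and from the Structural Theorem are stated for intervals around fixed points $\rho_i$, but here the ``target'' points are $\theta T(\rho_i)$ with $\theta\in I_N$ shrinking toward $1/\rho_1\cdot P_N/Q_N$, and the implicit functions $\phi_i=\phi_{i,N,\theta,v}$ themselves vary with $N,\theta,v$. I would handle this by first passing to the limit: since $F_{N,\theta,v,i}\to F_i$ in $C^1$ on a fixed compact neighborhood of $(\rho_2,\rho_i)$ (uniformly over $\theta\in I_N$ because $|I_N|\to0$ and over $v$ in the $\delta$-ball), the derivative bounds $m<|\phi_i'|<M$ hold with constants depending only on $P$ and $\delta$, for all $N\ge N_0(\delta)$. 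One must also check that $B_\epsilon(\rho_i)$ and $E^\eta(\rho_i)$ are genuinely being hit — i.e. that the construction of Theorem~\ref{Flexibility of the extremal lattice} produces points in an \emph{arbitrarily small} neighborhood of each $\rho_i$, which it does since that theorem's output lattices can be taken $\epsilon$-close to the identity. Once these two uniformities are in hand, the measure bookkeeping is routine: $k-1$ applications of ``density $\to1$'', each losing at most $\epsilon/(k-1)$ after the bi-Lipschitz change of variables, combine to the claimed $(1-\epsilon)$ bound.
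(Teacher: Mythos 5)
There is a genuine gap at the heart of your argument. You reduce everything to the claim that, after pulling back through the bi\nobreakdash-Lipschitz maps $\phi_i$, the event $T(\rho_i)\in E^{\eta}(\rho_i)\cap B_{\epsilon}(\rho_i)$ corresponds to $T(\rho_2)$ lying in a set of Lebesgue density close to $1$ in a small interval around $\rho_2$, and you then intersect these $k-1$ full-density sets. But $B_{\epsilon}(\rho)\cap E^{\eta}(\rho)$ does \emph{not} have density tending to $1$ in small intervals around $\rho$; the paper points out explicitly, just before the Structural Theorem, that
$$\underline{\lim}_{\lambda(I)\rightarrow0}\frac{\lambda\left(\{x:m(x)>m(\rho)-\delta\}\cap I\right)}{\lambda\left(I\right)}=0.$$
This is exactly why the Structural Theorem is a dichotomy (either the interval already has density $1-\tau_1$, or one must pass to a subinterval of definite proportion $C\tau_1\epsilon$ with density $1-\tau_2$) rather than a density-one statement. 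Your appeal to Proposition \ref{algorithm} does not repair this: that proposition is a statement about Haar measure on the three-dimensional group $\SL_2(\R)$, and it says nothing about the measure induced on the one-dimensional subvariety $\Sigma_{N,\theta,v}$, which has Haar measure zero. The actual proof must re-run the interval-shrinking algorithm directly on the curve, in the coordinate $T(\rho_2)$: start with an interval $I_0$ around $\rho_2$ supplied by the Structural Theorem, test each image $f_{i,N,\theta,u}(I_0)$ against the Type I / Type II dichotomy, shrink to the guaranteed subinterval when Type II occurs, and track that after at most $k$ shrinkings the accumulated loss is $(C\tau_1\epsilon-\tau)^{-k}(\tau_2+\cdots)$, which is then beaten by choosing $\tau_2\ll\tau_1\ll 1$ appropriately.

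A second, smaller omission: the Structural Theorem only applies to intervals \emph{containing} $\rho_i$, and the image $f_{i,N,\theta,u}(I_0)$ of an interval around $\rho_2$ need not contain $\rho_i$ (the curve $\Sigma_{N,\theta,v}$ does not pass exactly through the identity for finite $N$ and $v\neq\prod_{i\geq2}(\rho_i-\rho_1)$). One has to enlarge $f_{i,N,\theta,u}(I_0)$ to the smallest interval also containing $\rho_i$, and verify that for $v$ within $\delta$ of the limiting value this enlargement costs only a factor $1+\tau$ in measure, so that the density bookkeeping survives. You assert the base point lies on the curve ``up to the limit,'' but the quantitative version of this offset is precisely what must be controlled.
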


    \begin{proof}
     Fix some $\epsilon>0.$ By the non-vanishing of the partial derivatives and the implicit function theorem, we can find for $\epsilon,\delta$ small enough, $C^{\infty}$ functions $f_{i,N,\theta,u}$,  defined locally as: $$T(\rho_2)=\rho'_2,\, T\in\Sigma_{N,\theta,v}\iff T(\rho_i)=f_{i,N,\theta,u}(\rho'_2).$$ Note that this implies that the measure $\mu$ is well defined. For example, $$f_{1,N,\theta,u}\equiv \rho_1\text{ and }f_{2,N,\theta,u}\equiv id.$$ Let $\tau,\tau_1,\tau_2$ some parameters to be chosen later as functions of $\epsilon$. The reader is encouraged to consider $1\gg \tau_1\gg \tau_2\gg \tau.$ By Theorem \ref{Positive Proportion}, we can find some interval $I_0$ centered at $\rho_2$ such that $$\lambda(I_0\cap B_{\epsilon}(\rho_2)\cap E^{\eta}(\rho_2))\geq (1-\tau)\lambda(I_0).$$

    Now, $f_{i,N,\theta,u}(I_0)$ is an interval that might not contain $\rho_i.$ To remedy this problem and keep applying Theorem \ref{Positive Proportion} we define $e_{i,N,\theta,u}(I_0)$ to be the smallest interval containing $f_{i,N,\theta,u}(I_0)$ and $\rho_i.$ For $u$ sufficiently small in terms of $\epsilon$ and $\tau$, we can achieve $$(1+\tau)\lambda(f_{i,N,\theta,u}(I_0))\geq \lambda(e_{i,N,\theta,u}(I_0)).$$We now construct an algorithm, that could be thought of us a ``shifted version" of that constructed in Theorem \ref{algorithm}. We will call an interval $e_{i,N,\theta,u}(I)$ containing $\rho_i$ of Type I or Type II according to whether it satisfies the first or second conclusion of Theorem \ref{Positive Proportion} respectively, with respect to $\epsilon$ and the parameters $\tau_1,\tau_2.$

    \begin{itemize}
     \item Set $L_0=\emptyset.$ If $e_{3,N,\theta,u}(I_0)$ is of Type I, then consider $e_{4,N,\theta,u}(I_0)$. If $e_{4,N,\theta,u}(I_0)$ is of Type I, then consider $e_{5,N,\theta,u}(I_0).$ If all of them up to $e_{k,N,\theta,u}(I_0)$ are Type I, terminate the procedure.
     \item Suppose $i_0$ is the smallest number in $\{3,...,k\}$ with $e_{i_0,N,\theta,u}(I_0)$ of Type II. We can then find, by Proposition \ref{Positive Proportion}, some subinterval $I'_1\subset e_{i_0,N,\theta,u}(I_0)$, such that $$\lambda(I'_1)\geq C\tau_1\epsilon\lambda(e_{i_0,N,\theta,u}(I'_0)) \text{ and } \lambda(I'_1\cap B_{\epsilon}(\rho_{i_0})\cap E^{\eta}(\rho_{i_0}))\geq (1-\tau_2)\lambda(I'_1),$$where $C$ is some absolute constant. Define $I''_1=I'_1\cap f_{i_0,N,\theta,u}(I_0).$ We have $$\lambda(I''_1)\geq  \left(C\tau_1\epsilon-\tau\right)\lambda(f_{i_0,N,\theta,u}(I_0)) \text{ and } \lambda(I''_1\cap B_{\epsilon}(\rho_{i_0})\cap E^{\eta}(\rho_{i_0}))\geq (1-\tau_2-\frac{\tau}{C\tau_1\epsilon-\tau})\lambda(I''_1).$$ For $I_1=f_{i_0,N,\theta,u}^{-1}(I''_1),$ we have: \begin{align*}
         \lambda(I_1\cap B_{\epsilon}(\rho_2)\cap E^{\eta}(\rho_2))&\geq \lambda(I_1)-\lambda(I_0\cap \left(B_{\epsilon}(\rho_2)\cap E^{\eta}(\rho_2)\right)^c)\geq \\&\geq\lambda(I_1)-\tau\lambda(I_0)\geq (1-C'\frac{\tau}{C\tau_1\epsilon-\tau})\lambda(I_1),\end{align*} where $C'$ is again some absolute constant depending only on the size of the derivatives of $f_{i_0,N,\theta,u}$. Define $L_1=L_0\cup\{i_0\}$.
     \item We repeat the first step with $I_1$ instead of $I_0$, skipping the numbers in $L_1$ and so on.
     \item Terminate the algorithm at $\mathbb{T}-$th step if $L_{\mathbb{T}}=\{2...,k\}.$
 \end{itemize}
Notice that after at most $k$ steps we obtain an interval $I_f\subset I_0$, some subset $L\subset\{3,...,k\}$ and an absolute constant, which we denote again by $C$ such that: \begin{enumerate}
     \item For all $i\in L$: $$\lambda(f_{i,N,\theta,u}(I_f)\cap B_{\epsilon}(\rho_i)\cap E^{\eta}(\rho_i))\geq (1-(C\tau_1\epsilon-\tau)^{-k}(\tau_2+\frac{\tau}{C\tau_1\epsilon-\tau}))\lambda(f_{i,N,\theta,u}(I_f)).$$
     \item For all $i\in\{3,...,k\}\setminus L$: $$\lambda(f_{i,N,\theta,u}(I_f))\cap B_{\epsilon}(\rho_i)\cap E^{\eta}(\rho_i))\geq (1-\tau_1-\tau)\lambda(f_{i,N,\theta,u}(I_f)).$$
 \end{enumerate}

 Choosing $\tau,\tau_1,\tau_2$ rapidly decaying functions of $\epsilon$ accordingly and defining $$U=\left\{T\in\Sigma_{N,\theta,v}:\, T(\rho_2)\in I_f\right\},$$
concludes the proof.  

\end{proof}
In Theorem \ref{Density theorem} we used diagonal perturbations of $\epsilon-$almost extremal lattices to generate spectrum of full measure. We now show how to further perturb by lattices on the curve $\Sigma_{N,\theta,v}$ to show that the values $$m(P\circ T_{\epsilon}\circ T_u\circ \Delta_{\theta}),$$ where $T_{\epsilon}\in\text{AEL}(\epsilon),$ $T_u\in\Sigma_{N,\theta,v},$ $\theta\in I_N$ cover the entire spectral interval.
\begin{proof}[Proof of Theorem \ref{Main Theorem}:] Fix $\epsilon>0.$ We can assume that $P$ is extremal with respect to $\Z^2$, since we can always achieve that by conjugating with some lattice. By Theorem \ref{Flexibility of the extremal lattice}, we can find some $T_{\epsilon}\in\SL_2(\R)$, such that: 

\begin{itemize}
    \item $m(P\circ T_{\epsilon})>m(P)-\epsilon,$
    \item $\vert\vert T_{\epsilon}-id_2\vert\vert<\epsilon.$ 
    \item $T_{\epsilon}(\rho_i)\in E_2$ for $i\in\{1,2,...,k\}.$
\end{itemize}
We will abuse notation and denote the roots of $P\circ T_{\epsilon}$ by $\rho_i$. This will hopefully not cause any confusion, since throughout this proof we will be considering the roots $P\circ T_{\epsilon}$ and not those of $P$. We can further assume that $$\pdv{F_i}{\rho'_2}\vert_{(\rho_2,\rho_i)}\neq 0$$ for all $i\in\{1,2,...,k\}$ since $\pdv{F_i}{\rho'_2}$ is not indentically $0$. Let $M$ as in the statement of Proposition \ref{Main Theorem measure} corresponding to the form $P\circ T_{\epsilon}$. Applying Theorem \ref{Sigma subvariety} for some $\eta<\frac{4}{5}$, we can find some $\delta>0$, such that for $N$ large enough, $\theta\in I_N$ and $u$ satisfying $$\vert v-\prod_{i\geq2}(\rho_i-\rho_1)\vert<\delta,$$ there exists some open subset $U\subset \Sigma_{N,\theta,v}$ such that $$\mu\left(U\bigcap_{i\geq2}\Pi_i^{-1}(E^{\eta}(\rho_i)\cap B_{\epsilon}(\rho_i))\right)\geq (1-\epsilon)\mu\left(U\right).$$ Now, the set of $\rho\in B(\rho_i,\epsilon)$, for which we can find $(x_c,y_c)\in\mathbb{Z}\times\mathbb{N}$ satisfying $y_c<2MQ_N^{\frac{5}{4}}$ and $$\left\vert\frac{x_c}{y_c}-\frac{\rho}{\rho_1}\frac{P_N}{Q_N}\right\vert<\frac{2M}{(y_cQ_N)^{\frac{5}{4}}},$$ has measure bounded above by $$\sum\limits_{\substack{\frac{X}{Y}\in\mathbb{Q}\cap B(\rho_i,2\epsilon), \\  Y < 2MQ_N^{\frac{5}{4}}}}\frac{4M}{{(YQ_N)}^{\frac{5}{4}}}\ll \epsilon\sum\limits_{Y<2MQ_N^{\frac{5}{4}}}\frac{1}{Y^{\frac{1}{4}}Q_N^{\frac{5}{4}}}\ll\epsilon\phi^{-N\frac{5}{16}}=o_N(1)\epsilon.$$ Similarly, the set of $\rho\in B(\rho_i,\epsilon)$ for which we can find $(x_c,y_c)\in\mathbb{Z}\times\mathbb{N}$ satisfying $y_c>\frac{1}{2M}Q_N^{\frac{5}{4}}$ and $$\left\vert\frac{x_c}{y_c}-\theta{\rho}\right\vert<\frac{2M}{y_c^{n}},$$ is bounded above by $o_N(1)\epsilon.$ \\ 

Hence, putting everything together, for every $u\in\left(\prod_{i\geq2}(\rho_i-\rho_1)-\delta,\prod_{i\geq2}(\rho_i-\rho_1)+\delta\right)$, $N$ large, and $\theta\in I_N$, we can find some $T_u\in\SL_2(\R)$ such that \begin{enumerate}
    \item $T_u(\rho_1)=\rho_1$
    \item $\prod_{i\geq2}(\frac{P_N}{Q_N}-\theta T_{u}(\rho_i))=v$
    \item $$\left\vert\frac{x_c}{y_c}-\frac{T_u(\rho_i)}{\rho_1}\frac{P_N}{Q_N}\right\vert>\frac{2M}{(y_cQ_N)^{\frac{5}{4}}},$$ for all $y_c<2MQ_N^{\frac{5}{4}}$ and $i\in\{2,...,k\}$.
    \item $m(P\circ T_{\epsilon}\circ T_{u})\geq m(P\circ T_{\epsilon})-\epsilon\geq m(P)-2\epsilon.$
    \item $T_{u}(\rho_i)\in E^{\eta}(\rho_i).$
    \item  $$\left\vert\frac{x_c}{y_c}-\theta{T_u(\rho_i)}\right\vert>\frac{2M}{y_c^{n}},$$ for $y_c>\frac{1}{2M}Q_N^{\frac{5}{4}}$ and $i\in\{2,3,...,k\}$.
\end{enumerate}

This $T_u$ of course depends on $N$ and $\theta$ as well. This is however understood, and we omit the extra notation. Assume now $\theta$ is such that \begin{align}\left\vert\frac{x_c}{y_c}-\theta{\rho_1}\right\vert>\frac{2M}{y_c^{n}},\label{theta inequality}\end{align} for $y_c>\frac{1}{2M}Q_N^{\frac{5}{4}}$. By Theorem \ref{Main Theorem measure}, the only possibility for $m(P\circ T_{\epsilon}\circ T_u\circ \Delta_{\theta})$ is that $P\circ T_{\epsilon}\circ T_u\circ \Delta_{\theta}$ is minimized by $(P_N,Q_N)$ or some $(X,Y)$ such that \begin{align*}\left\vert\frac{X}{Y}-{T_u(\rho_i})\right\vert<\frac{2M}{Y^{2+\frac{4}{5}}},\end{align*} for some $i\in\{1,2,...,k\}$. Here, we have used again that $M$ varies continuously as a function of $(\rho_1,...,\rho_k).$ However, since $T_{u}(\rho_i)\in E^{\eta}(\rho_i)$ and $\rho_i\in E_2$,  $X,Y$ should be bounded above by an absolute constant uniform over all $u$ and $\theta$. We have shown that for $N$ large enough and $\theta\in I_N$ satisfying (\ref{theta inequality}), we have $$m(P\circ T_{\epsilon}\circ T_u\circ \Delta_{\theta})=P\circ T_{\epsilon}\circ T_u\circ \Delta_{\theta}(P_N,Q_N)=\theta^{-\frac{n}{2}}(\frac{P_N}{Q_N}-\theta\rho_1)u.$$ or $$m(P\circ T_{\epsilon}\circ T_u\circ \Delta_{\theta})=P\circ T_{\epsilon}\circ T_u\circ \Delta_{\theta}(X,Y),$$ where $X,Y$ are bounded by some absolute constant. For $N\rightarrow\infty,$ and $\epsilon\rightarrow 0$ we know that $$P\circ T_{\epsilon}\circ T_u\circ \Delta_{\theta}(X,Y)\rightarrow P(X,Y)\geq m(P),$$ and therefore for any $t_0>0$, if we choose $\theta$ to satisfy $\frac{\theta-\theta_N}{\frac{P_N}{\rho_1Q_N}-\theta_N}\geq t_0,$ we have that for $N$ large enough and $\epsilon$ small enough, $$m(P\circ T_{\epsilon}\circ T_u\circ \Delta_{\theta})=\theta^{-\frac{n}{2}}(\frac{P_N}{Q_N}-\theta\rho_1)u.$$ To sum up, we have constructed an interval in the spectrum of the form $$\left\{\theta^{-\frac{n}{2}}\left\vert(\frac{P_N}{Q_N}-\theta\rho_1)u\right\vert, u\in\left(\prod_{i\geq2}(\rho_i-\rho_1)-\delta,\prod_{i\geq2}(\rho_i-\rho_1)+\delta\right)\right\}.$$ in $\Spec(P)$. It is easy now to see that, since condition (\ref{theta inequality}) is satisfied for all but $o_N(1)$ proportion of $I_N$, these intervals cover all of $(0,M_P)$ by taking $N\rightarrow\infty,$ $\epsilon\rightarrow 0.$ By Lemma \ref{existence of extremal lattice}, we have $M_P\in \Spec(P)$. As for $0,$ we can find some lattice $\Lambda$, such that $P\circ\Lambda$ has a rational root and hence $0$ is also in $\Spec(P).$ This concludes the proof of Theorem \ref{Main Theorem}.
\end{proof}

\section*{Acknowledgements}
I am greatly indebted to my advisor Peter Sarnak for many enlightening discussions and his constant encouragement throughout the execution of this project. I also want to express my gratitude towards Alex Gamburd, Elon Lindenstrauss, Nikolai Moshchevitin, Uri Shapira, Barak Weiss and Nina Zubrilina for interesting conversations in connection to this problem as well as comments and suggestions on earlier drafts of this paper.

\bibliographystyle{unsrtnat}
\bibliography{References}

\begin{thebibliography}{14}
\providecommand{\natexlab}[1]{#1}
\providecommand{\url}[1]{\texttt{#1}}
\expandafter\ifx\csname urlstyle\endcsname\relax
  \providecommand{\doi}[1]{doi: #1}\else
  \providecommand{\doi}{doi: \begingroup \urlstyle{rm}\Url}\fi

\bibitem[Mahler(1946{\natexlab{a}})]{Mahler}
Kurt Mahler.
\newblock On lattice points in n-dimensional star bodies {I}. existence theorems.
\newblock \emph{Proceedings of the Royal Society of London. Series A. Mathematical and Physical Sciences}, 187\penalty0 (1009):\penalty0 151--187, 1946{\natexlab{a}}.

\bibitem[Mahler(1946{\natexlab{b}})]{mahlerII}
Kurt Mahler.
\newblock Lattice points in n-dimensional star bodies. {II}.
\newblock In \emph{Proc. Acad. Amsterdam}, volume~49, pages 624--532, 1946{\natexlab{b}}.

\bibitem[Sarnak(2022-2023)]{Sarnak}
Peter Sarnak.
\newblock {Chern Lectures Berkeley - ``Prescribing the Spectra of Locally Uniform Geometries"}.
\newblock 2022-2023.
\newblock URL \url{https://publications.ias.edu/sarnak/paper/2728}.

\bibitem[Markoff(1879)]{Markov}
Andrey Markoff.
\newblock Sur les formes quadratiques binaires ind{\'e}finies.
\newblock \emph{Mathematische Annalen}, 15\penalty0 (3-4):\penalty0 381--406, 1879.

\bibitem[Hall(1947)]{Hall}
Marshall Hall.
\newblock On the sum and product of continued fractions.
\newblock \emph{Annals of Mathematics}, pages 966--993, 1947.

\bibitem[Freiman(1975)]{Freiman}
GA~Freiman.
\newblock Diofantovy priblizheniya i geometriya chisel (zadacha markova).
\newblock \emph{Diophantine approximations and the geometry of numbers (Markov’s problem)] Kalinin. Gosudarstv. Univ., Kalinin}, 1975.

\bibitem[Mordell(1945)]{Mordell}
L.~J. Mordell.
\newblock {On Numbers Represented by Binary Cubic Forms}.
\newblock \emph{Proceedings of the London Mathematical Society}, s2-48\penalty0 (1):\penalty0 198--228, 01 1945.
\newblock ISSN 0024-6115.
\newblock \doi{10.1112/plms/s2-48.1.198}.

\bibitem[Davenport(1941)]{DavenportonMordell}
H~Davenport.
\newblock On a conjecture of {M}ordell concerning binary cubic forms.
\newblock In \emph{Mathematical Proceedings of the Cambridge Philosophical Society}, volume~37, pages 325--330. Cambridge University Press, 1941.

\bibitem[Cassels(1997)]{CasselsBook}
J.~W.~S. Cassels.
\newblock \emph{An introduction to the geometry of numbers}.
\newblock Classics in Mathematics. Springer-Verlag, Berlin, 1997.
\newblock ISBN 3-540-61788-4.
\newblock Corrected reprint of the 1971 edition.

\bibitem[Cusick and Flahive(1989)]{CusickFlahive}
Thomas~W. Cusick and Mary~E. Flahive.
\newblock \emph{The {M}arkoff and {L}agrange spectra}, volume~30 of \emph{Mathematical Surveys and Monographs}.
\newblock American Mathematical Society, Providence, RI, 1989.
\newblock ISBN 0-8218-1531-8.
\newblock \doi{10.1090/surv/030}.

\bibitem[Khinchin(1997)]{KhinchinTextbook}
A.~Ya. Khinchin.
\newblock \emph{Continued fractions}.
\newblock Dover Publications, Inc., Mineola, NY, russian edition, 1997.
\newblock ISBN 0-486-69630-8.
\newblock With a preface by B. V. Gnedenko, Reprint of the 1964 translation.

\bibitem[Springer(1977)]{InvariantTheory}
T.~A. Springer.
\newblock \emph{Invariant theory}, volume Vol. 585 of \emph{Lecture Notes in Mathematics}.
\newblock Springer-Verlag, Berlin-New York, 1977.

\bibitem[Ra{\l}owski et~al.(2009)Ra{\l}owski, Szczepaniak, and {\.Z}eberski]{Steinhaus}
Robert Ra{\l}owski, Przemys{\l}aw Szczepaniak, and Szymon {\.Z}eberski.
\newblock A generalization of steinhaus' theorem and some nonmeasurable sets.
\newblock 2009.

\bibitem[Erdos and Oxtoby(1955)]{erdos1955partitions}
Paul Erdos and John~C Oxtoby.
\newblock Partitions of the plane into sets having positive measure in every non-null measurable product set.
\newblock \emph{Transactions of the American Mathematical Society}, 79\penalty0 (1):\penalty0 91--102, 1955.

\end{thebibliography}
\end{document}